\theoremstyle{plain}  
  \newtheorem{thm}{Theorem}[section]
  \newtheorem{cor}[thm]{Corollary}   
  \newtheorem{lem}[thm]{Lemma}    
  \newtheorem{prop}[thm]{Proposition}
\theoremstyle{definition}  
 \newtheorem{defn}[thm]{Definition}
\theoremstyle{remark}
  \newtheorem*{rem}{Remark}
  \newtheorem*{ack}{Acknowledgements}
\newcommand{\C}{\mathbb C} 
\newcommand{\Z}{\mathbb Z} 
\newcommand{\N}{\mathbb N}  
\newcommand{\R}{\mathbb R}
\newcommand{\HH}{{\bf H}}
\newcommand{\hC}{\hat \C}
\newcommand{\D}{\mathcal D}
\newcommand{\M}{\mathcal M} 
\newcommand{\B}{\mathcal B}  
\newcommand{\LL}{\mathcal L}
\newcommand{\A}{\mathcal A} 
\newcommand{\hN}{{\hat N}}
\newcommand{\hP}{{\hat P}}
\newcommand{\lam}{\lambda} 
\newcommand{\alp}{\alpha} 
\newcommand{\Lam}{\Lambda} 
\newcommand{\fty}{\infty} 
\newcommand{\ep}{\epsilon}
\newcommand{\bd}{\partial} 
\newcommand{\wh}{\widehat}
\newcommand{\wt}{\widetilde}
\newcommand{\sm}{\setminus}
\newcommand{\la}{\langle}
\newcommand{\ra}{\rangle}
\newcommand{\ze}{\zeta}
\newcommand{\vphi}{\varphi}
\newcommand{\tr}{{\mathrm{tr}}}
\newcommand{\Tr}{{\mathrm{Tr}}}
\newcommand{\Hom}{{\mathrm{Hom}}}
\newcommand{\inte}{\mathrm{int}}
\newcommand{\id}{\mathit{id}}
\newcommand{\im}{\mathrm{Im}} 
\newcommand{\re}{\mathrm{Re}} 
\newcommand{\psl}{{\mathrm{PSL}}(2,\mathbb C)}
\newcommand{\ssl}{{\mathrm{SL}}(2,\mathbb C)}
\newcommand{\mz}{{\mu,\ze}}
\title{Linear slices close to a Maskit slice} 
\author{Kentaro Ito}
\begin{document}

\maketitle

\begin{abstract}
We consider linear slices of the space of Kleinian once-punctured torus groups; 
a linear slice is obtained by fixing the value of the trace of one of the generators.  
The linear slice for trace $2$ is called the Maskit slice. 
We will show that if traces converge `horocyclically' to $2$ 
then associated linear slices 
converge to the Maskit slice, whereas if the traces converge 
`tangentially' to $2$ the linear slices converge to a proper subset of the Maskit slice. 
This result will  be also rephrased in terms of complex Fenchel-Nielsen coordinates. 
In addition, we will show that there is a linear slice which is not  locally connected. 
\end{abstract}

\section{Introduction}

One of the central issues in the theory of Kleinian groups is to understand the structures 
of deformation spaces of Kleinian groups. 
In this paper we consider Kleinian punctured torus groups, 
one of the simplest classes of Kleinian groups with a non-trivial deformation theory. 

Let $S$ be a once-punctured torus and let $R(S)$ be the space of conjugacy classes of representations 
$\rho:\pi_1(S) \to \psl$ which takes a loop surrounding the cusp to a parabolic element. 
The space $AH(S)$ of Kleinian punctured torus groups is the subset of $R(S)$  of 
faithful representations with discrete images. 
Although the interior of $AH(S)$  
is parameterized by a product of Teichm\"{u}ller spaces of $S$, 
its boundary is quite complicated. 
For example, McMullen \cite{Mc2} showed that $AH(S)$ self-bumps,  
and Bromberg \cite{Br} showed 
that $AH(S)$ is not even locally connected. 
We refer the reader to \cite{Ca}  
for more information on the topology of 
deformation spaces of general Kleinian groups. 

In this paper we investigate the shape of $AH(S)$ form the point of view of the trace coordinates. 
Let us fix a pair $a,b$ of generators of $\pi_1(S)$. 
Then every representation $\rho$ in $R(S)$ is essentially 
determined by the data $(\tr \,\rho(a),\tr\,\rho(b))=(\alp,\beta)  \in \C^2$. 
Thus we identify $R(S)$ with $\C^2$ in this introduction (see Section 2 for more accurate treatment). 
We want to understand when $(\alp,\beta) \in \C^2$ corresponds to a point of  $AH(S)$. 
More precisely, we consider in this paper the shape the {\it linear slice}  
$$
\LL(\beta):=\{\alp \in \C : (\alp,\beta) \in AH(S)\}
$$
of $AH(S)$ 
when $\beta$ close to $2$. 
Note that $\LL(2)$ is known as the Maskit slice, corresponding to the set of representations 
$\rho \in AH(S)$ such that $\rho(b)$ is parabolic. 
It is natural to ask the following question:  
``When $\beta$ tends to $2$,  does $\LL(\beta)$ converge to $\LL(2)$?" 
Parker and Parkkonen \cite{PP}  studied this question 
in the case that  a real number $\beta>2$ tends to $2$, 
and obtained an affirmative answer for this case.  
In this paper, we consider the question above in the general case 
that  a complex number $\beta  \in \C \sm [-2,2]$ tends to $2$, 
and obtain the complete answer  to this question. 
In fact, the answer depends on the manner how $\beta$ tends to $2$. 

To describe our results, we need to introduce the notion of complex length.  
Let  $\rho \in R(S)$ and assume that 
$\beta=\tr \rho(b)$ is close to $2$. 
Then the complex length $\lam$ of $\rho(b)$ is determined by the relation 
$
\beta=2\cosh (\lam/2)
$
 and the normalization $\re\,\lam>0,\,\im\,\lam \in (-\pi,\pi]$.
  We denote this $\lam$ by $\lam(\beta)$. 
  Note that $\beta \to 2$ if and only if $\lam(\beta) \to 0$. 
We say that a sequence $\beta_n \in \C \sm[-2,2]$ 
converges {\it horocyclically}  to $2$ if for any disk 
in the right-half plane  $\C_+$ touching at zero,  
$\lam(\beta_n)$ are eventually contained in this disk. 
On the other hand, 
we say that  the sequence $\beta_n$ converges {\it tangentially} to $2$   
if there is a disk in $\C_+$ touching at zero 
which does not contain any $\lam(\beta_n)$.  
Now we can state our main result.  (See Theorems \ref{tan} and \ref{horo} for more precise statements. 
See also Figure \ref{tr_conv}.) 

\begin{thm}
Suppose that a sequence $\beta_n \in \C \sm [-2,2]$ converges to $2$. 
If  $\beta_n \to 2$ horocyclically, then $\LL(\beta_n)$ converge to $\LL(2)$ in the sense of 
Hausdorff. 
On the other hand, if $\beta_n \to 2$ tangentially, 
then $\LL(\beta_n)$ converge (up to subsequence) to a proper subset of $\LL(2)$ in the sense of Hausdorff. 
\end{thm}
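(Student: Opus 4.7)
The plan is to split the theorem into two semicontinuity statements about the set-valued map $\beta \mapsto \LL(\beta)$. The easy inclusion, valid for any sequence $\beta_n \to 2$ in $\C \sm [-2,2]$, is that every Hausdorff subsequential limit of $\LL(\beta_n)$ lies inside $\LL(2)$: if $\alp_n \in \LL(\beta_n)$ converge to $\alp_\fty$, then the associated representations converge in $R(S)$ to one with trace data $(\alp_\fty,2)$, and closedness of $AH(S)$ in $R(S)$ (Chuckrow--J\o rgensen) forces $\alp_\fty \in \LL(2)$. This takes care of one inclusion in both regimes simultaneously.

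For the horocyclic regime I would prove the matching lower semicontinuity: every $\alp_0 \in \LL(2)$ is approximated by some $\alp_n \in \LL(\beta_n)$. The group $\rho_0$ corresponding to $\alp_0$ has a rank-one cusp generated by $\rho_0(b)$, and the idea is to feed $\rho_0$ into the hyperbolic cone-deformation theory of Hodgson--Kerckhoff, Bromberg, and Brock--Bromberg, producing a continuous one-parameter family $\rho_\lam$ of discrete faithful representations with $\rho_\lam(b)$ of complex length $\lam$. The heart of the matter is that the admissible parameter set is a horocyclic neighborhood of $0$, i.e.\ a region containing every disk in $\C_+$ tangent to the imaginary axis at $0$, reflecting the fact that the deformation estimates degenerate only in the $\im\,\lam$ direction. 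Granting this, horocyclic convergence of $\beta_n$ forces $\lam(\beta_n)$ into this neighborhood for large $n$, so $\alp_n := \tr\,\rho_{\lam(\beta_n)}(a)$ lies in $\LL(\beta_n)$ and tends to $\alp_0$.

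For the tangential regime I would produce points of $\LL(2)$ lying outside the Hausdorff limit. The natural candidates are the \emph{cusps} of $\LL(2)$ on its boundary, i.e.\ points $\alp_0$ where a second simple closed curve $c \ne b$ acquires a parabolic image. Near such a cusp, any discrete faithful deformation that turns $b$ into a loxodromic of complex length $\lam$ satisfies an asymptotic linear relation between $\lam$ and the complex length of $c$---essentially the Dehn-surgery slope at the $b$-cusp---which picks out a preferred tangential direction in which $\lam$ must approach $0$. If $\lam(\beta_n)$ approaches $0$ along any other tangential direction, $\alp_0$ cannot be tracked by any nearby $\alp_n \in \LL(\beta_n)$ and drops out of the limit, so the limit is a proper subset of $\LL(2)$.

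The chief obstacle is the horocyclic half: one must show that the cone deformation $\lam \mapsto \rho_\lam$ is defined on \emph{all} of a horocyclic neighborhood of $0$, rather than merely on a sector or smaller disk, and uniformly enough in $\alp_0 \in \LL(2)$ to extract Hausdorff convergence. This requires sharp control on how the deformation radius depends on the underlying geometry, via a quantitative Hodgson--Kerckhoff rigidity estimate or Bromberg's analytic continuation of holonomy; the geometric content of the horocyclic condition must be matched precisely to the quadratic-versus-linear degeneration of these estimates. For the sharper formulations actually proved in the paper, see Theorems \ref{tan} and \ref{horo}.
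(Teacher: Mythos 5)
Your decomposition into upper and lower semicontinuity is the right one, and the upper-semicontinuity step (closedness of $AH(S)$ forces any limit of $\alp_n \in \LL(\beta_n)$ into $\LL(2)$) is exactly the paper's argument for step (H2) in both regimes.

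For the horocyclic lower-semicontinuity, the idea of running cone-deformation/filling from the Maskit point and observing that the parameter set is a horocyclic neighborhood is in the right spirit: it is precisely what is packaged in the paper as Bromberg's coordinate system together with the estimate $|\ze - 4\pi i/\lam(\beta)| \le \ep\,\im\ze$ of Theorem~\ref{estimate}, and the fact that the filling theorem applies whenever the normalized length $|\ze|/\sqrt{2\im\ze}$ is large (so that large $\im\ze$, equivalently horocyclic $\lam\to 0$, always works). But you leave the key quantitative step unproved, and there is a second, unacknowledged gap: Bromberg's local model $\Phi:U\to V$ exists only at $\nu\in\inte(\M)$, so the cone-deformation argument covers interior points of $\LL(2)$ only. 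The paper handles boundary points by approximating them from $\inte(i\M)$ and diagonalizing (this is how (H1) is deduced from Lemma~\ref{horolem}); your proposal would need the same extra step.

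The tangential half is where the proposal genuinely diverges, and where the gap is most serious. You aim to exhibit a cusp $\alp_0\in\bd\LL(2)$ that cannot be tracked, invoking an ``asymptotic linear relation between $\lam$ and the complex length of $c$'' near a double cusp. That relation is not established, and I do not think it is correct as stated: a double cusp has two rank-one cusps, and the discrete deformations nearby form a two-complex-parameter family with no linear constraint between the two complex lengths; the trace of $b$ (hence $\lam$) can be prescribed freely in $\LL(\beta_n)$. Moreover, even if some specific $\alp_0$ dropped out, you would still need to show that a Hausdorff subsequential limit \emph{exists} in order for ``converges to a proper subset'' to have content. The paper's mechanism is different and does both jobs at once: after passing to a subsequence with $2\pi i/\lam(\beta_n)-m_n\to\xi$, Theorem~\ref{cyclic} shows the cyclic groups $\la\rho_n(b)\ra$ converge geometrically to the rank-two parabolic group $\la B,C\ra$ with $C$ determined by $\xi$; J{\o}rgensen--Klein then forces the algebraic limit to lie in the set $i\M(2\xi)=\bigcap_{k\in\Z}(2ik\xi+i\M)$, and the reverse inclusion comes from the same ball estimates (Proposition~\ref{ball1}, Lemma~\ref{tanlem}). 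Properness of $\M(2\xi)\subsetneq\M$ then follows from the additional $\ze$-periodicity of $\M(\ze)$ together with Wright's observation that $\M$ misses the strip $|\im z|\le 1$. So the paper identifies the limit exactly, rather than excluding isolated cusp points, and the properness is a structural consequence rather than a case-by-case elimination.
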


 We now  sketch the essential idea which is underlying this phenomenon.  
 Especially we explain the reason why  
  the limit of linear slices is a proper subset of $\LL(2)$ 
  in the case where $\beta_n \to 2$ tangentially. 
  
  Now suppose that $\beta_n \to 2$ tangentially,  
  and that a sequence $\alp_n \in \LL(\beta_n)$ converges to $\alp \in \C$. 
  We will explain that $\alp$ should lie in a proper subset of $\LL(2)$. 
Let us take a sequence $\rho_n \in AH(S)$ such that 
$(\tr \,\rho_n(a),\tr\,\rho_n(b))=(\alp_n,\beta_n)$. 
 Since $(\alp_n, \beta_n) \to (\alp,2)$ as $n \to \fty$,  
 and since $AH(S)$ is closed, we have $(\alp,2) \in AH(S)$,  and hence $\alp \in \LL(2)$.  
By taking conjugations, we may assume that $\rho_n(a) \to A_\alp$ and 
$\rho_n(b) \to B$ in $\psl$, where 
$$
A_\alp=\left(\begin{array}{cc}
 \alp & -i \\ 
-i & 
0 \end{array}\right) 
\quad \text{and} \quad 
B=\left(\begin{array}{cc}
1  & 2 \\ 
0 & 1 
\end{array}\right). 
$$
In addition, by pass to a subsequence if necessary, we may also assume that the sequence 
$\rho_n(\pi_1(S))$ converges geometrically to 
a Kleinian group $\Gamma$, which contains the algebraic limit $\la A_\alp, B\ra$. 
From the assumption that  $\beta_n \to 2$ tangentially,  one can see that 
the cyclic groups $\la\rho_n(b) \ra$ converge geometrically to rank-$2$ abelian group 
 $\la B, C \ra$,  where $C$ is of the form 
$$
C=\left(\begin{array}{cc}
 1 &  \ze\\ 
0 & 1 
\end{array}\right)
$$
for some $\ze \in \C \sm \R$, see Theorem \ref{cyclic}. 
Therefore the geometric limit $\Gamma$ contains the group $\la A_\alp,B, C\ra$. 
For any given integer $k$, one see from $C^{k} A_\alp=A_{\alp-k i\ze}$ that 
the group $\la A_{\alp-ki\ze}, B\ra$ is a subgroup of the  Kleinian group $\Gamma$. 
Hence the group $\la A_{\alp-ki\ze}, B\ra$ is discrete and thus $\alp -ki\ze \in \LL(2)$. 
Therefore $\alp$ should be contained in the intersection 
$$
\bigcap_{k\in \Z} (k i\ze+\LL(2)),
$$ 
which is a proper subset of $\LL(2)$. 

In the proof of Theorem 1.1, 
we will make an essential use of Bromberg's theory in \cite{Br}. 
In fact, Bromberg obtained in \cite{Br} a coordinate system for representations in $AH(S)$ close to the 
Maskit slice. 
The poof of Theorem 1.1 is then obtained by comparing Bromberg's coordinates and the trace coordinates. 

Some other topics and computer graphics of linear slices  can be fond in 
\cite{Mc2}, \cite{MSW} and \cite{KY}, as well as \cite{PP}.  

This paper is organized as follows;
In section 2, we recall some basic fact about spaces of representations and their subspaces. 
In section 3, we introduce the trace coordinates for  the space $R(S)$ 
of representations  of the once-punctured torus group.  
In section 4, we recall Bromberg's theory in \cite{Br} which gives us a local model of the space $AH(S)$ 
of Kleinian once-punctured torus groups near the Maskit slice. 
In section 5, we consider relation between Bromberg's coordinates and the trace coordinates, 
and obtain an estimate which will be used in the proofs of the main results. 
We will show our main results, Theorems \ref{tan} and \ref{horo},  in section 6. 
We also show that there is a  linear slice 
which is not locally connected. 
In section 7, we translate our main results in terms of the complex Fenchel-Nielsen coordinates. 

The following is the mainstream of this paper,  where the top (resp. bottom) line is corresponding to the 
tangential (resp. horocyclic) convergence:  
\begin{eqnarray*}
\xymatrix{ 
\fbox{\text{Theorem \ref{estimate}}} \ar@{=>} [r] \ar@{=>}[dr]
& \fbox{\text{Proposition \ref{ball1}}} \ar@{=>}[r] 
& \fbox{\text{Lemma \ref{tanlem}}} \ar@{=>}[r] 
& \fbox{\text{Theorem \ref{tan}}} \\ 
& \fbox{\text{Proposition \ref{ball2}}} \ar@{=>}[r] 
& \fbox{\text{Lemma \ref{horolem}}} \ar@{=>}[r] 
& \fbox{\text{Theorem \ref{horo}}}
}
\end{eqnarray*}

\begin{ack}
The author would like to thank Hideki Miyachi for his many helpful discussions. 
He is also grateful to Keita Sakugawa for developing a computer program drawing linear slices, 
which was very helpful to proceed this research.    All computer-generated figures of linear slices 
of this paper are made by this program. 
\end{ack}

\section{Spaces of representations}

In this section, we recall the definitions of spaces we will work with.

Let $(M, P)$ be a paired manifold;  that is,  $M$ is a compact, hyperbolizable  $3$-manifold with boundary 
and $P$ is a disjoint union of tori and annuli in $\bd M$.  Especially, every torus component 
of $\bd M$ is contained in $P$. 
Let 
$$
{\cal R}(M,P):=\Hom_P^\mathrm{irr} (\pi_1(M),\psl)
$$
denote the set of all type-preserving, irreducible representations of $\pi_1(M)$ into $\psl$. 
Here a representation $\rho: \pi_1(M) \to \psl$ is said to be {\it type-preserving} 
if $\rho(\gamma)$ is parabolic or identity for every $\gamma \in \pi_1(P)$. 
The space of representations 
$$
R(M,P):={\cal R}(M,P)/\psl
$$
is the set of all $\psl$-conjugacy classes $[\rho]$ of representations $\rho$ in ${\cal R}(M,P)$. 
We endow this space $R(M,P)$ with the algebraic topology; that is, 
a sequence $[\rho_n]$ converges to $[\rho]$ if there are  
representatives $\rho_n$ in $[\rho_n]$ and $\rho$ in $[\rho]$ such that for every $g \in \pi_1(M)$ 
the sequence $\rho_n(g)$ converges to $\rho(g)$ in $\psl$. 
The conjugacy class $[\rho]$ of a representation $\rho$ is also denoted by $\rho$ 
if there is no confusion. 
We are interested in the topological nature of the space 
$$
AH(M,P):=\{\rho \in R(M,P) : \rho \text{ is faithful, discrete}\}. 
$$
It is known by J{\o}rgensen \cite{Jo} that $AH(M,P)$ is closed in $R(M,P)$. 
Let $MP(M,P)$ denote the subset of $AH(N,P)$ 
consists of representations $\rho$ which are 
minimally parabolic (i.e., $\rho(g)$ is parabolic if and only if $g \in \pi_1(P)$) and geometrically finite. 
It is known by Marden \cite{Mar} and Sullivan \cite{Su} 
 that $MP(N,P)$ is equal to the interior of $AH(N,P)$ as a subset of $R(N,P)$. 
Recently, it was shown by Brock, Canary and Minsky \cite{BCM} that 
the closure of $MP(M,P)$ is equal to $AH(M,P)$. 

In this paper, we only consider the following three paired manifolds 
$$(N,P), \quad (N,P'),  \quad (\hN,\hP)$$ which are constructed as follows (see Figure \ref{paired}): 
\begin{figure}
\begin{center}
\includegraphics[height=4cm, bb=0 0 1319 486]{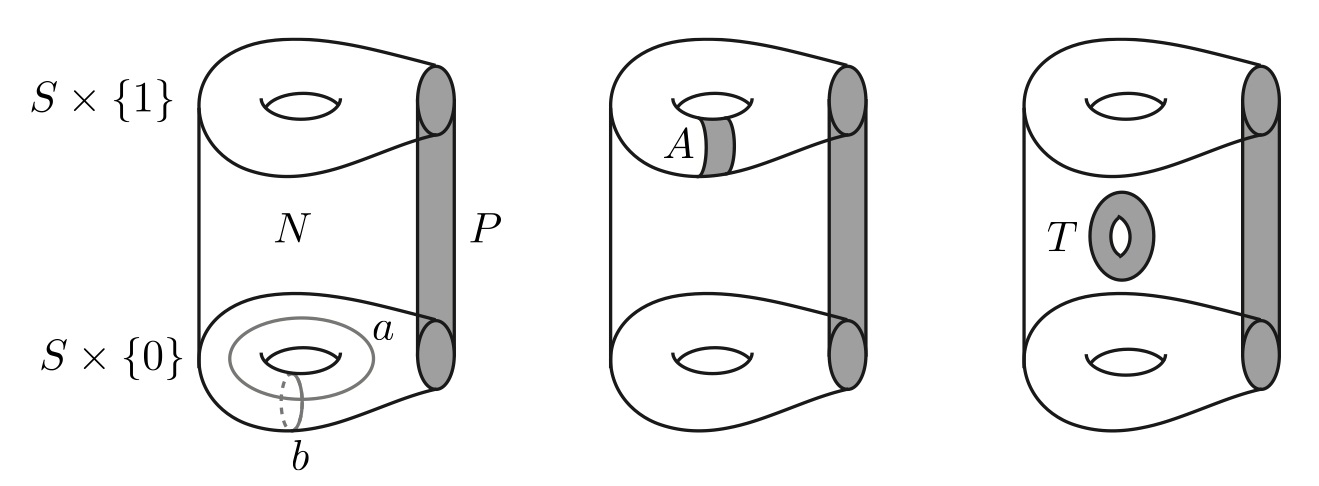}
\caption{ Paired manifolds $(N,P), (N,P')$ and $(\hN,\hP)$ (from left to right). }
\label{paired}
\end{center}
\end{figure}
Let $S$ be a torus with one open disk removed. 
Throughout of this paper, we fix a pair $a,b$ of generators of $\pi_1(S)$ 
such that the geometric intersection number equals one. 
Then the commutator $[a,b]=aba^{-1}b^{-1}$ is homotopic to $\bd S$. 
Now we set 
$$
N:=S \times [0,1]
$$
and 
$$
P:=\bd S \times [0,1].
$$
We next set $P':=P \cup A$, where $A \subset S \times \{1\}$  
is an annulus whose core curve 
is freely homotopic to $b \in \pi_1(S)$. 
Finally, we let 
\begin{eqnarray*}
(\hN,\hP):=(N \sm W, P \cup T), 
\end{eqnarray*}
where $W$ is a regular tubular neighborhood of $b \times \{1/2\}$ in $N=S \times [0,1]$ 
and $T:=\bd W$.

Note that $AH(N,P')$ lies in the boundary of $AH(N,P)$; in fact  
$\rho \in AH(N,P)$ lies in $AH(N,P')$ 
if and only if $\rho(b)$ is parabolic.  
This space $AH(N,P')$ is called the {\it Maskit slice} of $AH(N,P)$.  
It is known by Minsky \cite{Mi} that $AH(N,P')$ has exactly two connected components. 
Bromberg's theory in \cite{Br} gives us an information about the topology of $AH(N,P)$ near $AH(N,P')$. 
The aim of this paper is to understand the topology of $AH(N,P)$ near $AH(N,P')$ 
from the view point of the trace coordinates, which is explained in the next section.

\section{Trace coordinates for $AH(N,P)$}

In this section, we introduce a trace coordinate system 
on a subset of $R(N,P)$ containing $R(N,P')$.   

Recall that $(N,P)=(S \times [0,1],\bd S \times [0,1])$, 
where $S$ is a torus with one open disk removed.   
In this case,  the space $R(N,P)$ 
consists of all $\psl$-conjugacy classes of representations 
$$
\rho: \pi_1(S)=\la a,b \ra \to \psl
$$
which satisfy the condition $\tr (\rho([a,b]))=-2$. 
Note that the trace of the commutator $[a,b]$ is well defined, 
although the traces of $\rho(a)$ and $\rho(b)$ are determined up to sign. 

As we will see below, for any given $(\alp,\beta) \in \C^2$,  
there is  a representation $\rho \in R(N,P)$ which satisfies 
 $\tr^2\rho(a)=\alp^2, \, \tr^2\rho(b)=\beta^2$,  and this $\rho$ is  determined 
uniquely up to pre-composition of automorphism $(a,b) \mapsto (a,b^{-1})$ of $\pi_1(N)$. 
Therefore the subset  
$$
\D_{\tr}:=\{(\alp,\beta) \in \C^2 : \exists \rho  \in AH(N,P) 
\text{ s.t. } \tr^2\rho(a)=\alp^2, \, \tr^2\rho(b)=\beta^2\}
$$ 
of $\C^2$ is well-defined. 
Note that the set $\D_\tr$ is symmetric under the action $(\alp,\beta) \mapsto (\beta,\alp)$. 
For  a given $\beta \in \C$, the slice 
$$
\LL(\beta):=\{\alp \in \C :  (\alp,\beta) \in \D_{\tr}\}.
$$
of $\D_\tr$ is called the {\it linear slice} for $\beta$. 
Note that $\LL(\beta)$ is symmetric under the action of $z \mapsto -z$. 
The aim of this paper is  to understand the shape of $\LL(\beta)$ when $\beta$ is close to $2$. 

To study the shape of linear slices,  
it would be convenient  if we could identify $R(N,P)$ with $\C^2$ simply 
 by $\rho \mapsto (\tr\,\rho(a), \tr\,\rho(b))$. 
But the thing is not so simple. 
One reason is  that traces of $\rho(a), \rho(b)$ are determined up to sign, 
and the other reason is that, for a given $(\alp,\beta) \in \C^2$, 
there exist two candidate of representations $\rho$ which satisfy
$(\tr^2 \rho(a), \tr^2 \rho(b))=(\alp^2,\beta^2)$. 
Therefore, in this section, we will choose  an appropriate open domain $\Omega \subset R(N,P)$ 
so that there exists an embedding 
$\Tr:\Omega \to \C^2$  such that $\Tr(\rho)=(\alp,\beta)$ satisfies 
$(\tr^2 \rho(a), \tr^2 \rho(b))=(\alp^2,\beta^2)$ for every $\rho \in \Omega$.  

We begin by identifying $R(N,P')$ with $\C$. 
For a given $\alp \in \C$, let $\rho_\alp$ be the representation in $R(N,P')$ defined by 
$$
\rho_\alp(a):=
\left(\begin{array}{cc}
\alp  & -i \\ 
-i & 0
 \end{array}\right), \quad 
 \rho_\alp(b):=
 \left(\begin{array}{cc}
 1 & 2 \\ 
 0& 1
  \end{array}\right). 
$$
Then we have the following lemma.  (See Lemma 4.3 in \cite{Br}. 
Note that we are assuming that every element of $R(N,P')$ is irreducible.)

\begin{lem}\label{trace2}
The map $\psi:\C \to R(N,P')$ defined by $\alp \mapsto \rho_\alp$ is a homeomorphism. 
\end{lem}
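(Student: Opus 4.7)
The plan is to exhibit $\psi$ as a continuous bijection with continuous inverse, via an explicit conjugation-normalization together with an inverse formula supplied by Fricke's trace identity. Continuity of $\psi$ is immediate from the polynomial dependence of $\rho_\alpha(a)$ and $\rho_\alpha(b)$ on $\alpha$.

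For surjectivity I would normalize any representative of a given $[\rho]\in R(N,P')$. Since $\rho(b)$ is a nontrivial parabolic in $\psl$, a conjugation brings it to $\begin{pmatrix} 1 & t \\ 0 & 1 \end{pmatrix}$ for some $t\neq 0$; a further diagonal conjugation by $\mathrm{diag}(\lambda,\lambda^{-1})$ with $\lambda^{2}=2/t$ then puts $\rho(b)=B$. Writing the conjugated $\rho(a)=\begin{pmatrix} p & q \\ r & s \end{pmatrix}$, Fricke's identity
$$
\tr[A,B]=(\tr A)^{2}+(\tr B)^{2}+\tr(AB)^{2}-\tr A\cdot\tr B\cdot\tr(AB)-2
$$
combined with $\tr B=2$, $\tr[A,B]=-2$, and $\tr(AB)=\tr A+2r$ reduces to $(\tr A-\tr(AB))^{2}=-4$, forcing $r=\pm i$. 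Replacing the $SL_{2}$-lift of $\rho(a)$ by its negative if needed, I arrange $r=-i$. The $\psl$-centralizer of $B$ consists of the unipotent shifts $U_{\tau}=\begin{pmatrix} 1 & \tau \\ 0 & 1 \end{pmatrix}$; taking $\tau=is$ kills the $(2,2)$-entry of $\rho(a)$, after which $\tr \rho(a)=\alpha$ pins the $(1,1)$-entry as $\alpha$ and $\det=1$ pins the $(1,2)$-entry as $-i$. The normalized representation is exactly $\rho_\alpha$.

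For injectivity and continuity of $\psi^{-1}$, I would recover $\alpha$ as an explicit trace function. In the normalized coordinates one computes $\tr\rho_\alpha(ab)=\alpha-2i$, whence $2(\tr A)^{2}-\tr A\cdot\tr B\cdot\tr(AB)=4i\alpha$. Both terms on the left are invariant under the Klein-$4$ subgroup $V\subset\{\pm 1\}^{3}$ governing admissible $SL_{2}$-lift changes $A\mapsto\pm A$, $B\mapsto\pm B$, so
$$
\alpha\;=\;\frac{2(\tr\rho(a))^{2}-\tr\rho(a)\cdot\tr\rho(b)\cdot\tr\rho(ab)}{4i}
$$
is a well-defined continuous function on $R(N,P')$; evaluating it at $\psi(\alpha')$ returns $\alpha'$, so this formula inverts $\psi$, yielding injectivity of $\psi$ and continuity of $\psi^{-1}$ simultaneously.

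The main technical care is the $\psl$-versus-$SL_{2}$ bookkeeping: the $\pm$ ambiguity of the lift of $\rho(a)$ must be tracked when normalizing $r$ to $-i$, and the $V$-invariance of the displayed formula for $\alpha$ must be verified. The irreducibility hypothesis built into $R(N,P')$ is what guarantees that the $\psl$-stabilizer of the pair $(\rho(a),\rho(b))$ is trivial, so that the normalization above is unique modulo this lift ambiguity.
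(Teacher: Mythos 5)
The paper does not actually prove this lemma; it cites Bromberg [Br, Lemma 4.3] and adds only the remark that elements of $R(N,P')$ are assumed irreducible. Your argument is a correct, self-contained substitute. The normalization you carry out — conjugating $\rho(b)$ to the fixed parabolic $B$, then using the Fricke identity $\tr[A,B]=(\tr A)^2+(\tr B)^2+(\tr AB)^2-\tr A\,\tr B\,\tr AB-2$ together with $\tr[A,B]=-2$, $\tr B=2$ to force $r=\pm i$, and finally using the residual $U_\tau$-freedom to kill the $(2,2)$-entry — is exactly the normalization that makes $\rho_\alp$ a canonical representative; and the sign-invariant trace polynomial $\bigl(2(\tr\rho(a))^2-\tr\rho(a)\,\tr\rho(b)\,\tr\rho(ab)\bigr)/4i$ is a clean way to get injectivity and continuity of $\psi^{-1}$ in one stroke, rather than arguing separately that normalization depends continuously on $[\rho]$. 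Two small points worth stating explicitly: first, the reason $\rho(b)$ is a \emph{nontrivial} parabolic (rather than possibly the identity) is precisely the irreducibility assumption you quote at the end, so that remark belongs at the start of the surjectivity step; second, when you assert the inverting formula is continuous on $R(N,P')$, the underlying fact is that an $\ssl$-lift of a representation of the free group $\pi_1(S)$ can be chosen locally continuously, so that the $V$-invariant expression descends to a continuous function on the $\psl$-character variety — worth a one-line justification since the whole point of the $V$-invariance check is to make this descent legitimate.
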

Note that the map $\psi$ in Lemma \ref{trace2} induces a homeomorphism from $\LL(2)$ onto 
the Maskit slice $AH(N,P')$. 

In the next lemma, we will show that  the homeomorphism $\psi^{-1}:R(N,P') \to \C$ naturally extends to
 an embedding  from an open domain $\Omega \subset R(N,P)$ containing $R(N,P')$ 
into $\C^2$. 

\begin{lem}\label{trace} 
There exist an open,  connected, simply connected domain $\Omega \subset R(N,P)$ 
and a homeomorphism 
$$
{\mathrm{Tr}}:\Omega \to \C^2
$$ 
which satisfy the following: 
\begin{enumerate}
\item 
$\Omega$ contains $R(N,P')$,  
and $\Tr$ takes $R(N,P')$ onto $\C \times \{2\}$. 
In addition, we have  $\Tr(\rho_\alp)=(\alp,2)$ for every $\alp \in \C$. 
\item 
 For every $\rho \in \Omega$, $\Tr(\rho)=(\alp,\beta)$ satisfies  
$\tr^2 \rho(a)=\alp^2$ and $\tr^2 \rho(b)=\beta^2$. 
\end{enumerate}
\end{lem}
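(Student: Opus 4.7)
My plan is to construct the homeomorphism $\Tr$ as the inverse of an explicit section $\sigma:\C^2 \to R(N,P)$ that continues the Maskit-slice parametrization $\psi$ of Lemma \ref{trace2}. The underlying tool is the Fricke--Vogt theorem: on the irreducible locus, a representation $\rho:\pi_1(S)=\la a,b\ra \to \psl$ with $\tr \rho([a,b])=-2$ is determined up to $\psl$-conjugation by the triple of $\ssl$-lifted traces $(\alpha,\beta,\gamma):=(\tr\rho(a),\tr\rho(b),\tr\rho(ab))$, subject to the Fricke identity which here reduces to the Markov relation $\alpha^2+\beta^2+\gamma^2=\alpha\beta\gamma$. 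Specifying $(\alpha,\beta)$ together with a sign of each trace and a compatible branch of $\gamma$ therefore picks out a well-defined conjugacy class.

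The essential input from Lemma \ref{trace2} is that a direct matrix computation gives $\tr\rho_\alpha(ab)=\alpha-2i$, which singles out one of the two roots of the Markov quadratic in $\gamma$ at $\beta=2$; the other root, $\alpha+2i$, is realized by pre-composing $\rho_\alpha$ with the involution $(a,b)\mapsto(a,b^{-1})$. The discriminant of $\gamma^2-\alpha\beta\gamma+(\alpha^2+\beta^2)=0$ is $\alpha^2\beta^2-4(\alpha^2+\beta^2)$, which evaluates to $-16\neq 0$ everywhere along $\C\times\{2\}$, so the branch $\gamma(\alpha,\beta)$ with $\gamma(\alpha,2)=\alpha-2i$ is well-defined in a neighborhood of the Maskit slice. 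I would then exhibit an explicit continuous family $B_{\alpha,\beta}\in\ssl$ with $\tr B_{\alpha,\beta}=\beta$, $B_{\alpha,2}=\rho_\alpha(b)$, and $\tr A_\alpha B_{\alpha,\beta}$ equal to the chosen branch of $\gamma$, and define $\sigma(\alpha,\beta):=[A_\alpha,B_{\alpha,\beta}]\in R(N,P)$.

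Setting $\Omega:=\sigma(\C^2)$, the image is open in $R(N,P)$ by invariance of domain (both are complex $2$-manifolds of the same dimension) and inherits connectedness and simple connectedness from $\C^2$. Injectivity of $\sigma$ follows since the construction recovers $(\alpha,\beta)=(\tr A_\alpha,\tr B_{\alpha,\beta})$ from any representative in the chosen $\psl$-orbit, so $\Tr:=\sigma^{-1}$ is the desired homeomorphism. Property (1) is automatic since $\sigma(\alpha,2)=\rho_\alpha$ by design, and property (2) follows from $\tr A_\alpha=\alpha$ and $\tr B_{\alpha,\beta}=\beta$.

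The main technical difficulty will be globalizing the construction across all of $\C^2$. The discriminant $\alpha^2\beta^2-4(\alpha^2+\beta^2)$ vanishes along a complex curve $V\subset\C^2$ disjoint from $\C\times\{2\}$, and a naive $\gamma$-branch picks up monodromy around $V$. The delicate step is thus to produce $B_{\alpha,\beta}$ by a single explicit formula that avoids extracting $\sqrt{\Delta}$, or equivalently to exhibit $\Omega$ as a concrete open, connected, simply connected subset of $R(N,P)$ on which the signs of $\tr\rho(a),\tr\rho(b)$ and the branch of $\gamma$ can be chosen simultaneously and consistently. Verifying that $\Tr$ maps such $\Omega$ onto all of $\C^2$, rather than a proper open subset, is the aspect I anticipate requiring the most care.
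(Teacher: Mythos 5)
Your approach is conceptually the same as the paper's: lift to $\mathrm{SL}(2,\C)$, use the Fricke/Markov relation to parametrize representations by $(\alpha,\beta,\gamma)$, and cut down to a branch of $\gamma$ that selects $\gamma(\alpha,2)=\alpha-2i$. You also correctly identify the two real difficulties, namely the monodromy of $\gamma$ around the discriminant locus and the four-fold sign ambiguity in passing from $\mathrm{SL}(2,\C)$-traces to $\psl$-conjugacy classes. However, you leave both unresolved, and the first of your two proposed remedies cannot work.

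On the monodromy: a ``single explicit formula for $B_{\alpha,\beta}$ that avoids extracting $\sqrt{\Delta}$'' does not exist. The two roots $\gamma_\pm$ of $\gamma^2-\alpha\beta\gamma+(\alpha^2+\beta^2)=0$ are genuinely exchanged by the monodromy of $\pi_1(\C^2\setminus V)$ around the curve $V=\{\alpha^2\beta^2=4(\alpha^2+\beta^2)\}$, so there is no continuous global section over $\C^2$, nor even over $\C^2\setminus V$. Any construction must restrict to a simply connected subdomain $\Delta$ of the complement of $V$. That is exactly what the paper does: it fixes an open, connected, simply connected $\Delta\subset\C^2\setminus\{(0,0)\}$ containing $\C\times\{2\}$, contained in $\{\re\beta>0,\ \alpha^2\beta^2\neq4(\alpha^2+\beta^2)\}$, and takes the branch of $\gamma$ on $\Delta$ with $\gamma(\alpha,2)=\alpha-2i$. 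Relatedly, the lemma's phrase ``a homeomorphism $\Tr:\Omega\to\C^2$'' is loosely worded: the paper's $\Tr$ is a homeomorphism onto $\Delta$, not onto all of $\C^2$, and your worry about proving surjectivity onto $\C^2$ is well-founded precisely because surjectivity does not hold (nor is it needed for any later use of the lemma).

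On injectivity: your claim that ``injectivity of $\sigma$ follows since the construction recovers $(\alpha,\beta)$ from any representative'' is too quick, because a $\psl$-conjugacy class only determines $\tr^2\rho(a)$ and $\tr^2\rho(b)$, not the signed traces. One must rule out that $(\alpha,\beta)$ and, say, $(-\alpha,\beta)$ give the same $\psl$-class; equivalently, one must show that the $\Z_2\times\Z_2$ deck group of $\pi:\wt R(N,P)\to R(N,P)$ moves $\theta(\Delta)$ off itself. This is the content of the paper's Lemma~\ref{branch}, and the proof uses both the hypothesis $\re\beta>0$ (to kill the $\beta\mapsto-\beta$ flip) and the nonvanishing of the discriminant on $\Delta$ (to kill the $\alpha\mapsto-\alpha$ flip via the value of $\gamma$). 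Your restriction to a well-chosen $\Delta$ is therefore not merely a way to define the $\gamma$-branch, it is essential to the injectivity argument, and without it the proposal is incomplete.
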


Throughout of this paper, we fix such a domain $\Omega$. 
We call $\Tr$ the {\it trace coordinate map} and 
$(\alp,\beta)=\Tr(\rho)$ the {\it trace coordinates} of $\rho \in \Omega$. 
The rest of this section is devoted to the proof of this lemma. 
The commutative diagram (3.2) should be helpful for understanding the arguments. 
The reader may skip this proof by admitting Lemma \ref{trace}. 

To show Lemma \ref{trace}, 
it is convenient to consider the space $\wt R(N,P)$ of 
representations of $\pi_1(N)$ into $\ssl$,  instead of $\psl$.  
More precisely, the set $\wt R(N,P)$ consists of 
$\ssl$-conjugacy classes of representations $\tilde \rho$ of 
$\pi_1(S)$ into $\ssl$ which satisfy the condition $\tr (\tilde\rho([a,b]))=-2$. 
The $\ssl$-conjugacy class of $\tilde \rho$ is also denoted by $\tilde \rho$ 
if there is no confusion. 
It is well known that an element $\tilde \rho$ 
of $\wt R(N,P)$ is uniquely determined by the triple 
$(\tr\tilde \rho(a),\tr\tilde \rho(b),\tr\tilde \rho(ab))$ of complex number 
(see for example \cite{Bo} or \cite{Go}): 

\begin{lem}\label{Bow}
The map 
$$
\wt \Tr: \wt R(N,P) \to \Xi:=\{(\alpha,\beta,\gamma)  \in \C^3 
: \alpha^2+\beta^2+\gamma^2=\alpha\beta\gamma\} \sm \{(0,0,0)\}
$$
defined by  $\tilde\rho \mapsto (\tr\tilde \rho(a),\tr\tilde \rho(b),\tr\tilde \rho(ab))$ 
is a homeomorphism. 
\end{lem}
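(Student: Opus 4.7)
The plan is to assemble the homeomorphism in three stages: image containment, bijectivity, and bicontinuity. This is essentially the classical Fricke--Vogt parametrization of two-generator $\ssl$-character varieties specialized to the parabolic-commutator slice; since the result is standard, as cited in \cite{Bo} and \cite{Go}, I would present the argument in outline, emphasizing the piece tied to our setting.

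First I would check that $\wt\Tr$ lands in $\Xi \sm \{(0,0,0)\}$. The Fricke trace identity
$$\tr([A,B]) = \tr(A)^2 + \tr(B)^2 + \tr(AB)^2 - \tr(A)\tr(B)\tr(AB) - 2$$
holds for any $A, B \in \ssl$, and combined with $\tr\tilde\rho([a,b]) = -2$ it yields $\alp^2 + \beta^2 + \gamma^2 = \alp\beta\gamma$. To exclude the origin, I would observe that if $\alp = \beta = \gamma = 0$, Cayley--Hamilton forces $A^2 = B^2 = (AB)^2 = -I$, whence $\tilde\rho([a,b]) = -I$; such a dihedral-type class lies outside $\wt R(N,P)$ as implicit in its definition as a lift of the irreducible representation space $R(N,P)$.

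For bijectivity, injectivity is the Fricke lemma: an irreducible $\ssl$-representation of the free group $\la a, b\ra$ is determined up to $\ssl$-conjugation by the triple $(\tr A, \tr B, \tr AB)$. For surjectivity, given $(\alp, \beta, \gamma) \in \Xi \sm \{(0,0,0)\}$ I would exhibit explicit matrices: for instance, when $\alp \ne 0$, place $\tilde\rho(a)$ in companion form
$$\tilde\rho(a) = \begin{pmatrix} \alp & -1 \\ 1 & 0 \end{pmatrix}$$
and solve linearly for the entries of $\tilde\rho(b)$ subject to $\det = 1$, $\tr = \beta$, and $\tr\tilde\rho(ab) = \gamma$; symmetric normal forms cover the cases $\beta \ne 0$ or $\gamma \ne 0$, which jointly exhaust $\Xi \sm \{(0,0,0)\}$. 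The Fricke identity then automatically delivers $\tr\tilde\rho([a,b]) = -2$.

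Continuity of $\wt\Tr$ is immediate from polynomiality of trace in matrix entries. For continuity of the inverse, I would use the explicit normal forms as continuous local sections on the three open charts $\{\alp \ne 0\}$, $\{\beta \ne 0\}$, $\{\gamma \ne 0\}$ covering $\Xi \sm \{(0,0,0)\}$, and observe that by the uniqueness from the injectivity step these sections assemble into a globally well-defined continuous map into $\wt R(N,P)$. The main obstacle I foresee is the bookkeeping in the case analysis for surjectivity, in particular verifying that the free parameter in $\tilde\rho(b)$ can always be solved for from the $\tr\tilde\rho(ab) = \gamma$ constraint when the Fricke relation holds; as an alternative bypassing this bookkeeping, one could invoke invariance of domain, since $\wt\Tr$ is then a continuous bijection between $2$-dimensional complex analytic sets.
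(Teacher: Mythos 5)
The paper itself offers no proof of Lemma~\ref{Bow}: it simply cites \cite{Bo} and \cite{Go} for this standard Fricke--Vogt fact, so there is no in-text argument to compare against. Your outline is the classical route and is structurally sound: the Fricke identity $\tr([A,B]) = \alp^2+\beta^2+\gamma^2 - \alp\beta\gamma - 2$ gives image containment, the Fricke uniqueness lemma gives injectivity, companion-form normal sections give surjectivity and a continuous inverse on the charts $\{\alp\ne 0\}$, $\{\beta\ne 0\}$, $\{\gamma\ne 0\}$, and polynomiality gives continuity of $\wt\Tr$.

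The one place where you should slow down is the exclusion of $(0,0,0)$. You assert that the class with $\alp=\beta=\gamma=0$ is a ``dihedral-type class'' lying outside $\wt R(N,P)$ by irreducibility, but that is not quite right: taking $A=\bigl(\begin{smallmatrix} i & 0 \\ 0 & -i\end{smallmatrix}\bigr)$ and $B=\bigl(\begin{smallmatrix} 0 & 1 \\ -1 & 0\end{smallmatrix}\bigr)$, one gets a representation with trace triple $(0,0,0)$, commutator $[A,B]=-I$ (hence $\tr[A,B]=-2$), and \emph{no common eigenvector} --- it is irreducible in the usual sense. So it is not automatically thrown out of $\wt R(N,P)$ by the condition $\tr\tilde\rho([a,b])=-2$ together with irreducibility. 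The exclusion of $(0,0,0)$ is really a normalization the paper is making implicitly: $(0,0,0)$ is the unique fixed point of the $\Z_2\times\Z_2$ deck action $(\alp,\beta,\gamma)\mapsto(\pm\alp,\pm\beta,\pm\gamma)$, so it must be removed for the subsequent claim that $\pi:\wt R(N,P)\to R(N,P)$ is a genuine four-to-one covering. In a careful write-up you should either say explicitly that $\wt R(N,P)$ is taken to exclude this one exceptional (binary-dihedral / quaternion) conjugacy class, or appeal to the stronger convention --- common in this literature, and arguably what ``irreducible'' means for $\psl$-representations here --- that a representation whose image stabilizes an unordered pair of points in $\hC$ (as this one does, swapping $\{0,\infty\}$) is excluded. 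Once that is fixed, your argument goes through, and the invariance-of-domain shortcut you mention at the end is a clean way to finish, since with $(0,0,0)$ removed both sides are smooth complex surfaces.
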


By using this lemma, we often identify  $\wt R(N,P)$ with the subset $\Xi$ of $\C^3$. 
For $(\alpha,\beta) \in \C^2$, the numbers $\gamma$ satisfying 
$\alpha^2+\beta^2+\gamma^2=\alpha\beta\gamma$
are given by
\begin{equation*}
\gamma=\frac{1}{2}\left( \alpha \beta \pm \sqrt{\alpha^2\beta^2-4(\alpha^2+\beta^2)} \right). 
\end{equation*}
Therefore the projection 
$$
\Pi:\Xi \to \C^2 \sm\{(0,0)\}
$$
defined by $(\alp,\beta,\gamma) \mapsto (\alp,\beta)$ 
is a  two-to-one branched covering map. 
If we denote by $\gamma_1,\gamma_2$ 
the solutions of the equation $\alpha^2+\beta^2+\gamma^2=\alpha\beta\gamma$  
on $\gamma$, we have  $\gamma_1+\gamma_2=\alp\beta$. 
On the other hand, we have 
$$
\tr (AB)+\tr(AB^{-1})=\tr A \, \tr B
$$
for every $A,B \in \ssl$.  
Therefore one see that if two representations $\tilde \rho_1, \tilde \rho_2$ in $\wt R(N,P)$ 
have the same image under the map 
$\Pi \circ \wt \Tr$, 
they are only differing by pre-composition of the automorphism $(a,b) \mapsto (a,b^{-1})$ of $\pi_1(S)$. 

Now let 
$$
\pi:\wt R(N,P) \to R(N,P)
$$
be the natural projection, which is  a four-to-one  covering map. 
The group of covering transformation for $\pi$ is isomorphic to 
$\Z_2 \times \Z_2$ which is generated by $(\alp,\beta,\gamma) \mapsto (-\alp,\beta, -\gamma)$ 
and $(\alp,\beta,\gamma) \mapsto  (\alp,-\beta,-\gamma)$, 
where $\wt R(N,P)$ is identified with $\Xi \subset \C^3$ as in Lemma \ref{Bow}. 
 
Now let us take an open, connected and simply connected domain 
$\Delta \subset \C^2 \sm\{(0,0)\}$ 
which satisfy the following: 
\begin{enumerate}
\item $\Delta$ contains the set $\C \times \{2\}$, and 
\item $\Delta$ lies  in the set $\{(\alpha,\beta) \in \C^2 : \re\, \beta>0,\, 
\alpha^2\beta^2\ne 4(\alpha^2+\beta^2)\}$. 
\end{enumerate}
Here, the condition $\alpha^2\beta^2\ne 4(\alpha^2+\beta^2)$ is equivalent to the condition that 
the pair $(\alp,\beta)$ is not a critical value of the projection $\Pi:\Xi \to \C^2 \sm \{(0,0)\}$. 
Throughout of this paper, we fix such a domain $\Delta$. 

Since $\alpha^2\beta^2\ne 4(\alpha^2+\beta^2)$ for every $(\alp,\beta) \in \Delta$, 
and since $\Delta$ is connected and simply connected, one can take a univalent branch of the square root of 
$\alpha^2\beta^2- 4(\alpha^2+\beta^2)$ on $\Delta$. 
We take the  branch such that the value for $(\alp,2) \in \Delta$ is equal to  $-4i$. 
Then we obtain the univalent branch 
of 
\begin{eqnarray}
\gamma=\gamma(\alp,\beta)=\frac{1}{2}\left( \alpha \beta + \sqrt{\alpha^2\beta^2-4(\alpha^2+\beta^2)}\right) 
\end{eqnarray}
on $\Delta$, and hence the univalent branch $\theta: \Delta \to \Xi$ of $\Pi^{-1}$ on $\Delta$. 
\begin{lem}\label{branch}
The map 
$
\pi   \circ \wt \Tr^{-1} \circ \theta:\Delta \to R(N,P)
$
is a homeomorphism onto its image. 
\end{lem}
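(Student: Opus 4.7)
The plan is to verify that $\pi \circ \wt\Tr^{-1} \circ \theta$ is continuous, a local homeomorphism, and injective; then a continuous injective local homeomorphism is automatically a homeomorphism onto its (open) image, which is all that is claimed. Continuity is immediate from continuity of each factor. For the local-homeomorphism property, $\theta$ is a holomorphic section of $\Pi$, defined on $\Delta$ precisely because condition (2) places $\Delta$ in the complement of the critical locus $\alp^2\beta^2 = 4(\alp^2+\beta^2)$; $\wt\Tr^{-1}$ is a homeomorphism by Lemma \ref{Bow}; and $\pi$ is a (four-sheeted) covering map.

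The heart of the argument is injectivity. Suppose $p_i = (\alp_i,\beta_i) \in \Delta$ for $i = 1,2$ map to the same element of $R(N,P)$. Then $\theta(p_1)$ and $\theta(p_2)$ lie in the same fiber of $\pi$, so they differ by a non-trivial element of the deck group $\Z_2 \times \Z_2$, namely one of
\[
(\alp,\beta,\gamma) \mapsto (-\alp,\beta,-\gamma),\quad (\alp,\beta,\gamma) \mapsto (\alp,-\beta,-\gamma),\quad (\alp,\beta,\gamma) \mapsto (-\alp,-\beta,\gamma).
\]
The latter two negate $\beta$ and are therefore excluded immediately by $\re\,\beta_i>0$. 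Only the sign-flip $(\alp,\beta,\gamma) \mapsto (-\alp,\beta,-\gamma)$ remains, for which one must rule out the possibility that $\gamma(-\alp_1,\beta_1) = -\gamma(\alp_1,\beta_1)$ whenever both $(\alp_1,\beta_1)$ and $(-\alp_1,\beta_1)$ lie in $\Delta$.

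For this, note that the two roots $\gamma_\pm$ of $X^2 - \alp\beta X + (\alp^2+\beta^2) = 0$ satisfy $\gamma_+ + \gamma_- = \alp\beta$, so the roots at $(-\alp,\beta)$ are $\{-\gamma_+,-\gamma_-\}$; hence the holomorphic function $h(\alp,\beta) := \gamma(\alp,\beta) + \gamma(-\alp,\beta)$ takes values in the two-element set $\{0,\ \sqrt{\alp^2\beta^2-4(\alp^2+\beta^2)}\}$, where the square root is the fixed univalent branch on $\Delta$. Along the base slice $\C \times \{2\}$, the explicit formulas $\gamma(\alp,2) = \alp - 2i$ and $\gamma(-\alp,2) = -\alp - 2i$ (from the prescribed branch value $-4i$) give
\[
h(\alp,2) = -4i = \sqrt{4\alp^2 - 4(\alp^2+4)}.
\]
Continuity then forces $h = \sqrt{\alp^2\beta^2 - 4(\alp^2+\beta^2)} \ne 0$ throughout the connected component of $\{(\alp,\beta) \in \Delta : (-\alp,\beta) \in \Delta\}$ containing $\C \times \{2\}$; after shrinking $\Delta$, if necessary, so that this symmetric subset is connected, the identity $\gamma(-\alp,\beta) = -\gamma(\alp,\beta)$ is never attained. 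The main obstacle is exactly this branch-compatibility check: it hinges on the specific normalization of the square root at $\C \times \{2\}$ matching the sum computed from the explicit matrices $\rho_\alp(a),\rho_\alp(b)$; once it is in hand, the lemma follows at once from the local-homeomorphism reduction.
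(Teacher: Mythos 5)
Your proof is correct and follows the same outline as the paper's: reduce to the deck transformation $(\alp,\beta,\gamma)\mapsto(-\alp,\beta,-\gamma)$ using $\re\beta>0$, then derive a contradiction from the formula (3.1) for $\gamma$. The one place you diverge is that you explicitly address a branch subtlety that the paper passes over. The paper's computation implicitly assumes the chosen branch of $\sqrt{\alp^2\beta^2-4(\alp^2+\beta^2)}$ on $\Delta$ is an \emph{even} function of $\alp$ (i.e.\ takes the same value at $(\alp,\beta)$ and $(-\alp,\beta)$), from which $\gamma(-\alp,\beta)=-\gamma(\alp,\beta)$ would force the radicand to vanish. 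The radicand is even in $\alp$, but a holomorphic branch need not be a priori; you justify this via the auxiliary function $h(\alp,\beta)=\gamma(\alp,\beta)+\gamma(-\alp,\beta)$, the normalization $\sqrt{\,\cdot\,}=-4i$ on $\C\times\{2\}$, and continuity on the connected symmetric part of $\Delta$. A slightly cleaner packaging of the same point: conditions (1) and (2) defining $\Delta$ are invariant under $\alp\mapsto-\alp$, so one may take $\Delta$ itself symmetric; then the symmetric subset is all of $\Delta$, which is connected by hypothesis, and no shrinking is needed. Your observation that $\pi\circ\wt\Tr^{-1}\circ\theta$ is an open map (via $\pi$ being a covering) is also a correct supplement to the paper's one-line reduction to disjointness of orbits, which quietly uses that openness.
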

\begin{proof}
We only need to show that the orbit of $ \theta(\Delta)$ under the action of $\Z_2 \times \Z_2$  on $\Xi$
are mutually disjoint. 
Take two points $(\alp,\beta), (\alp',\beta') \in \Delta$.  
Suppose for contradiction that 
$(\alp,\beta, \gamma(\alp,\beta)), (\alp',\beta',\gamma(\alp',\beta')) \in\Xi $ are equivalent 
under the action of non-trivial element of the covering transformation group $\Z_2 \times \Z_2$. 
Since $\re\,\beta>0$ and $\re\,\beta'>0$, 
one see that $(\alp',\beta',\gamma(\alp',\beta'))=(-\alp,\beta, -\gamma(\alp,\beta))$. 
Then from (3.1) we have 
\begin{eqnarray*}
\gamma(\alp',\beta')&=&\frac{1}{2}\left( \alpha' \beta' + \sqrt{\alpha'^2\beta'^2-4(\alpha'^2+\beta'^2)}\right) \\
&=&\frac{1}{2}\left( -\alpha \beta + \sqrt{\alpha^2\beta^2-4(\alpha^2+\beta^2)}\right). 
\end{eqnarray*}
But this with $\gamma(\alp',\beta')=-\gamma(\alp,\beta)$ implies $\sqrt{\alpha^2\beta^2-4(\alpha^2+\beta^2)}=0$, 
which contradicts to $(\alp,\beta) \in \Delta$. 
\end{proof}

Now let 
$$\Omega:=\pi \circ \wt \Tr^{-1} \circ \theta(\Delta)$$ and 
$$
\Tr:=\left(\pi \circ \wt \Tr^{-1} \circ \theta\right)^{-1}:\Omega \to \Delta. 
$$
Then we obtain the following commutative diagram: 
\begin{eqnarray}
\xymatrix{ 
\wt R(N,P)  \ar[d]^\pi \ar[r]^{\quad \wt \Tr} &  \Xi 
 \\
 \quad \quad R(N,P) \supset \Omega \ar[r]^{\quad \quad \quad \Tr} & \Delta.  \ar[u]_\theta  
 }
\end{eqnarray}
To show that this $\Omega$ and $\Tr$ satisfy the desired property in Lemma \ref{trace}, 
we only need to show that $\Tr(\rho_\alp)=(\alp,2)$ for every $\alp \in \C$. 
This can be seen from the following two facts:  
(i) If we regard $\rho_\alp=\psi(\alp)$ as an element of $\wt R(N,P)$, 
we have $\wt \Tr(\rho_\alp)=(\alp,2,\alp-2i)$. 
(ii) From our choice of the branch $\theta$, we have $\theta(\alp,2)=(\alp,2 , \alp-2i)$.  
Thus we complete the proof of Lemma \ref{trace}.


\section{Bromberg's coordinates for $AH(N,P)$}

This section is devoted to explain the theory of Bromberg in \cite{Br}, 
which tells us the topology of $AH(N,P)$ near the Maskit slice $AH(N,P')$. 
In fact, Bromberg construct a subset of $\C \times \hat \C$ such that 
$AH(N,P)$ is locally homeomorphic to this set 
at every point in $MP(N,P')$. 

\subsection{The Maskit slice}

Given $\mu \in \C$,  we define a representation $\sigma_\mu  \in R(N,P')$ by 
\begin{eqnarray*}
 \sigma_{\mu}(a):=\left(\begin{array}{cc}
 -i\mu & -i \\ 
-i & 
0 \end{array}\right), \quad 
\sigma_{\mu}(b):=\left(\begin{array}{cc}
  1& 2 \\ 
0 & 1
 \end{array}\right). 
\end{eqnarray*}
This representation $\sigma_\mu$ is nothing but the representation $\rho_\alp$ with $\alp=-i\mu$,  
which is defined in the previous section. 
The subset 
$$
\M:=\{\mu \in \C : \sigma_\mu \in AH(N,P')\} 
$$
of $\C$ is also called the {\it Maskit slice}. 
Since  that the map 
$\C \to R(N,P')$ defined by $\mu \mapsto \sigma_\mu$ is a homeomorphism from Lemma \ref{trace2}, 
$\M$ is homeomorphic to $AH(N,P')$, and 
the interior $\inte(\M)$ of $\M$ is homeomorphic to $MP(N,P')$.  
Since $\mu \in \M$ if and only if $-i\mu \in \LL(2)$, we have  
$$
\LL(2)=i\M=\{i \mu : \mu \in \M\}.
$$
Note that  $\M$ is invariant under the translation $\mu \mapsto \mu+2$. 
We refer the reader to \cite{KS} for basic properties of $\M$. 
It is known by Minsky (Theorem B in \cite{Mi}) that $\M$
has two connected components $\M^+, \,\M^-$, 
where $\M^+$ contained in the upper half-plane and $\M^-$ is the complex conjugation of $\M^+$

\subsection{Coordinates for $AH(\hN,\hP)$}

We now introduce a coordinate system on the space $AH(\hN,\hP)$. 
Recall that  
$\hat N$ is $N$ minus a regular tubular neighborhood $W$ of $b \times \{1/2\}$,  and 
$\hat P$ is a union of $P$ and  $T=\bd W$. 
Bromberg's idea in \cite{Br} is that the space $AH(\hN,\hP)$ can be used as a local model of $AH(N,P)$ 
near a point of $AH(N,P')$. 

The fundamental group of $\hN$ is expressed as 
$$
\pi_1(\hat N)=\la a,b,c : [b,c]=\id\ra, 
$$
where $a,b$ is the pair of generators of the fundamental group of $S \times \{0\}  \subset  \hN$, 
and $c$ is freely homotopic to an essential simple closed curve on $T$ that bounds a disk in $W$. 
We regard $\pi_1(T)=\la b,c \ra$. 
The space $R(\hat N,\hat P)$ of representations for $(\hN,\hP)$ is expressed as 
$$
R(\hat N,\hat P)=\{\rho:\pi_1(\hat N) \to \psl  : \tr \rho([a,b])=-2, \, \tr^2 \rho(c)=4 \}/\psl. 
$$
For a given $(\mu,\zeta) \in \C^2$, we define a representation 
$\hat\sigma_{\mu,\zeta} \in R(\hN,\hP)$ by 
\begin{eqnarray*}
\hat \sigma_{\mu,\zeta}(a):=\sigma_\mu(a), \quad 
\hat \sigma_{\mu,\zeta}(b):=\sigma_\mu(b), \quad 
\hat\sigma_{\mu,\zeta}(c):=\left(\begin{array}{cc}
  1& \zeta \\ 
0 & 1
 \end{array}\right). 
\end{eqnarray*}
Then  we have the following: 
\begin{lem}[Lemma 4.5 in \cite{Br}]
The map $\C^2 \to R(\hN,\hP)$ defined by $(\mu,\zeta) \mapsto \hat \sigma_{\mu,\zeta}$
is a homeomorphism. 
\end{lem}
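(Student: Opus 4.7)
The plan is to prove bijectivity by combining the parametrization of the Maskit slice $\mu \mapsto \sigma_\mu$ from Lemma \ref{trace2} with a description of the centralizer of a parabolic element in $\psl$, then deduce continuity in both directions from the explicit matrix formulas.

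First, I would verify that the map $(\mu,\zeta) \mapsto \hat\sigma_{\mu,\zeta}$ is well-defined. The relation $[b,c]=\id$ holds because $\hat\sigma_{\mu,\zeta}(b)$ and $\hat\sigma_{\mu,\zeta}(c)$ are both upper-triangular unipotent, hence commute; the trace conditions $\tr\hat\sigma_{\mu,\zeta}([a,b])=-2$ and $\tr^2\hat\sigma_{\mu,\zeta}(c)=4$ are inherited from $\sigma_\mu$ and from the shape of $\hat\sigma_{\mu,\zeta}(c)$; and the type-preserving condition is immediate since $\pi_1(\hP)$ is generated by the peripheral curves of $S$ together with $b$ and $c$ on $T$, all of whose images are parabolic or trivial.

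Next, for surjectivity, fix $\rho \in R(\hN,\hP)$. Restricting $\rho$ to $\pi_1(S)=\la a,b\ra \subset \pi_1(\hN)$ yields an element of $R(N,P')$, since $\rho(b)$ is parabolic (or trivial) by the type-preserving condition and $\tr\rho([a,b])=-2$. By Lemma \ref{trace2}, after an initial conjugation we may assume $\rho|_{\la a,b\ra} = \sigma_\mu$ for a unique $\mu \in \C$. The relation $[b,c]=\id$ forces $\rho(c)$ to lie in the centralizer of $\sigma_\mu(b) = \bigl(\begin{smallmatrix} 1 & 2 \\ 0 & 1\end{smallmatrix}\bigr)$; a direct computation shows this centralizer in $\psl$ consists exactly of unipotent translations $\bigl(\begin{smallmatrix} 1 & t \\ 0 & 1\end{smallmatrix}\bigr)$, and combined with $\tr^2\rho(c)=4$ this forces $\rho(c) = \bigl(\begin{smallmatrix} 1 & \zeta \\ 0 & 1\end{smallmatrix}\bigr)$ for a unique $\zeta \in \C$. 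Hence $\rho = \hat\sigma_{\mu,\zeta}$.

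For injectivity, if $g\,\hat\sigma_{\mu_1,\zeta_1}\,g^{-1} = \hat\sigma_{\mu_2,\zeta_2}$ in $\psl$, restricting to $\la a,b\ra$ and invoking Lemma \ref{trace2} gives $\mu_1=\mu_2$, so $g$ must simultaneously centralize $\sigma_\mu(a)$ and $\sigma_\mu(b)$. The centralizer of $\sigma_\mu(b)$ is the one-parameter family above; plugging $g = \bigl(\begin{smallmatrix} 1 & t \\ 0 & 1\end{smallmatrix}\bigr)$ into the commutation condition with the explicit $\sigma_\mu(a) = \bigl(\begin{smallmatrix} -i\mu & -i \\ -i & 0\end{smallmatrix}\bigr)$ yields $t=0$, hence $g = I$ in $\psl$ and $\zeta_1 = \zeta_2$. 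Continuity of the forward map is clear from its polynomial definition; for the inverse, $\mu$ is recovered continuously through the restriction $R(\hN,\hP) \to R(N,P')$ composed with $\psi^{-1}$, and once the unique normalizing conjugation is applied, $\zeta$ is read off as the $(1,2)$ entry of the normalized $\rho(c)$.

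The main delicate point is the continuity of the inverse, specifically ensuring that the normalization conjugating $\rho|_{\la a,b\ra}$ into the form $\sigma_\mu$ can be chosen continuously in $\rho$. This ultimately rests on the triviality of the simultaneous centralizer of $\sigma_\mu(a)$ and $\sigma_\mu(b)$ established during injectivity, which makes the normalization canonical (up to $\pm I$ in $\ssl$, hence trivial in $\psl$), so that the extraction of $\zeta$ is unambiguous and varies continuously with $\rho$.
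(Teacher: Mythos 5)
The paper does not prove this lemma; it simply cites Lemma~4.5 of Bromberg~\cite{Br}, so there is no in-paper proof to compare against. Your argument is the natural one and is essentially correct: the centralizer computation for $\sigma_\mu(b)$ (upper-triangular unipotent) pinning down $\rho(c)$, and the trivial simultaneous centralizer of $\sigma_\mu(a),\sigma_\mu(b)$ giving uniqueness of the normalization, are exactly the right ingredients, and this is the standard way to establish such a parametrization.

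Two small points are worth tightening. First, when you restrict $\rho$ to $\la a,b\ra$ and appeal to Lemma~\ref{trace2}, you implicitly need that restriction to be \emph{irreducible} (the paper explicitly requires irreducibility for elements of $R(N,P')$); this is not automatic from irreducibility of $\rho$ but does follow here: $\tr\rho([a,b])=-2$ rules out $\rho(b)=\mathrm{id}$, so $\rho(b)$ is parabolic with a unique fixed point $p$, and if $\rho(a)$ also fixed $p$ then, since $\rho(c)$ commutes with $\rho(b)$ and hence preserves $\{p\}$, the entire image would fix $p$, contradicting irreducibility of $\rho$. Second, the continuity of the inverse is stated but not really proved; uniqueness of the normalizing conjugation does not by itself yield continuity. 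The clean way to finish is to observe that the normalizing $g_n$ for a convergent sequence of representatives $\rho_n\to\rho$ solves a system of polynomial conditions (sending $\rho_n(b)$ to the fixed unipotent and fitting $\rho_n(a)$ to the prescribed shape), that any accumulation point of $\{g_n\}$ is a normalizer for $\rho$ and hence equals the unique $g$, and that $\{g_n\}$ cannot escape to infinity in $\psl$ because the limiting normalization exists; so $g_n\to g$ and $(\mu_n,\zeta_n)\to(\mu,\zeta)$. With those two gaps closed, the proof is complete.
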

\begin{rem}
Following the rule of notation in \cite{Br}, the representation $\hat \sigma_{\mu,\ze}$ 
should be written as $\sigma_{\mu,\ze}$. But we reserve the notation $\sigma_{\mu,\ze}$ 
for another representation, which will be defined in the next subsection. 
\end{rem}
We define a subset $\B$ of $\C^2$ by 
\begin{eqnarray*}
\B:=\{(\mu,\zeta) \in \C^2 : \hat \sigma_{\mu,\zeta}  \in  AH(\hN,\hP)\}.  
\end{eqnarray*}
Then, by the above lemma, 
 the map 
 $$\B \to AH(\hN,\hP)$$ 
 defined by 
$(\mu,\ze) \mapsto \hat\sigma_{\mu,\ze}$ is a homeomorphism. 
Note that 
 $(\mu,\ze) \in \B$ implies $\mu \in \M$ since 
 the restriction of $\hat \sigma_{\mu,\ze}$ 
 to the subgroup ${\la a, b\ra}$ of $\pi_1(\hN)$ 
 is equal to $\sigma_\mu$. 
 Note also that if $\im\,\ze=0$ then $(\mu,\ze) \not\in\B$; in fact, 
 if $\im\,\ze=0$, it violates discreteness or faithfulness of the representation $\hat \sigma_{\mu,\ze}$.

For any $(\mu,\ze)\in \B$,  the quotient manifold 
$\hat M=\HH^3/\hat  \sigma_{\mu,\ze}(\pi_1(\hN))$ is homeomorphic to the interior of $\hN$, 
and has a rank-2 cusp whose monodromy group is the rank-2 parabolic subgroup 
of $\psl$ generated by 
$\hat  \sigma_{\mu,\ze}(b)$ and $\hat \sigma_{\mu,\ze}(c)$. 
Since 
$$
\hat \sigma_{\mu,\ze}(c^{-k}a)=
\left(\begin{array}{cc}
  -i(\mu-k\ze)& -i \\ 
-i & 0
 \end{array}\right) \quad \text{and} \quad 
\hat  \sigma_{\mu,\ze}(b)=
\left(\begin{array}{cc}
1& 2 \\ 
0 & 1
 \end{array}\right), 
$$
one can see that if  $(\mu,\ze) \in \B$ then $\mu -k\ze \in \M$ for every $k \in \Z$. 
Bromberg showed that the converse is also true if $\im\,\ze \ne 0$ (see Proposition 4.7 in \cite{Br}): 
\begin{thm}[Bromberg]\label{BromLem}
Let $(\mu,\ze) \in \C^2$ with $\im\,\ze \ne 0$. Then $(\mu,\zeta) \in \B$ if and only if 
$\mu -k\ze \in \M$ for every integer $k$. 
\end{thm}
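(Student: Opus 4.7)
The ``only if'' direction is already established in the paragraph preceding the theorem: for $(\mu,\ze) \in \B$, the image subgroup $\la \hat\sigma_{\mu,\ze}(c^{-k}a), \hat\sigma_{\mu,\ze}(b)\ra$ of $G := \hat\sigma_{\mu,\ze}(\pi_1(\hN))$ coincides by direct matrix computation with the image of $\sigma_{\mu-k\ze}$, which is therefore discrete and faithful, giving $\mu-k\ze \in \M$ by the definition of $\M$. The substantive content is the ``if'' direction. So assume $\mu-k\ze \in \M$ for every $k \in \Z$ and $\im\,\ze \ne 0$, and write $A := \hat\sigma_{\mu,\ze}(a)$, $B := \hat\sigma_{\mu,\ze}(b)$, $C := \hat\sigma_{\mu,\ze}(c)$, and $P := \la B, C\ra$. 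Since $\im\,\ze \ne 0$, $P$ is a rank-$2$ parabolic subgroup of $\psl$ fixing $\infty$, and abstractly $\pi_1(\hN) = \la a\ra * \la b, c\ra \cong \Z * \Z^2$.

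My plan is to construct a Ford-type fundamental polyhedron for $G$ based at the cusp $\infty$ and apply the Poincar\'e polyhedron theorem. First I would fix a horoball $H = \{z + iy : y > h\}$ at $\infty$ with $h$ large enough that $H$ is precisely invariant under $P$ in $G$ (via a Shimizu-type bound). Then I would form
$$
D = \{x \in \HH^3 : d(x, H) \le d(x, gH)\ \text{for every}\ g \in G \setminus P\}
$$
intersected with a fundamental parallelogram for $P$. Since $\hat\sigma_{\mu,\ze}(c^{-k}a) = \sigma_{\mu-k\ze}(a)$ and $\hat\sigma_{\mu,\ze}(b) = \sigma_{\mu-k\ze}(b)$, each subgroup $\la C^{-k}A, B\ra \le G$ is a discrete faithful Maskit-slice Kleinian group whose Ford domain relative to the parabolic fixed point $\infty$ is understood. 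The hope is that $P$-translates of these local pieces tile the boundary of $D$, so that the face-pairings of $D$ are realized by $B$, $C$, and the family $\{C^{-k}A : k \in \Z\}$. Verifying Poincar\'e's cycle and completeness conditions would then present $G$ as $\la a, b, c : [b,c] = 1\ra \cong \pi_1(\hN)$, showing $\hat\sigma_{\mu,\ze}$ is discrete and faithful; the parabolic conjugacy classes would then be exactly $P$ together with the class of $\hat\sigma_{\mu,\ze}([a,b])$, which correspond to $\pi_1(\hP)$, giving type-preservingness and hence $(\mu,\ze) \in \B$.

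The chief obstacle is to show that isometric spheres of ``complicated'' elements of $G$ --- words involving both $A$ and $A^{-1}$ with intervening powers of $B$ and $C$ --- do not intrude upon $D$. This is precisely where the hypothesis that \emph{every} $\mu - k\ze$ lies in $\M$, rather than just $\mu$ itself, is essential: the discreteness of each two-generator subgroup $\la C^{-k}A, B\ra$ supplies uniform geometric control on the isometric spheres of its elements, and rank-$2$ parabolicity --- via a second Klein--Maskit combination along the cusp $P$ --- should then assemble these controls into the global statement that deeper words stay outside of $D$. Making the combinatorial side-pairing analysis of this combination precise is the crux of the argument.
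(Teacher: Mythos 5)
The paper does not prove this statement --- it is quoted from Bromberg (Proposition~4.7 in \cite{Br}), and the ``only if'' direction is the two-line matrix observation in the paragraph immediately preceding the theorem, exactly as you note --- so there is no in-paper argument to compare against, and your sketch can only be assessed on its own terms.

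Your handling of ``only if'' is correct. For ``if,'' however, what you have written is a program rather than a proof, and you acknowledge as much: the step you flag as ``the crux'' --- that the isometric spheres of words in $G$ mixing $A^{\pm 1}$ with intervening elements of $P=\la B,C\ra$ are eclipsed by those coming from the groups $\Gamma_k := \la C^{-k}A, B\ra$ and from $P$, so the Ford domain tiles as you hope --- \emph{is} the theorem, and nothing in the outline reduces it. Two concrete obstacles make this more than a routine verification. First, the hypothesis is only $\mu - k\ze \in \M$, not $\mu-k\ze\in\inte(\M)$; some $\Gamma_k$ may therefore be a geometrically infinite (degenerate) punctured-torus group, whose Ford domain has no usable finite combinatorics, so the ``local pieces'' you want to tile with need not be available. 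Second, the Shimizu-type horoball bound and the precise invariance of $H$ under $P$ in $G$ are consequences of discreteness of $G$, which is exactly what is being proved; a Poincar\'{e} argument would have to get by on a~priori estimates coming only from the known discreteness of the individual $\Gamma_k$'s, and it is not explained how such estimates control the isometric spheres of arbitrary words of $G$. Until the tiling claim and these two points are actually argued, the proposal has a genuine gap precisely where the content of the theorem lies.
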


\subsection{Bromberg's coordinates for $AH(N,P)$}

Following \cite{Br}, 
we now introduce a coordinate system on $AH(N,P)$ by using the coordinate system on $AH(\hN,\hP)$ 
introduced in the previous subsection. 

Now let 
$$
\B^+:=\{(\mu,\ze) \in \B: \im\,\ze>0\}
$$
 and define a set 
$\A \subset \C \times \hC$ by 
$$
\A:=\B^+ \cup (\M \times \{\fty\}).
$$
The following theorem due to Bromberg claim that the set $\A$ 
can be used for a local model of $AH(N,P)$ at every point of $MP(N,P') \subset AH(N,P)$. 
 \begin{thm}[Bromberg (Theorem 4.13 in \cite{Br})]\label{BromThm}
 For any $\nu \in \inte(\M)$,  
 there exist a neighborhood $U$ of $(\nu,\fty)$ in $\A$, 
 a neighborhood $V$ of $\sigma_\nu$ in $AH(N,P)$,  and a homeomorphism 
 $
 \Phi:U \to V.
 $
\end{thm}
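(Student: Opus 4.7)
My plan is to construct the homeomorphism $\Phi$ via hyperbolic Dehn surgery on the rank-$2$ torus cusp of $\hN$.  The geometric picture is that for $(\mu,\ze)\in\B^+$ the quotient manifold $\HH^3/\hat\sigma_{\mu,\ze}(\pi_1(\hN))$ carries a rank-$2$ cusp whose meridian $c$ bounds a disk in the drilled solid torus $W$, and filling along this meridian recovers $N$ together with a short closed geodesic representing $b\times\{1/2\}$.  Accordingly I would define $\Phi$ by sending $\M\times\{\fty\}$ to the Maskit slice via $\Phi(\mu,\fty):=\sigma_\mu$, and for finite $\ze$ by setting $\Phi(\mu,\ze)$ equal to the representation of $\pi_1(N)$ obtained from $\hat\sigma_{\mu,\ze}$ by Dehn filling along $c$.

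To make this rigorous I would invoke the effective filling theorem for geometrically finite hyperbolic cone-manifolds (Hodgson--Kerckhoff, as used and refined in \cite{Br}): starting from the geometrically finite cusped structure $\hat\sigma_{\mu,\ze}$ with $(\mu,\ze)\in\B^+$ close to $(\nu,\fty)$, one deforms the cone angle along the added core from $0$ to $2\pi$, obtaining at cone angle $2\pi$ a smooth geometrically finite hyperbolic structure on $N$.  The hypothesis $\nu\in\inte(\M)$ ensures that $\sigma_\nu$ is geometrically finite and minimally parabolic, so the cone-manifold machinery applies uniformly in a neighborhood of $\nu$.  This produces a well-defined map $\Phi$ on a neighborhood $U$ of $(\nu,\fty)$ in $\A$, with image lying in $MP(N,P)$ for points in $\B^+$ and with $\Phi(\mu,\fty)=\sigma_\mu$ by construction.

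The topological properties of $\Phi$ are then verified as follows.  Continuity on $U\cap\B^+$ is a consequence of the analytic dependence of the filled structure on its parameters; continuity at boundary points $(\mu,\fty)$ follows from a geometric-limit argument in which the filled representations converge algebraically to $\sigma_\mu$ as the cone axis is pinched off to a cusp (i.e., as $\ze\to\fty$ and the short geodesic of $\Phi(\mu,\ze)$ becomes parabolic).  Injectivity on a sufficiently small $U$ follows because the short closed geodesic of $\Phi(\mu,\ze)$ (corresponding to $b\times\{1/2\}$) is canonically determined, and drilling it recovers $\hat\sigma_{\mu,\ze}$, hence $(\mu,\ze)$, uniquely; injectivity on $\M\times\{\fty\}$ is immediate from Lemma \ref{trace2}.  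Surjectivity onto a neighborhood $V$ of $\sigma_\nu$ then follows by invariance of domain once one verifies that $\A$ and $AH(N,P)$ have the same real dimension at the respective points.

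The main obstacle is the joint analysis at the degenerate fiber $\ze=\fty$.  The qualitative version of Thurston's hyperbolic Dehn surgery theorem is adequate for the interior of the parameter space but does not by itself provide the uniform control needed to pass to the cusp.  One needs the effective drilling/filling estimates of \cite{Br} to guarantee that every representation in a sufficiently small $AH(N,P)$-neighborhood of $\sigma_\nu$ arises uniquely from our filling construction, thereby ruling out ``hidden'' nearby representations not parameterized by $\A$.  Matching the degeneration of the parameter space $\B^+$ approaching $\M\times\{\fty\}$ with the degeneration of $AH(N,P)$ approaching $AH(N,P')$ is precisely what this quantitative control accomplishes.
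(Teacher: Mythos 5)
Your proposal reconstructs exactly Bromberg's strategy, which is also what the paper does: this theorem is stated as a citation to Theorem 4.13 of \cite{Br}, and Section~4.3 of the paper only sketches the same $c$-filling construction of $\Phi$ that you describe, with $\Phi(\mu,\fty)=\sigma_\mu$ and, for finite $\ze$, $\Phi(\mu,\ze)$ produced via the Hodgson--Kerckhoff--Bromberg drilling/filling theorem applied to $\hat M_{\mu,\ze}$. One caveat on your surjectivity step: invariance of domain is only directly applicable on the open piece $\inte(\B^+)\to MP(N,P)$, since $\A$ is not a manifold near its boundary; the surjectivity of $\Phi$ onto a full $AH(N,P)$-neighborhood of the boundary point $\sigma_\nu$ is instead obtained, as you yourself indicate in your final paragraph, from the quantitative drilling estimates of \cite{Br}, which supply the inverse map (given $\rho$ near $\sigma_\nu$ with $\rho(b)$ loxodromic, drill the short geodesic of $\rho(b)$ to land back in $\B^+$), rather than from a dimension count.
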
 
\begin{rem}
Although Bromberg restricted to the case that $\nu \in \inte(\M^+)$ in \cite{Br},  
it is obvious that the same argument works well for $\nu \in \inte(\M^-)$. 

\end{rem}

In this situation, we say that $(\mu,\ze) \in U$ is  {\it Bromberg's coordinates} of 
the representation 
$\Phi(\mu,\ze) \in V$. 
In what follows, we also write  
$$
\sigma_{\mu,\ze}:=\Phi(\mu,\ze). 
$$

We now briefly explain the definition of the map $\Phi:U \to V, \ (\mu,\ze) \mapsto \sigma_{\mu,\ze}$ 
to what extent we need in the following argument. 
See \cite{Br} for the full details.  
Given $\nu \in \inte(\M)$, a neighborhood $U$ of  $(\nu,\fty)$ in $\A$ 
is chosen sufficiently small so that the following argument works well.  
Let $(\mu,\ze) \in U$. 
If $\ze=\fty$ then $\sigma_{\mu,\fty}
$ is defined to be $\sigma_\mu$. 
If $\ze \ne \fty$, the quotient manifold 
$$
\hat M_{\mu,\ze}=\HH^3/\hat \sigma_{\mu,\ze}(\pi_1(\hN)) 
$$ 
has a rank-$2$ cusp whose monodromy group is  
generated by $\hat \sigma_\mz(b)$ and $\hat \sigma_\mz(c)$.   
Since we are choosing $U$ sufficiently small, it follows from 
the filling theorem due to Hodgson, Kerckhoff and Bromberg (see Theorem 2.5 in \cite{Br}) that 
there exists a  {\it $c$-filling} $M_\mz$ of $\hat M_\mz$ 
for every $(\mu,\ze) \in U$ with $\ze \ne \fty$. 
More precisely,  
there is a complete hyperbolic manifold $M_\mz$ homeomorphic to the interior of $N$ 
and an embedding 
$$
\phi_{\mu,\ze}:\hat M_{\mu,\ze} \to M_{\mu,\ze}
$$
which satisfy the following properties: 
\begin{enumerate}
\item  
the image of $\phi_{\mu,\ze}$ is equals to $M_{\mu,\ze}$ minus 
the geodesic representative of 
$(\phi_{\mu,\ze})_*(\hat \sigma_\mz(b))$, 
\item 
$(\phi_{\mu,\ze})_*(\hat \sigma_\mz(c))$ is trivial in $\pi_1(M_\mz)$, 
and  
\item 
$\phi_\mz$ extends to a conformal map between the conformal boundaries 
of $\hat M_\mz$ and $M_\mz$. 
\end{enumerate}
The map $\phi_{\mu,\ze}$ is called the {\it $c$-filling map}. 
We will define $\sigma_{\mu,\ze}$ to be an element in $AH(N,P)$ associated to $M_{\mu,\ze}$. 
To this end, we need to determine a marking $N \to M_{\mu,\ze}$. 
Since the restriction of the representation $\hat \sigma_{\mu,\ze}$ 
to the subgroup $\la a,b\ra \subset \pi_1(\hN)$ is equal to $\sigma_\mu$,  
the manifold $M_\mu=\HH^3/\sigma_\mu(\pi_1(N))$ covers $\hat M_{\mu,\ze}$. 
The covering map is denoted by 
$$\Pi_{\mu,\ze}:M_\mu \to \hat M_{\mu,\ze}.
$$ 
Let $f_\mu:N \to M_\mu$ be a homotopy equivalence which induces $\sigma_\mu$. 
Then $\sigma_{\mu,\ze}$ 
is defined to be a representation of $\pi_1(N)$ into $\psl$ 
induced form $\phi_{\mu,\ze} \circ \Pi_{\mu,\ze} \circ f_\mu$; 
\begin{eqnarray*}
\begin{CD}
N@>{f_\mu}>{\text{marking}}> M_\mu @>{\Pi_{\mu,\ze}}>{\text{covering}}>
 \hat M_{\mu,\ze} @>{\phi_{\mu,\ze}}>{\text{filling}}> M_{\mu,\ze}. 
\end{CD}
\end{eqnarray*}
This $\sigma_{\mu,\ze}$ is faithful, and hence, is contained in $AH(N,P)$ (see Lemma 3.6 in \cite{Br}).  
Note from the construction of $\sigma_{\mu,\ze}$ that the geodesic in $M_{\mu,\ze}$ 
associated to $\sigma_{\mu,\ze}(a)$ is homotopic to 
the image of the geodesic $\hat M_{\mu,\ze}$ associated to 
$\hat\sigma_{\mu,\ze}(a)$ by $\phi_{\mu,\ze}$.

\section{Relation between the trace coordinates and Bromberg's coordinates}

Let us consider the situation in Theorem \ref{BromThm}.  
We may assume that $V$ is contained in the domain $\Omega$ of the trace coordinate map.  
In this section, we will study the relation between 
Bromberg's coordinates $(\mu,\ze) \in U$ of $\sigma_{\mu,\ze} \in V$ 
and its trace coordinates $(\alp,\beta)=\Tr(\sigma_{\mu,\ze})$. 
More precisely, we will observe in Theorem \ref{estimate} that  
$(\mu, \ze)$ is approximated by $(i\alp, 4\pi i/\lam(\beta))$, 
where $\lam(\beta)=2 \cosh^{-1}(\beta/2)$ 
is the complex length of $\sigma_{\mu,\ze}(b)$.

\subsection{Complex length}

For any  element $g \in \psl$, its {\it complex length} $l(g) \in \C$  is a value which satisfies 
$$
{\tr^2 g}= 4\cosh^2
\left(\frac{l(g)}{2}\right).  
$$
If $g$ is not parabolic, this is equivalent to say that $g$ is conjugate to the M\"{o}bius transformation 
$z \mapsto e^{l(g)}z$. 
For a loxodromic element $g \in \psl$, 
its complex length $l(g)$ determined uniquely 
 if we take it in the set 
$$
\Lam:=\{z \in \C : \re \,z > 0, \, -\pi< \im \,z \le \pi\}. 
$$
In what follows, we always assume that $\l(g) \in \Lam$ for loxodromic transformation $g$.  

We now want to fix one-to-one correspondence  between 
the complex length $l(g)$ of loxodromic element $g \in \ssl$ and its trace $\tr\,g$. 
Note that the map $z \mapsto 2 \cosh(z/2)$ takes the interior of $\Lambda$ 
into the right-half plane  
$$\C_+:=\{z \in \C : \re\,z>0\}.$$
We define a map 
$$
\lam:\C_+ \sm (0,2) \to \Lambda
$$
as its inverse. 
Then we have 
\begin{eqnarray*}
\lam(z) &=&2\cosh^{-1}\left(\frac{z}{2}\right) \\
&=& 2 (z-2)^{1/2}+o(z-2) \quad (z \to 2), 
\end{eqnarray*}
where the real part of a square root is chosen positive. 
We have 
$$\lam(\tr\, g)=l(g)$$ 
for every loxodromic element $g \in \ssl$ with $\tr \,g \in \C_+ \sm (0,2]$.

\subsection{Main estimates}

The following theorem tells us a relation between Bromberg's coordinates and the trace coordinates 
for representations close to the Maskit slice. 

\begin{thm}\label{estimate}
Let $\nu \in \inte(\M)$.  For any $\ep>0$, 
we can choose a neighborhoods $U$ of $(\nu,\fty)$  and 
$V$ of $\sigma_\nu$ in Theorem \ref{BromThm} 
so that they also  satisfy the following: 
$V$ is contained in the domain $\Omega$ of the trace coordinate map, and 
 for any $(\mu,\ze) \in U$ with $\ze  \ne \fty$, we have 
\begin{enumerate}
\item $|\mu-i\alp| \le \ep$, and 
\item $|\ze-4\pi i/\lam(\beta)| \le \ep \, \im\,\ze$, 
\end{enumerate}
where $(\alp,\beta)=\Tr(\sigma_{\mu,\ze})$ is the trace coordinates of $\sigma_{\mu,\ze}$
\end{thm}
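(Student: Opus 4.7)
The plan is to start from a neighborhood pair $(U_0,V_0)$ given by Theorem \ref{BromThm} and shrink it successively to enforce $V\subset\Omega$, estimate (1), and estimate (2). The starting observation is that both estimates hold as equalities on the stratum $\M\times\{\fty\}$: by definition $\sigma_{\mu,\fty}=\sigma_\mu=\rho_{-i\mu}$, so Lemma \ref{trace}(1) gives $\Tr(\sigma_\mu)=(-i\mu,2)$, forcing $\mu-i\alp=0$; and $\lam(\beta)=\lam(2)=0$ matches $\ze=\fty$. Since $\Omega$ is open and $\sigma_\nu\in\Omega$, we may first shrink $U_0$ so that $V_0\subset\Omega$, which makes $\Tr$ available on $V_0$.

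Estimate (1) is then immediate from continuity. The function $(\mu,\ze)\mapsto\mu-i\alp$ is the composition of $\Phi$, $\Tr$ and a linear map, hence continuous on $\A\cap U_0$, and it vanishes identically on $\M\times\{\fty\}$. Restricting to a relatively compact sub-neighborhood of $(\nu,\fty)$ and using uniform continuity produces a further shrinking on which $|\mu-i\alp|\le\ep$.

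Estimate (2) is the heart of the proof and is an instance of the hyperbolic Dehn surgery asymptotic. The geometric picture is as follows. In $\hat M_\mz$, the rank-$2$ cusp has holonomy lattice generated by $b$ (translating by $2$) and $c$ (translating by $\ze$). The $c$-filling of Theorem \ref{BromThm} makes $c$ trivial and promotes $b$ to the core geodesic of the added solid torus, with complex length $L:=\lam(\beta)$, since $\sigma_\mz(b)$ has trace $\pm\beta$. The boundary of a Margulis tube around this core geodesic carries a flat structure whose lattice is $\la L,2\pi i\ra$: the longitude contributes $L$ via $z\mapsto e^L z$, while the meridian contributes the $2\pi$-rotation about the core. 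Matching the conformal moduli of the cusp torus $\la 2,\ze\ra$ and the tube-boundary torus $\la L,2\pi i\ra$ gives $\ze/2\sim 2\pi i/L$, equivalently $\ze\sim 4\pi i/L$; the factor $4\pi i$ rather than $2\pi i$ reflects that $b$ translates by $2$ in our normalization.

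The main obstacle is to convert this heuristic matching into the quantitative bound $|\ze-4\pi i/\lam(\beta)|\le\ep\,\im\,\ze$. For this one appeals to the Hodgson--Kerckhoff filling estimates (Theorem 2.5 of \cite{Br}) used systematically in \cite{Br}: as the tube radius tends to $\fty$ (equivalently $L\to 0$), the difference in moduli between the cusp torus and the tube-boundary torus decays at an explicit polynomial rate in $|L|$, producing a bound of the form $|\ze-4\pi i/L|=o(1/|L|)$. Since $\re L>0$ and $\ze\sim 4\pi i/L$ imply $\im\,\ze$ is of order $1/|L|$ along the relevant approach in $\A$, the ratio $|\ze-4\pi i/L|/\im\,\ze$ tends to $0$, and a final shrinking of $U$ makes it at most $\ep$.
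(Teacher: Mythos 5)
Your plan of attack (shrink Bromberg's neighborhood, then use the bi-Lipschitz control from the filling theorem) matches the paper's overall strategy, and the lattice-matching heuristic $\zeta/2 \sim 2\pi i/\lam(\beta)$ is the correct geometric picture. There are two points to flag, one cosmetic and one substantive.

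For estimate (1) your route is genuinely different and, in fact, simpler: since $\Phi$ and $\Tr$ are continuous and $\mu-i\alpha$ vanishes at $(\nu,\fty)$, plain continuity at the single point $(\nu,\fty)$ suffices (no appeal to uniform continuity on a compact set is needed). The paper instead invokes McMullen's trace estimate (Lemma 3.20 of \cite{Mc1}) through the $(1+\ep_1)$-bi-Lipschitz filling map, which is quantitative overkill for a statement that allows $U$ to depend on $\ep$; your observation that the qualitative version is enough is a real simplification.

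For estimate (2), however, there is a genuine gap. You reason ``$\im\,\ze$ is of order $1/|L|$, so $o(1/|L|)$ gives $o(\im\,\ze)$''. This fails precisely in the tangential regime. From $\ze \approx 4\pi i/L$ one gets $\im\,\ze \approx 4\pi\,\re(1/L)=4\pi\,\re L/|L|^2$, which is comparable to $1/|L|$ only when $\re L$ is comparable to $|L|$. When $L=\lam(\beta)$ approaches $0$ tangentially (near the imaginary axis), $\re L \ll |L|$ and $\im\,\ze$ remains bounded even though $|\ze|\to\fty$; in that case $o(1/|L|)$ is strictly weaker than $o(\im\,\ze)$, so your chain of inequalities does not close. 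The paper handles this by normalizing against the \emph{normalized length} $L(c)=|\ze|/\sqrt{2\,\im\,\ze}$ rather than against $|L|$, and by invoking the explicit bound from Magid (Theorem 1.2 of \cite{Mag}): $|\lam(\beta)-4\pi i/\ze|\le C(\im\,\ze)^2/|\ze|^4$. Multiplying through by $|\ze/\lam(\beta)|$ and using $|\lam(\beta)|\gtrsim 1/|\ze|$ converts this to $|\ze-4\pi i/\lam(\beta)|\le C'(\im\,\ze)^2/|\ze|^2 = (C'/L(c)^2)\,\im\,\ze$, which is $\le\ep\,\im\,\ze$ once $L(c)$ is large. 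The key observation you are missing is that the filling estimate, correctly stated, already carries the factor $(\im\,\ze)^2$ in the numerator, so that after the algebra the bound is automatically of the form ``small constant times $\im\,\ze$'' \emph{regardless} of the direction in which $\ze\to\fty$; appealing only to ``polynomial decay in $|L|$'' throws this structure away.
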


\begin{rem}
These estimates 1 and 2 follow from the fact that 
we can choose the $c$-filling map 
$\phi_{\mu,\ze}:\hat M_{\mu,\ze} \to M_{\mu,\ze}$ close to the isometry 
outside a neighborhood of the rank-$2$ cusp.    
Then the estimates 1 and 2 are obtained from estimates
 due to McMullen (Lemma 3.20 in \cite{Mc1}) and 
  Magid (Theorem 1.2 in \cite{Mag}), respectively.  
\end{rem}

\begin{proof} [Proof of Theorem \ref{estimate}] 
Let us take a neighborhood $U$ of  $(\nu,\fty)$ in $\A$, 
a neighborhood $V$ of $\sigma_{\nu}$ in $AH(N,P)$ 
and a homeomorphism $\Phi:U \to V$ as in the statement of Theorem \ref{BromThm}. 
We may assume that $V \subset \Omega$. 
We will show below that estimates 1 and 2 are obtained 
 if we modify $U$ sufficiently small. 
 
For $(\mu,\ze) \in U$ with $\ze \ne \fty$, 
let 
$$
\phi_{\mu,\ze}:\hat M_{\mu,\ze} \to M_{\mu,\ze}
$$ 
be the $c$-filling map.   
To control the distortion of the map $\phi_{\mu,\ze}$, we need to recall the notion of normalized length. 

Suppose that  $\delta>0$ is less than the Margulis constant for hyperbolic 3-manifolds, and let 
$\mathbb{T}_\delta(T)$ denote the component of $\delta$-thin part of $\hat M_{\mu,\ze}$ 
associated to the rank-2 cusp.  
We endow the boundary $\bd \mathbb{T}_\delta(T)$  of $\mathbb{T}_\delta(T)$ 
with the natural  Euclidean metric. 
The marking map $\hat N \to \hat M_{\mu,\ze}$ 
induces a marking map $T \to \bd \mathbb{T}_\delta(T)$. 
Via this marking, the pair of generators $b,c $ of $\pi_1(T)$ are also regarded as the pair of generators of 
 $\pi_1(\bd \mathbb{T}_\delta(T))$. 
In this setting, the {\it normalized length} $L(c)$ 
of the free homotopy class of $c \subset \bd \mathbb{T}_\delta (T)$  
is defined by 
$$
L(c):=\frac{\mathrm{length}(c')}
{\sqrt{\mathrm{Area}(\bd \mathbb{T}_\delta(T))}}, 
$$
where $\mathrm{length}(c')$ is the Euclidean length of the geodesic representative $c'$ of $c$ in 
$\bd \mathbb{T}_\delta (T)$. 
This $L(c)$ does not depend on the choice of $\delta$. 
Since 
$$
\hat \sigma_{\mu,\ze}(b)=
\left(\begin{array}{cc}
 1 & 2 \\ 
 0& 1
 \end{array}\right) 
 \quad \text{and} \quad 
 \hat \sigma_{\mu,\ze}(c)=\left(\begin{array}{cc}
 1 & \ze \\ 
 0& 1
 \end{array}\right), 
$$
the normalized length $L(c)$ can be calculated concretely as 
$$
L(c)=\frac{|\zeta|}{\sqrt{2 \, \im \,\zeta}}. 
$$

 For any given $K>0$, we can choose the neighborhood $U$ sufficiently small 
 so that  for all $(\mu,\ze) \in U$ with $\ze \ne \fty$,  
 the normalized length $L(c)$ of $c$ at the rank-$2$ cusp of $\hat M_{\mu,\ze}$ is greater than $K$. 
We will show below that if we take such $K$ sufficiently large, the estimates 1 and 2 hold.

We may assume that there is a uniform upper bound of $|\mu|$  for  $(\mu,\ze) \in U$. 
Then, since $\tr^2 \hat \sigma_{\mu,\ze}(a)=-\mu^2$, there is an upper bound 
$R>0$ for hyperbolic lengths of 
geodesic representatives $a^*$ of $\hat \sigma_{\mu,\ze}(a)$ in 
$\hat M_{\mu,\ze}$ for all $(\mu,\ze) \in U$ with $\ze \ne \fty$. 
Therefore we can take $\delta>0$ small enough 
so that  the unit neighborhood ${\cal N}(a^*,1)$ of 
$a^*$ in $\hat M_{\mu,\ze}$ does not intersect the 
$\delta$-thin part $\mathbb{T}_\delta(T) \subset \hat M_{\mu,\ze}$ 
for all $(\mu,\ze) \in U$ with $\ze \ne \fty$. 

It follows from the filling theorem due to Hodgson, Kerckhoff and Bromberg (see Theorem 2.5 in \cite{Br}) that 
for $\delta>0$ chosen as above and for any $\ep_1>0$, there exists $K>0$ such that 
if the normalized length  of $c$  in $\hat M_{\mu,\ze}$ is greater than $K$ 
then the $c$-filling map can be chosen  
so that it restricts to a $(1+\ep_1)$-bi-Lipschitz diffeomorphism 
$$
\phi_{\mu,\ze}: \hat M_{\mu,\ze} \sm \mathbb{T}_\delta(T) 
\to M_{\mu,\ze} \sm \mathbb{T}_\delta(b^*),  
$$
where $\mathbb{T}_\delta(b^*) \subset M_{\mu,\ze}$ is the $\delta$-Margulis tube 
of the geodesic representative $b^*$ of $\sigma_{\mu,\ze}(b)$; i.e., 
the core curve of the filled torus in $M_{\mu,\ze}$. 
We can now apply a theorem of McMullen (Lemma 3.20 in \cite{Mc1}) to obtain 
$$
|\tr^2 (\hat \sigma_{\mu,\ze}(a))-\tr^2 ( \sigma_{\mu,\ze}(a))|<C(R)\ep_1, 
$$
where $C(R)>0$ is a constant which depends only on $R$. 
(Recall that $R$ is the upper bounds of the hyperbolic length of $a^*$.)   
Since $\tr^2\, \hat \sigma_{\mu,\ze}(a)=-\mu^2$, $\tr^2\, \sigma_{\mu,\ze}(a)=\alpha^2$,  
and since $\alp$ is close to $-i\mu$,   
we obtain 
$$
|\mu-i\alp|<\ep
$$
for a  given $\ep>0$ by taking  $K$ large enough. 
Thus we obtain the first estimate. 

We next show the second estimate. 
One can expect to obtain this kind of estimate since 
the Teichm\"{u}ller parameter of the torus 
$\bd  \mathbb{T}_\delta(T) \subset \hat M_{\mu,\ze}$  
with respect to the generators $b,c$ is equal to 
$\ze/2$ and that of  $\bd \mathbb{T}_\delta(b^*) \subset M_{\mu,\ze}$ is equal to 
$2\pi i/\lam(\beta)$,  
and since there is a bi-Lipschitz map of small distortion between these tori. 
Magid accomplish this estimate in \cite{Mag}.  
In fact, by simplifying his estimates (ii) and (iv) of Theorem 1.2 in \cite{Mag}, 
we see that  there is some constant $C>0$ such that  if the normalized length 
$L(c)=|\zeta|/\sqrt{2 \, \im \,\zeta}$ of $c$ is sufficiently large, we have 
\begin{eqnarray}
\left|\lam(\beta)-\frac{4\pi i}{\ze}\right|\le C \frac{(\im\,\ze)^2}{|\ze|^4}=\frac{4C}{L(c)^4}. 
\end{eqnarray} 
One can also see from $L(c)=|\zeta|/\sqrt{2 \, \im \,\zeta}$ that  $\re(4\pi i/\ze)=2\pi/L(c)^2$. 
Combining this with (5.1), we have 
\begin{eqnarray}
|\lam(\beta)|>\frac{1}{2}\left|\frac{4\pi i}{\ze}\right|=\frac{2\pi}{|\ze|}
\end{eqnarray}
for $L(c)$  large enough.  
Finally, multiplying $|\ze/\lam(\beta)|$ on both sides of (5.1) and using the estimate (5.2),  
we obtain 
$$
\left|\ze-\frac{4\pi i}{\lam(\beta)}\right| \le   C' \frac{(\im \ze)^2}{|\ze|^2}=\frac{C'}{2 L(c)^2} \im\,\ze<\ep \, \im\,\ze 
$$
for a given $\ep>0$ 
if $L(c)$ is large enough.  
Thus we obtain the second estimate. 
\end{proof}

\subsection{$\A$ and $\D$} 

Since the shape of $\A$ is well understood from Theorem \ref{BromLem}, 
we can expect to understand the shape of $\D_\tr$ from that of $\A$. 
To apply Theorem \ref{estimate},  it is convenient to consider the image of the set $\D_\tr$ 
by the transformation $(\alp,\beta) \mapsto (i\alp,4\pi i/\lam(\beta))$.  
More precisely, we define a map 
$$
F:\C \times (\C_+ \sm (0,2))\to \C \times \hC
$$ 
by 
$$
F(z,w):=\left(iz, \frac{4\pi i}{\lam(w)}\right) 
$$
and set
$$
\D_\tr^+:=\{(\alp,\beta) \in \D_\tr : \beta \in \C_+\}
$$
and 
$$
\D:=F(\D_\tr^+).
$$
Note that, since we are interested in the shape of $\D_\tr$ where  the second entry is close to $2$, 
we may restrict our attention to $\D_\tr^+$. 
Note also that $F(z,2)=(iz, \fty)$ for every $z$. 

Let us now consider the situation of Theorem \ref{estimate}. 
We define a homeomorphism $\varphi$ from $U$ onto  its domain by 
$\varphi=F \circ   \Tr \circ \Phi$; 
\begin{eqnarray*}
\begin{CD}
\varphi:U@>{\Phi}>> V  @>{\Tr}>> \D_\tr @>{F}>> \D. 
\end{CD}
\end{eqnarray*}
Then by definition we have  $\varphi(\mu,\fty)=(\mu,\fty)$ for any $(\mu,\fty) \in U$. 
It follows from Theorem \ref{estimate} 
the point $\varphi(\mu,\ze)$ is close to $(\mu,\ze) \in U$ even if $\ze \ne \fty$. 
Therefore,  we expect that the shape of $\A$ is similar to  that of $\D$ in a neighborhood of $(\nu,\fty)$ 
for every $\nu \in \inte(\M)$. 

We will justify this expectation  in Propositions \ref{ball1} and \ref{ball2} below.   
In what follows, we denote by $B_\ep(z)$ the $\ep$-neighborhood of $z$ in $\C$, and 
 by  $B_\ep(z,w)$ the $\ep$-neighborhood of $(z,w)$ in $\C^2$.

\begin{prop}\label{ball1}
For any $\nu \in \inte(\M)$, there exists $\ep_0>0$ which satisfy the following: 
For any $0<\ep<\ep_0$ and $I>0$, there exists $K>0$ such that 
for all $z \in \C$ with $|z|>K$ and $0<\im\,z<I$, 
$
B_\ep(\nu,z) \subset \A
$
implies
$
B_{\ep/2}(\nu,z) \subset \D. 
$
\end{prop}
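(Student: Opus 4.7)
The overall strategy is to use Theorem \ref{estimate} to show that the map $\varphi := F \circ \Tr \circ \Phi : U \to \D$ is uniformly close to the identity on the relevant part of $U$, and then to conclude $B_{\ep/2}(\nu,z) \subset \varphi(B_\ep(\nu,z)) \subset \D$ by a topological degree argument. The essential point is that, once $\varphi$ displaces every point of a closed ball by less than $\ep/4$ and the ball lies in the domain of $\varphi$, the image of the ball under the homeomorphism $\varphi$ covers the concentric ball of half the radius.

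For the preparation, I would first fix $\nu \in \inte(\M)$ together with a radius $\ep_0 > 0$ depending only on $\nu$ so small that, when we invoke Theorem \ref{estimate}, the resulting $U$ can be arranged to contain a product region of the form $(B_{\ep_0}(\nu) \times (\{w \in \C : |w| > M\} \cup \{\fty\})) \cap \A$ for some $M > 0$; a reading of the proof of Theorem \ref{estimate} confirms this, since the shrinking of $U$ there is driven by requiring the normalized length $|\ze|/\sqrt{2\,\im\,\ze}$ of $c$ to be large, and only a uniform upper bound on $|\mu|$ is imposed on the first coordinate. Given $\ep < \ep_0$ and $I > 0$, I would then invoke Theorem \ref{estimate} with its tolerance $\ep_1$ chosen so small (as a function of $\ep$ and $I$) that the estimates $|\mu - i\alp| \le \ep_1$ and $|\ze - 4\pi i/\lam(\beta)| \le \ep_1\,\im\,\ze$ force $|\varphi(p) - p| < \ep/4$ in the Euclidean norm on $\C^2$ for every $p = (\mu,\ze) \in U$ with $\im\,\ze < I + \ep$. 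Setting $K := M + \ep$ guarantees that whenever $|z| > K$ and $0 < \im\,z < I$, the Euclidean ball $B_\ep(\nu,z)$ lies in $B_{\ep_0}(\nu) \times \{|w| > M\}$, so the hypothesis $B_\ep(\nu,z) \subset \A$ places $B_\ep(\nu,z)$ inside $U$.

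Finally, for each $q \in B_{\ep/2}(\nu,z)$ I would fix any $\delta \in (0,\ep/4)$ and apply a straight-line homotopy argument on $\bar B_{\ep-\delta}(\nu,z) \subset B_\ep(\nu,z) \subset U$. The map $H(p,t) := (1-t)p + t\,\varphi(p)$ interpolates between the identity and $\varphi$, and for $p$ on the boundary sphere and $t \in [0,1]$,
\begin{equation*}
|H(p,t) - (\nu,z)| \ge (\ep - \delta) - \ep/4 > \ep/2 > |q - (\nu,z)|,
\end{equation*}
so $H$ avoids $q$ on $\bd B_{\ep-\delta}(\nu,z)$. Homotopy invariance of the topological degree in $\C^2 \simeq \R^4$ then gives $\deg(\varphi,B_{\ep-\delta}(\nu,z),q) = \deg(\id,B_{\ep-\delta}(\nu,z),q) = 1$, so some $p \in B_\ep(\nu,z)$ satisfies $\varphi(p) = q$, yielding $q \in \varphi(B_\ep(\nu,z)) \subset \D$. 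I expect the main obstacle to lie not in the degree argument itself but in the preparatory bookkeeping: one must check that a fixed-radius Euclidean ball $B_\ep(\nu,z)$, which does not shrink to $(\nu,\fty)$ in the $\C \times \hC$-topology as $|z| \to \fty$, still fits inside a domain $U$ on which Theorem \ref{estimate} applies, and this is precisely what the product form of $U$ above is designed to handle.
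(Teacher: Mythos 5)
Your proof is correct and follows essentially the same path as the paper: use Theorem \ref{estimate} to show that $\varphi = F\circ\Tr\circ\Phi$ displaces each point of $B_\ep(\nu,z)$ by less than a fixed fraction of $\ep$ (with $K$ chosen depending on $\ep$ and $I$, using that $|\ze|$ large forces the normalized length $|\ze|/\sqrt{2\,\im\,\ze}$ to be large while $\im\,\ze<I+\ep$ keeps the second estimate uniform), and then conclude $B_{\ep/2}(\nu,z)\subset\varphi(B_\ep(\nu,z))\subset\D$. The only difference is the final topological step: you invoke homotopy invariance of the topological degree, whereas the paper runs a direct path-continuation argument along the segment from $\varphi(p)$ to $p$, deriving a contradiction at the first exit time from $\varphi(B_\ep(\nu,z))$; the two are interchangeable and of comparable length.
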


\begin{proof}
For any fixed $\nu \in \inte(\M)$, let us take 
 neighborhoods $U \subset \A$, $W \subset \D$ 
 of $(\nu,\fty)$ such that $\vphi=F \circ \Tr \circ \Phi$ is a homeomorphism from $U$ onto $W$. 
We may assume that $U$ is of the form  
$$
U=\A \cap \{(\mu,\ze) \in \C^2 : |\mu-\nu|<\ep_0, |\ze|>K/2\}
$$
for some $\ep_0>0$ and $K>0$. 
Let us take $0< \ep<\ep_0$ and $I>0$ arbitrarily.   
One can see from Theorem \ref{estimate} and its proof that  if we choose $K$ large enough, 
we may also assume that 
\begin{eqnarray}
d_{\C^2}(\vphi(\mu,\ze),(\mu,\ze))<\frac{\ep}{8}
\end{eqnarray}
holds for every $(\mu,\ze) \in U$ with  $0<\im\,\ze<2I$. 
(Note that if $|\ze| \to \fty$ then $|\ze|/\sqrt{2 \im\,\ze} \to \fty$.)

Now let us take $z \in \C$ with $|z|>K$ and $0<\im\,z<I$, 
and suppose that $B_\ep(\nu,z) \subset \A$.  
Then  $B_\ep(\nu,z) \subset U$ and $0<\im\,\ze<2I$ for every 
$(\mu,\ze) \in B_\ep(\nu,z)$  
(since $K$ and $I$ are larger than $\ep$). 
Thus the inequality (5.3) holds for every $(\mu,\ze) \in B_{\ep}(\nu,z)$. 
Using this fact, we will show that 
$$
B_{\ep/2}(\nu,z) \subset \varphi(B_\ep(\nu,z)),  
$$
which implies that $B_{\ep/2}(\nu,z) \subset \D$. 

Suppose for contradiction that there exists some 
$p \in B_{\ep/2}(\nu,z) \sm  \varphi(B_\ep(\nu,z))$. 
Let consider a line segment 
$$
\gamma(t):=(1-t)\varphi(p) +t p, \quad t \in [0,1]
$$
in $\C^2$ which joins $\varphi(p)$ to $p$. 
Since $d_{\C^2}(\vphi(p),p)<\ep/8$ and $p \in B_{\ep/2}(\nu,z)$,  we have 
$\gamma([0,1]) \subset B_{5\ep/8}(\nu,z)$.  
Now let 
$$
t_\fty:=\inf\{t : \gamma(t)  \not\in \varphi(B_\ep(\nu,z))\}. 
$$
Since $\varphi(p)$ lies in $\varphi(B_\ep(\nu,z))$ but $p$ does not,  
and since $\varphi(B_\ep(\nu,z))$ is open, one see that $0<t_\fty \le 1$. 
Let take an increasing sequence $t_n \to t_\fty \, (n \to \fty)$ 
and let $q_n:=\varphi^{-1}(\gamma(t_n)) \in B_\ep(\nu,z)$. 
Since $\varphi(q_n)(=\gamma(t_n))$ lie in $B_{5\ep/8}(\nu,z)$ and 
$d_{\C^2}(\varphi(q_n),q_n)<\ep/8$, we have 
$q_n \in B_{3\ep/4}(\nu,z)$ for all $n$. 
Therefore an accumulation point $q_\fty$ of $\{q_n\}$ 
lies in $B_{\ep}(\nu,z)$. 
It follows form the continuity of $\varphi$ that $\varphi(q_\fty)=\gamma(t_\fty)$. 
Since $\varphi$ is local homeomorphism at $q_\fty$, 
this contradicts the definition of $t_\fty$. 
Thus we obtain $B_{\ep/2}(\nu,z) \subset \varphi(B_\ep(\nu,z)) \subset \D$. 
 \end{proof}

We set 
$$
B_{\ep,I}(\nu):=B_\ep(\nu) \times \{ z \in \C: \im \,z>I\}. 
$$

\begin{prop}\label{ball2}
For any $\nu \in \inte(\M)$, there exists $\ep_0>0$ which satisfy the following: 
For any $0<\ep<\ep_0$ there is $I>0$ such that 
$
B_{\ep,I}(\nu) \subset \A
$
implies 
$
 B_{\ep/2,2I}(\nu)  \subset \D. 
$
\end{prop}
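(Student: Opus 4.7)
The plan is to follow the scheme of Proposition \ref{ball1}, adjusted for two differences: the relevant region here is the product $B_{\ep,I}(\nu) = B_\ep(\nu) \times \{\im z > I\}$ rather than a Euclidean ball, and the second distortion estimate in Theorem \ref{estimate} is multiplicative in $\im \ze$ rather than absolute. First I would fix $\nu \in \inte(\M)$ and apply Theorem \ref{BromThm} to obtain a neighborhood $U$ of $(\nu,\fty)$ in $\A$ of the form $\A \cap (B_{\ep_0}(\nu) \times (\{|z| > K_0\} \cup \{\fty\}))$ together with the homeomorphism $\Phi:U \to V$, with $V \subset \Omega$. Setting $\varphi := F \circ \Tr \circ \Phi$ and being given $0 < \ep < \ep_0$, put $\ep_1 := \min(\ep/8, 1/8)$ and invoke Theorem \ref{estimate} to choose $I \ge K_0$ large enough that, for every $(\mu,\ze) \in U$ with $\im\ze > I$ and $(\mu^*, \ze^*) := \varphi(\mu,\ze)$,
\begin{equation*}
|\mu^* - \mu| \le \ep_1, \qquad |\ze^* - \ze| \le \ep_1 \im \ze.
\end{equation*}
This is legitimate because the normalized length $L(c) = |\ze|/\sqrt{2\im\ze} \ge \sqrt{\im\ze/2}$ tends to infinity with $I$.

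Under the hypothesis $B_{\ep,I}(\nu) \subset \A$ (hence $\subset U$), I would prove the stronger inclusion $B_{\ep/2,2I}(\nu) \subset \varphi(B_{\ep,I}(\nu))$, which immediately yields $B_{\ep/2,2I}(\nu) \subset \D$. Suppose for contradiction $p = (\mu',\ze') \in B_{\ep/2,2I}(\nu) \setminus \varphi(B_{\ep,I}(\nu))$. The estimates above give $|\mu'' - \nu| < 5\ep/8$ and $\im\ze'' \ge (1-\ep_1)\im\ze' > 7I/4$ for $\varphi(p) = (\mu'',\ze'')$. Form the line segment $\gamma(t) := (1-t)\varphi(p) + tp$ in $\C^2$; by linear interpolation $|\gamma_1(t)-\nu| < 5\ep/8$ and $\im\gamma_2(t) > 7I/4$ for all $t$, so $\gamma([0,1]) \subset B_{\ep,I}(\nu)$. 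Define $t_\fty := \inf\{t \in [0,1] : \gamma(t) \notin \varphi(B_{\ep,I}(\nu))\} \in (0,1]$, choose $t_n \uparrow t_\fty$, and set $q_n := \varphi^{-1}(\gamma(t_n)) \in B_{\ep,I}(\nu)$. Using the distortion estimates in both directions gives $|q_n^{(1)} - \nu| < 3\ep/4$ and $14I/9 < \im q_n^{(2)} < (8/7)\max(\im\ze', \im\ze'')$, so $\{q_n\}$ lies in a compact subset of $B_{\ep,I}(\nu)$. A convergent subsequence has limit $q_\fty \in B_{\ep,I}(\nu)$ satisfying $\varphi(q_\fty) = \gamma(t_\fty)$ by continuity, and the local homeomorphism property of $\varphi$ at $q_\fty$ then contradicts the definition of $t_\fty$ exactly as in the proof of Proposition \ref{ball1}.

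The main subtlety compared to Proposition \ref{ball1} is the multiplicative nature of the distortion in the $\ze$-coordinate: $|\ze^* - \ze|$ is not bounded in absolute terms on $B_{\ep,I}(\nu)$, so one must work throughout with ratios $|\ze^* - \ze|/\im\ze$ and verify both an upper and a lower bound on $\im q_n^{(2)}$ — the lower bound to keep $q_n$ away from the boundary $\im z = I$, and the upper bound to prevent escape toward $\fty$ — in order that $\{q_n\}$ remain in a compact subset of the open set $B_{\ep,I}(\nu)$. The choice $\ep_1 \le 1/8$ (rather than merely $\ep_1 \le \ep/8$) is precisely what makes constants like $7/8$ and $14/9$ beat $1/2$ and $1$ respectively.
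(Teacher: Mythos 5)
Your proposal is correct and follows essentially the same route as the paper, which itself describes the argument as ``almost parallel to that of Proposition~\ref{ball1}'' and leaves the details to the reader; you have supplied precisely those details, correctly identifying that the multiplicative distortion bound $|\ze'-\ze|\le\ep_1\im\ze$ forces one to control $\im q_n^{(2)}$ from both above and below (and, implicitly via the same bound together with the boundedness of the segment $\gamma$, the modulus $|q_n^{(2)}|$) in order to trap $\{q_n\}$ in a compact subset of $B_{\ep,I}(\nu)$.
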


\begin{proof}
The proof is almost parallel to that of Proposition \ref{ball1}. 
For any fixed $\nu \in \inte(\M)$, 
let us consider the homeomorphism $\varphi:U \to W$ as in the proof of Proposition \ref{ball1}. 
One can see from Theorem \ref{BromLem} that 
there exist $\ep_0>0$ and $I>0$ such that $B_{\ep_0,I}(\nu) \subset U$. 
Now let us take $0<\ep<\ep_0$ arbitrarily. 
Theorem \ref{estimate} implies that if we choose $I$ large enough, 
we have the following: 
for any $(\mu,\ze) \in B_{\ep,I}(\nu)$,  $(\mu',\ze'):=\varphi(\mu,\ze)$ satisfies 
$|\mu'-\mu|<\ep/8$ and $|\ze'-\ze|< (\ep/8) \,\im\,\ze$. 
Using this fact, we can show  that 
$$
B_{\ep/2,2I}(\nu) \subset \varphi(B_{\ep,I}(\nu)),  
$$
which implies that $B_{\ep/2,2I}(\nu) \subset \D$. 
The remaining  argument is almost the same to that of Proposition \ref{ball1}, so we leave it for the reader. 
\end{proof}

\section{Main Results}

In this section, we will show our main results, Theorems \ref{tan} and \ref{horo}. 
More precisely, 
for a given sequence $\beta_n \in \C \sm [-2,2]$ converging to $2$, 
we consider the Hausdorff limit of the linear slices $\LL(\beta_n)$ and the 
Carath\'{e}odory limit of the interiors $\inte(\LL(\beta_n))$ of the linear slices. 

\subsection{Horizontal slices of $\A$}

We first consider horizontal slices of $\A$, which will appear as limits of linear slices. 
Let  $\M(\zeta)$ denote the slice of $\A$ by fixing the second entry 
$\zeta \in \C \cup \{\fty\}$ 
in the product structure; that is, 
$$
\M(\ze):=\{\mu \in \C : (\mu,\zeta) \in \A\}. 
$$
By definition of $\A$, one see that (i) $\M(\ze)$ lies in $\M$ for every $\ze$, 
(ii) $\M(\ze)$ is empty  if $\im\,\ze \le 0$, and that  (iii) $\M(\fty)=\M$. 
It follows from Theorem \ref{BromLem} that  if  $\im\,\ze>0$ the set $\M(\ze)$ can be written as 
\begin{eqnarray}
\M(\zeta)= \bigcap_{k \in \Z}(k\zeta+\M),
\end{eqnarray}
where $k\ze+\M=\{k\ze+\mu : \mu \in \M\}$. 
(Note that (6.1) does not hold if $\im\,\ze \le 0$.) 
Note that  $\M(\ze)$ is invariant under the action of $\la z+2,z+\ze \ra$. 
It is known by Wright \cite{Wr} that the stripe $\{z \in \C : -1 \le \im\,z \le 1\}$ does not intersect $\M$. 
Therefore one see that $\M(\ze)=\emptyset$ if $0<\im\,\ze \le2$. 

We now consider relationship between  horizontal slices of $\A$ and linear slices, or horizontal slices of $\D_\tr$. 
By definition, we have 
\begin{eqnarray*}
\alp \in \LL(\beta) \ \iff \  (\alp,\beta) \in \D_\tr \ \iff \   
\left(i\alp,\frac{4\pi i}{\lam(\beta)}\right) \in  \D 
\end{eqnarray*}
and 
\begin{eqnarray*}
   \left(i\alp,\frac{4\pi i}{\lam(\beta)}\right) \in \A \ 
    \iff \ \alpha \in i  \M\left(\frac{4\pi i}{\lam(\beta)}\right). 
\end{eqnarray*}
Recall from Theorem \ref{estimate} that 
$(\mu,\ze) \in \A$ is almost equivalent to  $(\mu,\ze) \in \D$ 
if  $\mu$ lies in $\inte(\M)$ and $|\ze|$ is large enough. 
Therefore we may expect that $\LL(\beta)$ is similar to $i\M({4\pi i/\lam(\beta)})$ 
when $\beta$ is close to $2$.  
We will justify this observation below. 
To this end, we first recall the definitions of Hausdorff convergence and Carath\'{e}odory  convergence. 

\begin{defn}[Hausdorff convergence]
Let 
$F_n \ (n \in \N),\,F_\fty$ be closed subsets in $\C$.  
We say that the sequence $F_n$ converges $F_\fty$ in the sense of Hausdorff 
if the following two conditions are satisfied: 
\begin{enumerate}
\item For any $x_\fty \in F_\fty$, there is a sequence $x_n \in F_n$ 
such that $x_n \to x_\fty$. 
\item If there is a sequence 
$x_{n_j} \in F_{n_j}$ such that $x_{n_j} \to x_\fty$, then  $x_\fty \in F_\fty$. 
\end{enumerate}
\end{defn}

\begin{defn}[Carath\'{e}odory convergence]
Let $\Omega_n \ (n \in \N),\,\Omega_\fty$ be open subsets in $\C$. 
We say that the sequence $\Omega_n$ converges to $\Omega_\fty$ 
in the sense of Carath\'{e}odory if the following two conditions are satisfied: 
\begin{enumerate}
\item For any compact subset $X$ of $\Omega_\fty$, 
$X \subset \Omega_n$ for all large $n$. 
\item If there is an open subset $O$  of $\C$ and an infinite sequence $\{n_j\}_{j=1}^\fty$ 
such that $O \subset \Omega_{n_j}$, then $O \subset \Omega_\fty$. 
\end{enumerate}
\end{defn}

Note that closed subsets $F_n \subset \C$ converge to $F_\fty \subset \C$ in the sense of Hausdorff 
if and only of their complements $\C \sm F_n$  converge to 
$\C \sm F_\fty$ in the sense of Carath\'{e}odory. 

The next lemma implies that  
$\M(\ze_n)$ converge to $\M$ 
if and only if $\im\,\ze_n \to \infty$, which is a direct consequence of (6.1): 

\begin{lem} 
Suppose that a sequence $\{\ze_n\}_{n=1}^\fty$  in $\C$ with $\im\,\ze_n>0$ converges to $\fty$ in $\hC$. 
Then the followings are equivalent: 
\begin{enumerate}
\item $\im\,\ze_n \to \fty$ as $n \to \fty$. 
\item $\M({\ze_n})$converge to $\M$ in the sense of Hausdorff as $n \to \fty$. 
\item $\inte(\M({\ze_n}))$ converge to $\inte(\M)$ in the sense of Carath\'{e}odory as $n \to \fty$. 
\end{enumerate}
\end{lem}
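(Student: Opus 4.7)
The plan is to prove the four implications (1)$\Rightarrow$(2), (1)$\Rightarrow$(3), (2)$\Rightarrow$(1), and (3)$\Rightarrow$(1), which together give the three-way equivalence.  The ingredients are formula (6.1), the fact that $\M$ is closed with $\inte(\M)$ dense in $\M$, Wright's theorem that the strip $\{z\in\C:|\im z|\le 1\}$ is disjoint from $\M$, and the classical fact (arising from Klein--Maskit combination when the loxodromic generator is very long) that there exists $h_0>0$ with $\{z\in\C:|\im z|>h_0\}\subset\inte(\M)$.

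For (1)$\Rightarrow$(2) and (1)$\Rightarrow$(3), I assume $\im\,\ze_n\to\fty$.  The key observation is: for every $\mu\in\inte(\M)$, once $\im\,\ze_n>h_0+|\im\,\mu|$, one has $|\im(\mu-k\ze_n)|\ge |k|\,\im\,\ze_n-|\im\,\mu|>h_0$ for all $k\in\Z\sm\{0\}$, hence $\mu-k\ze_n\in\inte(\M)$.  Combined with $\mu\in\M$ (the $k=0$ case), formula (6.1) gives $\mu\in\M(\ze_n)$.  Density of $\inte(\M)$ in $\M$ together with a diagonal argument then supplies the first condition of Hausdorff convergence, while applying the estimate uniformly to a compact $X\subset\inte(\M)$ and slightly enlarging it within $\inte(\M)$ supplies the first Carath\'{e}odory condition.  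The complementary conditions in both definitions are automatic from $\inte(\M(\ze_n))\subset\M(\ze_n)\subset\M$ and the closedness of $\M$.

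For the converses, I argue by contrapositive: suppose $\im\,\ze_n\not\to\fty$ and pass to a subsequence with $\im\,\ze_{n_j}\le Y$.  If $\im\,\ze_{n_j}\le 2$ along a further subsequence, then $\M(\ze_{n_j})=\emptyset$ there (the translates of Wright's strip by $k\ze_{n_j}$ already tile $\C$ in the imaginary direction), which instantly destroys both convergences.  Otherwise $\im\,\ze_{n_j}\to L\in(2,Y]$, and $|\re\,\ze_{n_j}|\to\fty$ since $|\ze_{n_j}|\to\fty$.  Fix a small $\ep\in(0,1)$ and an integer $m\ge 0$ with $L-1+\ep+mL>h_0$, and choose $\mu_0\in\inte(\M)$ with $\im\,\mu_0=L-1+\ep+mL$; then $\im\bigl(\mu_0-(m+1)\ze_{n_j}\bigr)\to -1+\ep\in(-1,0)$, so for $j$ large $\mu_0-(m+1)\ze_{n_j}$ lies in Wright's strip and hence outside $\M$.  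Thus $\mu_0\notin\M(\ze_{n_j})$, and by continuity of the imaginary part a small ball around $\mu_0$ is disjoint from $\M(\ze_{n_j})$ for all large $j$, which violates the first Hausdorff condition at $\mu_0\in\M$ and the first Carath\'{e}odory condition at $\{\mu_0\}\subset\inte(\M)$.

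The main obstacle is the case $L>2$, where $\M(\ze_{n_j})$ can be nonempty and one cannot conclude by emptiness alone.  There the separating point $\mu_0$ must be engineered so that it lies in $\inte(\M)$ while some integer translate $\mu_0-(m+1)\ze_{n_j}$ is driven asymptotically into Wright's bad strip; this is where the high-imaginary-part inclusion $\{|\im z|>h_0\}\subset\inte(\M)$ enters crucially, and the precise imaginary part $L-1+\ep+mL$ is dictated by the need that $\mu_0$ and its translate sit on opposite sides of the critical level $|\im z|=1$.
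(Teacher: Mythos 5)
Your proof is correct and supplies exactly the details that the paper omits when it calls the lemma ``a direct consequence of (6.1)''. Since the paper gives no argument at all, there is no genuinely different route to compare against; your write-up is the natural elaboration of the hint, built on (6.1), Wright's strip, and one additional classical fact — that $\{z : |\im\,z| > h_0\}\subset\inte(\M)$ for some $h_0$. That last fact is not stated in the paper (the paper only cites Wright's complementary bound $\M\cap\{|\im\,z|\le 1\}=\emptyset$), and you are right that it is indispensable: the forward direction needs $\mu-k\ze_n\in\M$ for all $k\ne 0$, which can only be seen because those translates have huge $|\im|$; and in the backward direction your engineered $\mu_0$ with $\im\,\mu_0=L-1+\ep+mL$ must actually lie in $\inte(\M)$, which again requires the half-plane inclusion. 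Two cosmetic points: after passing to a subsequence with $\im\,\ze_{n_j}>2$ and extracting a convergent sub-subsequence, the limit $L$ may equal $2$, so write $L\in[2,Y]$ rather than $(2,Y]$ (your computation $\im(\mu_0-(m+1)\ze_{n_j})\to -1+\ep$ works verbatim for $L=2$, so nothing breaks); and in the Carath\'eodory part of the forward direction, the cleaner phrasing is that a compact $X\subset\inte(\M)$ admits $r>0$ with the open $r$-thickening $X_r\subset\inte(\M)$, and then $X_r\subset\M(\ze_n)$ once $\im\,\ze_n>h_0+\sup_{\mu\in X_r}|\im\,\mu|$, giving $X\subset X_r\subset\inte(\M(\ze_n))$ directly. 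Otherwise the argument is complete.
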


\subsection{Horocyclic and tangential convergence}

To describe our main theorems, we also need the following definition (see Figure 1): 
\begin{defn}\label{horotan}
Suppose that a sequence $\{\lam_n\}_{n=1}^\fty$ in the right-half plane 
$\C_+=\{z \in \C \,|\, \re \,z>0\}$ converges to $0$. 
We say that $\lam_n \to 0$ \it{horocyclically} if for any $\epsilon>0$,  
$|\lam_n -\epsilon|<\epsilon$ for all large $n$, and that
 $\lam_n \to 0$ \it{tangentially} if there is a constant $\epsilon_0>0$ 
such that   
$|\lam_n -\epsilon_0|>\epsilon_0$ for all  $n$.  
\end{defn}
\begin{figure}
\begin{center}
\includegraphics[width=8cm, bb=0 0 768 329]{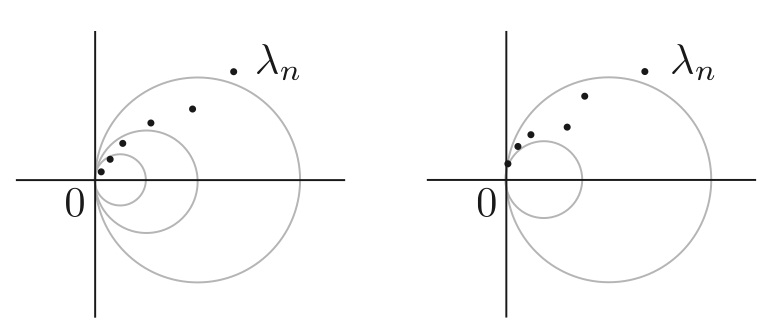}
\caption{Horocyclic convergence (left) and tangential convergence (right). }
\end{center}
\end{figure}
Note that $\lam_n \to 0$ horocyclically if and only if $|\im(2\pi i/\lam_n)| \to \fty$, 
and that tangentially if and only if $|\im(2\pi i/\lam_n)|$ are uniformly bounded above. 

When  a sequence $\beta_n \in \C \sm[-2,2]$ converges to $2$, 
the limit of the sequence $\LL(\beta_n)$ depends on whether  $\lam(\beta_n) \to 0$ horocyclically  
or  tangentially.  
The essence of the difference between  horocyclic and tangential convergence can be found 
in the next theorem on geometric limits of cyclic groups, which was first observed by J{\o}rgensen. 
See, for example, Theorem 3.3 in \cite{It} for the proof. 

We say that a sequence of discrete subgroups $G_n$ of $\psl$ 
converges {\it geometrically} to a subgroup $G$ of $\psl$ if 
$G_n$ converge to $G$ in the sense of Hausdorff as closed subsets of $\psl$.  

\begin{thm}\label{cyclic}
Suppose that a sequence  $B_n$  of 
loxodromic elements converges to 
$$
B=\left(\begin{array}{cc}1 & 2 \\0 & 1\end{array}\right)
$$
in $\psl$. Let $\lam_n$ denote the complex length of $B_n$. 
 Then we have the following: 
\begin{enumerate}
\item 
If  $\lam_n \to 0$ horocyclically, then the sequence $\la B_n \ra$ converges geometrically to $\la B \ra$.  

\item  Suppose that $\lam_n \to 0$ tangentially.  
We further assume  that 
there exists a complex number $\xi$  with $\im\,\xi \ge 0$ 
and a sequence $m_n$ of integers  with $|m_n| \to \fty$ 
such that 
\begin{equation*}
\lim_{n \to \fty} \left(\frac{2\pi i}{\lam_n} -m_n \right)=\xi. 
\end{equation*}
In this situation, we have 
$$
\lim_{n \to \fty}B_n^{-m_n}=C:=\left(\begin{array}{cc}1 & 2\xi \\0 & 1\end{array}\right). 
$$
In addition, if  $\im\,\xi \ne 0$,  
the sequence $\la B_n \ra$ converges  geometrically to the 
rank-$2$ parabolic group $\la B, C \ra$. 
\end{enumerate}
\end{thm}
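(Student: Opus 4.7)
The approach is to reduce the whole theorem to the explicit formula
\[
B_n^k = \cosh(k\lam_n/2)\, I + \frac{\sinh(k\lam_n/2)}{\sinh(\lam_n/2)}\bigl(B_n - \cosh(\lam_n/2)\, I\bigr),
\]
valid for every $k\in\Z$ by Cayley--Hamilton applied to $\tr B_n = 2\cosh(\lam_n/2)$. A short preliminary step checks that both fixed points of $B_n$ on $\hC$ tend to $\fty$: they may be written as $b_n/(e^{\pm\lam_n/2}-a_n)$, and the denominators go to $0$ while $b_n\to 2$. Consequently any subsequential limit $g$ of $B_n^{k_n}$ in $\psl$ is upper triangular and fixes $\fty$.

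The heart of the analysis is the following. Given a convergent subsequence $B_n^{k_n}\to g$ with $|k_n|\to\fty$, convergence of $\tr B_n^{k_n} = 2\cosh(k_n\lam_n/2)$ combined with $g$ being parabolic forces $\cosh(k_n\lam_n/2)\to\pm 1$, so there exist integers $\ell_n$ with $k_n\lam_n/2 - i\pi\ell_n\to 0$. Expanding $\sinh$ around $i\pi\ell_n$ in the power formula, I find that the translation parameter of $g$ in $\psl$ equals $2\lim(k_n - 2\pi i\ell_n/\lam_n)$ up to sign. Since $k_n$ is a real integer, the imaginary part $-2\pi\ell_n\re(1/\lam_n)$ of this expression must remain bounded.

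For Part 1, horocyclic convergence is equivalent to $\re(1/\lam_n)\to\fty$, which together with $\ell_n\in\Z$ forces $\ell_n=0$ for large $n$; then $k_n\lam_n\to 0$ and the same formula forces $k_n$ itself bounded, contradicting $|k_n|\to\fty$. Hence all convergent subsequences have bounded $k_n$, and the geometric limit is $\langle B\rangle$. For Part 2, applying the power formula directly to $k=-m_n$ with $m_n\lam_n = 2\pi i - \xi\lam_n + o(\lam_n)$ yields $\cosh(-m_n\lam_n/2)\to -1$ and $\sinh(-m_n\lam_n/2)/\sinh(\lam_n/2)\to -\xi$, so $B_n^{-m_n}\to -I - \xi(B-I)$, which is $C$ in $\psl$. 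For the geometric convergence when $\im\xi\neq 0$, the inclusion of $\langle B,C\rangle$ in the geometric limit follows from $B_n^{a - cm_n} = B_n^a(B_n^{-m_n})^c \to B^aC^c$. The reverse inclusion uses that tangentiality with $\im\xi\neq 0$ gives $\re(1/\lam_n)\to\im\xi/(2\pi)\neq 0$, so boundedness of $\ell_n\re(1/\lam_n)$ now forces $\ell_n$ bounded rather than zero; after subsequencing $\ell_n=\ell$ constant, the integer sequence $k_n-\ell m_n$ converges to some $j\in\Z$ and is eventually equal to it, giving $g = B^jC^{-\ell}\in\langle B,C\rangle$.

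The main technical point I expect to wrestle with is the tight interplay between the integers $k_n,\ell_n$ and the quantity $\re(1/\lam_n)$: the horocyclic vs tangential dichotomy is exactly what constrains $\ell_n$, and $\im\xi\neq 0$ is precisely the hypothesis that prevents $\ell_n$ from being forced to zero, so that the extra parabolic generator $C$ actually arises in the geometric limit.
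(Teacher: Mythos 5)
The paper does not actually prove Theorem~\ref{cyclic}: it attributes the result to J{\o}rgensen and points the reader to Theorem~3.3 of \cite{It} for a proof. So there is no in-paper argument to compare against. Your reconstruction via the Cayley--Hamilton power formula is correct and is, as far as I can tell, the standard direct proof and essentially the one in \cite{It}. A few details you glossed over do all check out: the formula $B_n^k=\cosh(k\lam_n/2)I+\frac{\sinh(k\lam_n/2)}{\sinh(\lam_n/2)}(B_n-\cosh(\lam_n/2)I)$ is valid in $\ssl$ once one lifts $B_n\to B$ compatibly with $\tr B_n=2\cosh(\lam_n/2)\to 2$; the equivalence of horocyclic convergence with $\re(1/\lam_n)\to\fty$ follows directly from Definition~\ref{horotan}; the expansion around $i\pi\ell_n$ really gives $\frac{\sinh(k_n\lam_n/2)}{\sinh(\lam_n/2)}=(-1)^{\ell_n}(k_n-2\pi i\ell_n/\lam_n)+o(1)$, and the sign $(-1)^{\ell_n}$ you drop is harmless since we work in $\psl$; and the boundedness of $\im(k_n-2\pi i\ell_n/\lam_n)=-2\pi\ell_n\re(1/\lam_n)$ is exactly the lever the two parts of the theorem pull on. In Part~2 you should also note the bounded-$k_n$ case (which trivially gives limits in $\la B\ra\subset\la B,C\ra$) to complete the identification of the geometric limit, but this is routine. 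The argument is sound.
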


\begin{rem}
When $\lam_n \to 0$ tangential,  there is a constant $M>0$ such that 
$0<\im(2\pi i/\lam_n)<M$ for every $n$.   
Therefore we may assume that, 
by pass to a subsequence if necessary,  the sequence  
$2\pi i/\lam(\beta_n)$ converges to some $\xi \in \C$ with $\im\,\xi \ge 0$  up to the action of $z \mapsto z+1$.  
\end{rem}

\subsection{Main theorem for tangential convergence}

We can now state our main theorem for linear slices $\LL(\beta_n)$ 
such that  $\lam(\beta_n)$ converge tangentially to $0$. 
See Figure \ref{tr_conv}, left column. 

\begin{thm}\label{tan}
Let 
$\{\beta_n\}_{n=1}^\fty$ be a sequence in $\C \sm [-2,2]$ 
which converges to $2$ as $n \to \fty$. 
Suppose that 
$\lam(\beta_n)$ converge tangentially to $0$. 
We further assume that there exists a complex number $\xi$ with $\im\,\xi \ge 0$
and a sequence $m_n$ of integers  with $|m_n| \to \fty$ 
such that 
$$
\lim_{n \to \fty} \left(\frac{2\pi i}{\lam(\beta_n)}-m_n\right)= \xi. 
$$
Then we have the following: 
\begin{enumerate}
\item $\LL(\beta_n)$ converge to $i \M({2\xi})$ in the sense of Hausdorff as $n \to \fty$. 
\item $\inte(\LL(\beta_n))$ converge to $\inte(i \M({2\xi}))$ in the sense of Carath\'{e}odory as $n \to \fty$. 
\end{enumerate}
\end{thm}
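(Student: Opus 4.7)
Set $\ze_n := 4\pi i/\lam(\beta_n)$. The hypothesis rewrites as $\ze_n - 2m_n \to 2\xi$. Since $\M$ is invariant under $z \mapsto z+2$, the intersection formula of Theorem \ref{BromLem} gives $\M(\ze) = \M(\ze + 2)$ for $\im\,\ze > 0$, so $\M(\ze_n) = \M(\ze_n - 2m_n)$; morally $\LL(\beta_n)$ should track $i\M(\ze_n)$, which in turn approaches $i\M(2\xi)$. Under tangential convergence, $|\ze_n| \to \fty$ while $\im\,\ze_n$ remains bounded --- exactly the regime covered by Proposition \ref{ball1}.

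I would first establish a bridging lemma (this should be Lemma \ref{tanlem} of the flow chart) with two parts: (a) if $\nu \in \inte(\M(2\xi))$, then $i\nu \in \inte(\LL(\beta_n))$ for all large $n$; and (b) if $\alp_{n_j} \in \LL(\beta_{n_j})$ and $\alp_{n_j} \to \alp_\fty$, then $\alp_\fty \in i\M(2\xi)$. For part (a), assuming $\im\,\xi > 0$ (the main case), Wright's theorem puts $\{z : |\im\,z| > 1\}$ inside $\M$, so $\M(2\xi) = \bigcap_k(2k\xi + \M)$ is locally a finite intersection; thus $\nu \in \inte(\M(2\xi))$ produces $\ep > 0$ with $B_\ep(\nu, 2\xi) \subset \A$, and the invariance of $\A$ under $(\mu,\ze) \mapsto (\mu, \ze+2)$ combined with $\ze_n - 2m_n \to 2\xi$ transports this to $B_{\ep/2}(\nu, \ze_n) \subset \A$ for $n$ large. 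Proposition \ref{ball1} then yields $B_{\ep/4}(\nu, \ze_n) \subset \D$, whose horizontal slice at height $\ze_n$ is $i\LL(\beta_n)$, giving $i\nu \in \inte(\LL(\beta_n))$. For part (b), I would lift $\alp_{n_j} \in \LL(\beta_{n_j})$ to $\rho_{n_j} \in AH(N,P)$ and conjugate so that $\rho_{n_j}(a) \to A_{\alp_\fty}$ and $\rho_{n_j}(b) \to B$; passing to a further subsequence, $\rho_{n_j}(\pi_1(S))$ converges geometrically to a Kleinian group $\Gamma$. Theorem \ref{cyclic}(2) gives $\la \rho_{n_j}(b)\ra \to \la B, C\ra$ geometrically, with $C = \begin{pmatrix}1 & 2\xi \\ 0 & 1\end{pmatrix}$, so $C \in \Gamma$. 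For each $k \in \Z$, the automorphism $\phi_k$ of $\pi_1(S) = \la a, b\ra$ defined by $a \mapsto b^{-k m_{n_j}} a, b \mapsto b$ preserves the peripheral conjugacy class; hence $\rho_{n_j} \circ \phi_k \in AH(N,P)$, and it converges algebraically to the representation $\tau_k: a \mapsto C^k A_{\alp_\fty} = A_{\alp_\fty - 2ki\xi}, b \mapsto B$. By closedness of $AH(N,P)$, $\tau_k \in AH(N,P')$, i.e., $\alp_\fty - 2ki\xi \in \LL(2) = i\M$. Rewriting, $-i\alp_\fty - 2k\xi \in \M$ for every $k$, so $-i\alp_\fty \in \bigcap_k(2k\xi + \M) = \M(2\xi)$, i.e., $\alp_\fty \in i\M(2\xi)$.

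With the bridging lemma in hand, the Hausdorff claim splits as usual: every subsequential limit of $\alp_{n_j} \in \LL(\beta_{n_j})$ lies in $i\M(2\xi)$ by (b); and every $\alp \in i\M(2\xi)$ is a limit of $\alp_n \in \LL(\beta_n)$ by combining (a) with density of $\inte(\M(2\xi))$ in $\M(2\xi)$ (which follows from density of $\inte(\M)$ in $\M$, a consequence of the Brock--Canary--Minsky density theorem, together with the local finiteness of the intersection) and a diagonal argument. The Carath\'eodory claim for interiors is then routine: any compact $X \subset \inte(i\M(2\xi))$ is covered by finitely many balls each placed inside $\inte(\LL(\beta_n))$ for $n$ large by (a); and any open $O \subset \inte(\LL(\beta_{n_j}))$ for infinitely many $j$ is placed inside $i\M(2\xi)$ by (b), hence inside $\inte(i\M(2\xi))$ since $O$ is open.

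The hardest ingredient is part (b): one must set up the geometric limit so that Theorem \ref{cyclic} applies cleanly to $\la \rho_{n_j}(b)\ra$, and verify that the automorphism $\phi_k$ of $\pi_1(S)$ really produces $\tau_k$ as an algebraic limit inside the closed set $AH(N,P')$ rather than merely identifying $\la A_{\alp_\fty - 2ki\xi}, B\ra$ as an abstract subgroup of $\Gamma$. A secondary subtlety is the corner case $\im\,\xi = 0$, where $\M(2\xi) = \emptyset$ by the paper's convention and Theorem \ref{cyclic}(2) does not yield a rank-$2$ parabolic geometric limit; in this degenerate case the conclusion should be interpreted as asserting that no bounded subsequence of $\alp_n \in \LL(\beta_n)$ survives, and handled by a direct degeneration argument.
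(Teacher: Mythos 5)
Your Lemma \ref{tanlem} (your part (a)) is essentially the paper's argument: both transport a ball $B_\ep(\nu,2\xi)\subset\A$ by the period translation to land it near height $\ze_n=4\pi i/\lam(\beta_n)$, then invoke Proposition \ref{ball1} to push it from $\A$ to $\D$. The reduction of the Hausdorff and Carath\'eodory claims to the two bridging statements is also the same. A small slip: the fact that $\M^+$ contains a half-plane $\{\im z>h\}$ (needed for local finiteness of $\bigcap_k(2k\xi+\M)$) is not Wright's theorem, which is the \emph{exclusion} of the stripe $\{|\im z|\le 1\}$ from $\M$; the half-plane inclusion is a separate well-known fact, but the paper never needs it in Lemma \ref{tanlem} because it goes straight from $\alp\in\inte(i\M(2\xi))$ to $(i\alp,2\xi)\in\inte(\A)$.

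Where you genuinely diverge is in direction (b), i.e.\ condition (H2). You follow the intuition of the paper's introduction: precompose $\rho_{n_j}$ with the Dehn-twist automorphisms $\phi_{k,n_j}:a\mapsto b^{-km_{n_j}}a,\ b\mapsto b$, use $\rho_{n_j}(b)^{-m_{n_j}}\to C$ from Theorem \ref{cyclic}(2), and conclude $\tau_k=\rho_{\alp_\fty-2ki\xi}\in AH(N,P')$ by closedness. The paper instead lifts $\rho_n$ to a representation $\chi_n$ of $\pi_1(\hN)=\la a,b,c:[b,c]=1\ra$ with $\chi_n(c)=\rho_n(b)^{-m_n}$, shows $\chi_n\to\hat\sigma_{i\alp,2\xi}$ algebraically, and uses the J{\o}rgensen--Klein theorem together with a word-length argument to prove $\hat\sigma_{i\alp,2\xi}$ is both discrete and faithful, hence $(i\alp,2\xi)\in\B$. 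This is not a cosmetic difference: membership in $\B$ forces $\im\xi\ne 0$, since $\im\ze=0$ makes $\hat\sigma_{\mu,\ze}$ unfaithful (for $\xi$ rational) or indiscrete (for $\xi$ irrational). That is exactly what rules out the degenerate case for free.

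This is where your proof has a genuine gap. You flag the case $\im\xi=0$ and propose it ``be handled by a direct degeneration argument,'' but you do not supply one, and your Dehn-twist argument does not produce a contradiction there. Tangential convergence with $\beta_n\notin[-2,2]$ does allow $\im(2\pi i/\lam(\beta_n))\to 0$ (take $\lam_n$ approaching the positive imaginary axis from the right, with $|\lam_n|$ large compared to $\cos(\arg\lam_n)$ so that $|\lam_n-\ep_0|>\ep_0$ still holds), so $\im\xi=0$ is a live possibility and in that case $\M(2\xi)=\emptyset$. Your argument only yields $-i\alp_\fty\in\bigcap_{k\in\Z}(2k\xi+\M)$, but when $\im\xi=0$ this intersection is \emph{not} $\M(2\xi)$ and in fact contains a half-plane (it is non-empty for every real $\xi$, rational or not), so no contradiction is reached; you cannot conclude that $\LL(\beta_n)$ escapes to infinity. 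The case $\xi$ real irrational can be patched separately by observing $\la B,C\ra$ is indiscrete yet lies in the geometric limit, but the case $\xi$ rational (where $\la B,C\ra$ is a discrete cyclic parabolic group) requires something more, and this is precisely the faithfulness half of the paper's claim about $\hat\sigma_{i\alp,2\xi}$. To make your route complete you would need to supply the missing argument; the cleanest fix is simply to adopt the paper's $\pi_1(\hN)$-representation and the J{\o}rgensen--Klein/word-length argument, which handles all values of $\im\xi\ge 0$ uniformly.
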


\begin{figure}
\begin{center}
\includegraphics[height=15cm, bb=0 0 675 1015]{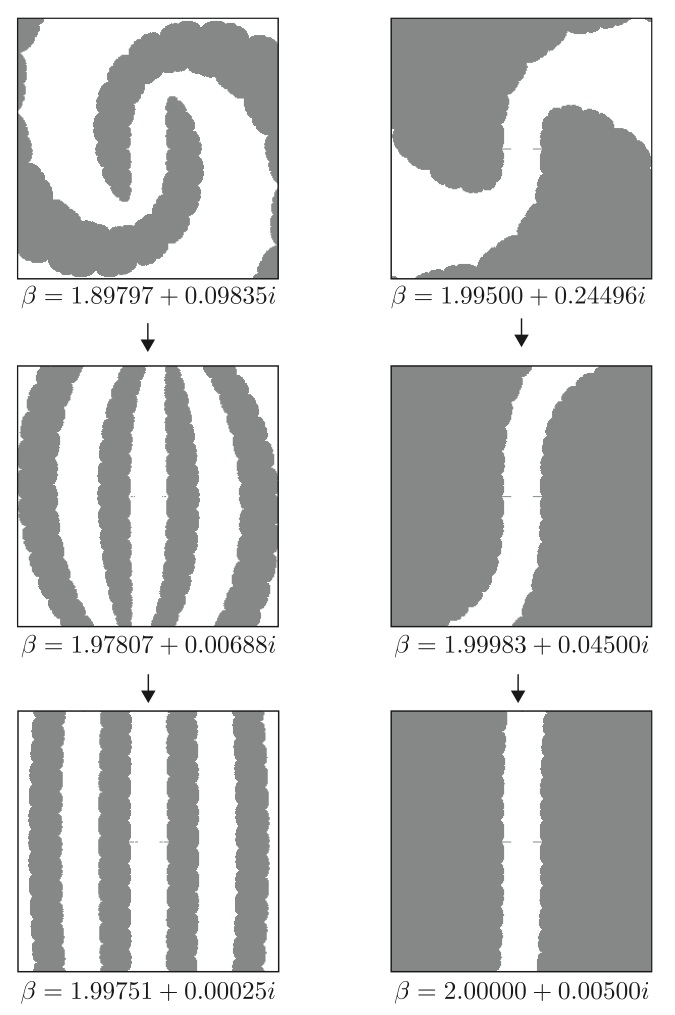}
\caption{Computer-generated figure of linear slices $\LL(\beta)$
 (gray parts) for $\beta$ close to $2$, restricted to the square of width $24$ centered at $0$. 
Left column corresponds to tangential convergence $\lam(\beta) \to0$, where 
$\lam(\beta)$ are points on  the circle $|z-1|=1$ 
whose imaginary part equal $0.7$ (top),  $0.3$ (middle) and $0.1$ (bottom). 
Right column corresponds to horocyclic convergence $\lam(\beta) \to0$, 
where $\lam(\beta)$ equal $0.7+0.7i$ (top),  $0.3+0.3i$ (middle) and $0.1+0.1i$ (bottom).  }
\label{tr_conv}
\end{center}
\end{figure}

The following lemma is an essential part of the proof of Theorem \ref{tan}. 

\begin{lem}\label{tanlem}
Under the same assumption as in Theorem  \ref{tan}, 
we have the following: 
For any $\alp \in \inte(i \M({2\xi}))$ 
there exists $\ep>0$  such that 
$B_\ep(\alp) \subset  \inte(\LL(\beta_n))$
for all large $n$. 
\end{lem}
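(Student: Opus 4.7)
The plan is to reduce the lemma to a direct application of Proposition \ref{ball1} combined with a $2\Z$-periodicity argument. Set $\nu := -i\alp$. The case $\im \xi = 0$ makes $\M(2\xi)$ empty (its defining condition $\im\ze>0$ fails) and the conclusion vacuous, so I assume $\im\xi > 0$. The hypothesis then gives $\nu \in \inte(\M(2\xi))$, and formula (6.1) forces $\nu - 2k\xi \in \inte(\M)$ for every $k \in \Z$, in particular $\nu \in \inte(\M)$. Using the $z \mapsto -z$ symmetry of the linear slices, which transfers to $\M$ and to $\D$, the desired conclusion $B_\ep(\alp) \subset \inte(\LL(\beta_n))$ is equivalent, via $F$, to $B_\ep(\nu) \subset \D(\ze_n)$, where $\ze_n := 4\pi i/\lam(\beta_n)$ and $\D(\ze) := \{\mu : (\mu, \ze) \in \D\}$.

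I then apply Proposition \ref{ball1} at $\nu$ to obtain $\ep_0 > 0$: for any $0 < \ep < \ep_0$ and any $I > 0$, there is $K > 0$ such that $B_\ep(\nu, z) \subset \A$ implies $B_{\ep/2}(\nu, z) \subset \D$ whenever $|z| > K$ and $0 < \im z < I$. In the tangential regime $\im \ze_n$ is uniformly bounded by some $I$ and $|\ze_n| \to \fty$, so for large $n$ the conditions $|\ze_n| > K$ and $\im \ze_n < I$ are met. Thus the lemma reduces to the key claim: there exists $\ep > 0$ (taken below $\ep_0$) with $B_\ep(\nu, \ze_n) \subset \A$ for all large $n$; projecting $B_{\ep/2}(\nu, \ze_n) \subset \D$ onto its first coordinate then furnishes the desired inclusion (and since $B_{\ep/2}(\alp)$ is open, membership in $\LL(\beta_n)$ entails membership in $\inte(\LL(\beta_n))$).

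For the key claim, I would exploit that $\A$ is invariant under $(\mu, \ze) \mapsto (\mu, \ze + 2)$, since $\M$ has period $2$ and Theorem \ref{BromLem} characterizes $(\mu, \ze) \in \B$ by $\mu - k\ze \in \M$ for every integer $k$. Writing $\eta_n := \ze_n - 2m_n$, the hypothesis gives $\eta_n \to 2\xi$, and the desired inclusion becomes $B_\ep(\nu, \eta_n) \subset \A$. It now suffices to establish that $(\nu, 2\xi)$ lies in the $\C^2$-interior of $\B^+$: a fixed $\C^2$-ball $B_\delta(\nu, 2\xi) \subset \B^+ \subset \A$ will contain $B_{\delta/2}(\nu, \eta_n)$ for all large $n$, and one may set $\ep := \min(\delta/2, \ep_0)$.

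The main obstacle is therefore upgrading the one-dimensional condition $\nu \in \inte(\M(2\xi))$ to the two-dimensional condition $(\nu, 2\xi) \in \inte_{\C^2}(\B^+)$. I would do this by verifying that $\hat\sigma_{\nu, 2\xi}$ is geometrically finite and minimally parabolic, and then invoking the Marden-Sullivan theorem to place it in $\inte(AH(\hN, \hP))$. The subgroup $\la \hat\sigma_{\nu,2\xi}(b), \hat\sigma_{\nu,2\xi}(c) \ra$ is a genuine rank-$2$ parabolic because $\im(2\xi) > 0$; no accidental parabolic arises because $\nu \in \inte(\M)$ places $\sigma_\nu$ in $MP(N, P')$; and the discreteness and faithfulness of $\hat\sigma_{\nu,2\xi}$ follow from Theorem \ref{BromLem} together with $\nu - 2k\xi \in \M$ for every $k$.
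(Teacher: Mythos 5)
Your proof takes the same route as the paper: apply Proposition~\ref{ball1}, exploit the $2\Z$-periodicity of $\A$ in the second variable, and use that $(\nu, 2\xi)$ lies in the $\C^2$-interior of $\A$. The paper applies Proposition~\ref{ball1} at the shifted point $z = 2\xi + 2m_n$ and then moves $\ze_n := 4\pi i/\lam(\beta_n)$ to within $\ep/4$ of it, whereas you translate $\ze_n$ by $-2m_n$ first and apply the proposition at $z = \ze_n$; the two bookkeepings are interchangeable. Your choice $\nu = -i\alp$ versus the paper's $i\alp$ is also harmless by the $z \mapsto -z$ symmetry.

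The one place you go beyond the paper is in trying to justify the step $\nu \in \inte(\M(2\xi)) \Rightarrow (\nu, 2\xi) \in \inte_{\C^2}(\B^+)$, which the paper asserts in a single sentence without proof. Your proposed route --- show $\hat\sigma_{\nu,2\xi} \in MP(\hN,\hP)$ and invoke Marden--Sullivan --- is reasonable in outline, but the minimal-parabolicity step is not adequately supported. Saying ``no accidental parabolic arises because $\nu \in \inte(\M)$ places $\sigma_\nu$ in $MP(N,P')$'' only controls the subgroup $\la A, B \ra$; you must rule out accidental parabolics in all of $\la A, B, C\ra$. For instance $\hat\sigma_{\nu,2\xi}(c^{-k}a)$ is parabolic iff $\nu - 2k\xi = \pm 2i$, and while this is excluded by $\nu - 2k\xi \in \inte(\M)$ (since $\pm 2i \notin \inte(\M)$, else $\sigma_{\pm 2i}(a)$ would be an accidental parabolic of an $MP(N,P')$ representation), longer words in the free product $\la a \ra * \la b, c\ra$ also need treatment, typically via a combination theorem. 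A cleaner justification stays with Theorem~\ref{BromLem}: near $(\nu,2\xi)$ only finitely many of the constraints $\mu - k\ze \in \M$ can fail, because for $|k|$ large the point $\mu - k\ze$ has large $|\im|$ and $\M$ contains every $z$ with $|\im\,z|$ sufficiently large, while the finitely many remaining constraints are open at $(\nu,2\xi)$ since each $\nu - 2k\xi$ lies in $\inte(\M)$.
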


\begin{proof} 
Suppose that $\alpha  \in \inte(i  \M({2\xi}))$. 
Then  $(i\alp,2\xi) \in \inte(\A)$. 
Let $\ep_0>0$ be the constant in Proposition \ref{ball1} for $\nu=i\alp$. 
Since $(i\alp,2\xi) \in \inte(\A)$, one can find $0<\ep<\ep_0$ such that 
$B_\ep(i \alpha, 2\xi) \subset \A$. 
Since the set $\A$ is invariant under the action $(z,w) \mapsto (z,w+2)$, 
we have $B_\ep(i \alpha, 2\xi+2m_n) \subset \A$. 
Let $K>0$ be the constant in Proposition \ref{ball1} 
for $\ep>0$ chosen above and $I=\im(2 \xi)+1$. 
Since $|m_n| \to \fty$ as $n \to \fty$,   
we have $|2\xi+2m_n|>K$ for all large $n$. 
Then by Proposition \ref{ball1},  we have  
$$
B_{\ep/2}(i \alpha, 2\xi+2m_n) \subset  \D
$$
for all large $n$. 
On the other hand, since  the sequence $\{2\pi i/\lam(\beta_n)-m_n\}$ 
converges to $\xi$ as $n \to \fty$, 
we have 
$$
\left|\frac{4\pi i}{\lam(\beta_n)} -(2\xi+2m_n)\right| < \ep/4
$$
for all large $n$. 
Therefore 
$$
B_{\ep/4}\left(i \alpha, \frac{4\pi i}{\lam(\beta_n)}\right) \subset  \D
$$
hold for all large $n$. 
Thus we obtain 
$B_{\ep/4}(\alp) \subset  \inte(\LL(\beta_n))$ for all large $n$. 
\end{proof}

\begin{proof}[Proof of Theorem \ref{tan}] 
We need to prove the following four conditions (H1), (H2), (C1) and (C2), 
where 
(H1) and (H2) are corresponding to the Hausdorff convergence and 
(C1) and (C2) are corresponding to the Carath\'{e}odory  convergence: 
\begin{description}
\item[(H1)] For any $\alpha \in i  \M({2\xi})$ there exists a sequence  
$\alpha_n \in \LL(\beta_n)$ such that $\alpha_n \to \alpha$.  
\item[(H2)] If $\alpha_{n_j} \in \LL(\beta_{n_j})$ and $\alpha_{n_j} \to \alpha$ 
then $\alpha \in i\M({2\xi})$.  
\item[(C1)] For any compact subset 
$X \subset \inte(i \M({2\xi}))$, $X \subset \inte(\LL(\beta_n))$ for all large $n$. 
\item[(C2)] If there exist an open subset $O \subset \C$ 
and a infinite sequence $\{n_j\}_{j=1}^\fty$ such that 
$O \subset \inte(\LL(\beta_{n_j}))$, then $O \subset \inte(i\M({2\xi}))$. 
\end{description}

Proof of (H1):  
For any $\alp \in i\M({2\xi})$,  there exists a sequence $\{\alp(j)\}_{j =1}^\fty$ 
in the interior of $i\M({2\xi})$ such that $\alp(j) \to \alp$ as $j \to \fty$. 
It follows from Lemma \ref{tanlem} that for every $j$,  
there exists positive constant $N(j)$ such that $\alp(j) \in \LL(\beta_n)$ 
for all $n \ge N(j)$. 
Thus we obtain the result. 
To be more precise, let us choose $\{N(j)\}_{j=1}^\fty$ so  that $N(j+1)>N(j)$ and 
$N(j) \to \fty$ as $j \to \fty$, and set $\alp_n:=\alp(j)$ for every  $N(j)  \le  n  < N(j)$. 
Since $j \to \fty$ as $n \to \fty$,   
we obtain $\alp_n \in  \LL(\beta_n)$ and $\alp_n \to \alp$ as $n \to \fty$. 

\bigskip

Proof of (C1):  
Let $X$ be a compact subset of $\inte(i \M({2\xi}))$. 
For every $\alp \in X$, it follows from Lemma \ref{tanlem} that there exist $\ep(\alp)>0$ and 
$N(\alp)>0$ such that 
$B_{\ep(\alp)}(\alp) \subset \inte(\LL(\beta_n))$ 
for all $n \ge N(\alp)$. 
Since 
$$
\bigcup_{\alp \in X} B_{\ep(\alp)}(\alp)
$$
is an open covering of the compact set $X$, 
we can choose finite set of points  $\{\alp_j\}_{j=1}^l \subset X$ such that 
$$
\bigcup_{1 \le j \le l} B_{\ep(\alp_j)}(\alp_j)
$$
is also an open covering of $X$. 
Set  $N:=\max_{1 \le j \le l}{N(\alp_j)}$. 
Then 
$B_{\ep(\alp_j)}(\alp_j) \subset \inte(\LL(\beta_n))$ 
for all $n \ge N$ and all $1 \le j \le l$. 
Thus we obtain 
$X \subset \inte(\LL(\beta_n))$ for all $n \ge N$. 

\bigskip

Proof of (H2):  
For simplicity, we denote $\{n_j\}$ by $\{n\}$ and assume that 
$\alpha_{n} \in \LL(\beta_{n})$ converge to $\alpha$ as $n \to \fty$. 
Take $\rho_n \in AH(N,P) \cap \Omega$ such that $\Tr(\rho_n)=(\alp_n,\beta_n)$. 
Since $\alp_n \to \alp$ and $\beta_n \to 2$, 
the sequence $\{\rho_n\}_{n=1}^\fty$ 
converges algebraically to the conjugacy class of $\sigma_{i\alp}$ in $AH(N,P)$. 
We may assume that the representatives of the conjugacy classes 
$\rho_n$, which are also denoted by $\rho_n$, converge algebraically to $\sigma_{i\alp}$.  

We now consider representations $\chi_n$ of $\pi_1(\hN)=\la a,b,c : [b,c]=\id \ra$ into 
$\psl $ defined by 
$$
\chi_n(a):=\rho_n(a), \quad 
\chi_n(b):=\rho_n(b), \quad 
\chi_n(c):=(\rho_n(b))^{-m_{n}}. 
$$
One can see form Theorem \ref{cyclic} that the sequence $\chi_n$ 
converges algebraically to $\chi_\fty:=\hat \sigma_{i\alp,2\xi}$, 
which is defined in 4.2.  
We now claim that $\chi_\fty=\hat \sigma_{i\alp,2\xi}$ is faithful and discrete. 
If this is true, we obtain $(i\alp,2\xi) \in \B$.  
Especially we have $\im\,\xi \ne 0$. 
It then follows from $\im\,\xi \ge 0$  that $(i\alp,2\xi) \in \A$, and thus $\alp \in i \M({2\xi})$. 
Therefore we only have to show the claim above. 

Since $\pi_1(\hN)$ is finitely generated and 
since the image $\chi_n(\pi_1(\hN))$ of $\chi_n$  is equal to the discrete group $\rho_n(\pi_1(N))$, 
it follows from the theorem due to J{\o}rgensen and Klein in \cite{JK} 
that $\chi_\fty(\pi_1(\hN))$ is discrete and that 
there exist group homomorphisms 
$$
\psi_n:\chi_\fty(\pi_1(\hN)) \to \chi_n(\pi_1(\hN))
$$ 
satisfying  $\chi_n=\psi_n  \circ \chi_\fty$. 
Now suppose for contradiction that there is a non-trivial element $g$ in $\ker \chi_\fty$. 
Then it must lie  in $\ker \chi_n$ for all $n$. 
Since $\ker \chi_n$ is normally generated by a word $ b^{m_n}c$,  
and since the word length of 
$g \in \pi_1(\hN)$ with respect to the generators $a,b,c$ is bounded, 
 we obtain a contradiction. Thus we obtain the claim.

\bigskip

Proof of (C2):  
By the same argument as in the proof for (H2), 
we have $\alp \in i\M({2\xi})$ for every $\alp \in O$. 
Therefore $O \subset  i\M({2\xi})$. 
Since $O$ is open, we have 
$O \subset  \inte(i\M({2\xi}))$. 
\end{proof}

\subsection{Main theorem for horocyclic convergence}

We now state our main theorem for linear slices $\LL(\beta_n)$  
such that $\lam(\beta_n)$ converge horocyclically to $0$. 
See Figure \ref{tr_conv}, right column. 

\begin{thm}\label{horo}
Let $\{\beta_n\}_{n=1}^\fty$ be a sequence in $\C \sm [-2,2]$ which converges to $2$ as $n \to \fty$. 
Suppose that $\lam(\beta_n)$ converge horocyclically to $0$.  
Then we have the following: 
\begin{enumerate}
\item $\LL(\beta_n)$ converge to $i\M$ in the sense of Hausdorff as $n \to \fty$. 
\item $\inte(\LL(\beta_n))$ converge to $\inte(i \M)$ in the sense of Carath\'{e}odory as $n \to \fty$. 
\end{enumerate}
\end{thm}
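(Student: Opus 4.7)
The plan is to deduce Theorem \ref{horo} along the bottom row of the paper's mainstream diagram, by first establishing a horocyclic counterpart---call it Lemma \ref{horolem}---of the tangential Lemma \ref{tanlem}: for every $\alp \in \inte(i\M)$ there exists $\ep>0$ such that $B_\ep(\alp) \subset \inte(\LL(\beta_n))$ for all sufficiently large $n$. To prove this, I would fix $\alp \in \inte(i\M)$, apply Proposition \ref{ball2} at $\nu = i\alp \in \inte(\M)$ to produce $\ep > 0$ and $I > 0$ with $B_{\ep,I}(i\alp) \subset \A$ and $B_{\ep/2,2I}(i\alp) \subset \D$, and then exploit the defining property of horocyclic convergence, namely $\im(4\pi i / \lam(\beta_n)) \to \fty$ (note that $\re\,\lam(\beta_n) > 0$ forces $\im(4\pi i/\lam(\beta_n))$ to be positive). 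Once $n$ is large enough that $\im(4\pi i / \lam(\beta_n)) > 2I$, every $\alp'$ with $|\alp' - \alp| < \ep/2$ satisfies $(i\alp', 4\pi i/\lam(\beta_n)) \in B_{\ep/2, 2I}(i\alp) \subset \D = F(\D_\tr^+)$, so $\alp' \in \LL(\beta_n)$; since $B_{\ep/2}(\alp)$ is open in $\C$, it is contained in $\inte(\LL(\beta_n))$.

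With Lemma \ref{horolem} in hand, I would mirror the proof of Theorem \ref{tan} and verify the four conditions (H1), (H2), (C1), (C2), now with $i\M(2\xi)$ replaced by $i\M$. Both (H1) and (C1) follow formally by the same arguments as in the tangential case: approximate a point of $i\M$ by interior points and diagonalize for (H1); cover a compact subset of $\inte(i\M)$ by finitely many balls supplied by the lemma for (C1).

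The remaining two conditions turn out to be substantially easier than in the tangential setting, because the limit $i\M = \LL(2)$ involves no additional identification. For (H2), given $\alp_{n_j} \in \LL(\beta_{n_j})$ converging to $\alp$, I would take representatives $\rho_{n_j} \in \Omega$ with $\Tr(\rho_{n_j}) = (\alp_{n_j},\beta_{n_j})$ converging algebraically to some $\rho_\fty \in R(N,P)$ with $\Tr(\rho_\fty) = (\alp, 2)$; closedness of $AH(N,P)$ (J{\o}rgensen) together with the fact that $\rho_\fty(b)$ is parabolic places $\rho_\fty$ in $AH(N,P')$, and hence $\alp \in \LL(2) = i\M$. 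Condition (C2) is then immediate: if $O$ is open and contained in $\inte(\LL(\beta_{n_j}))$ for infinitely many $j$, then (H2) gives $O \subset i\M$, and openness of $O$ upgrades this to $O \subset \inte(i\M)$.

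The only real obstacle is the quantitative comparison of $\A$ and $\D$ in the regime where the second coordinate has very large imaginary part, which is precisely the content of Proposition \ref{ball2} via the Bromberg-vs-trace coordinate estimates of Theorem \ref{estimate}. Once those two ingredients are in place, no additional delicate analysis is required: in particular, neither the geometric-limit calculation of cyclic groups (Theorem \ref{cyclic}) nor the J{\o}rgensen--Klein discreteness argument, both of which carried the weight of (H2) in the tangential case, is needed here, which is exactly what one would hope in view of the simpler shape of the limit set.
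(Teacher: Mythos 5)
Your proposal follows the paper's proof essentially line for line: Proposition \ref{ball2} (with the hypothesis $B_{\ep,I}(i\alp)\subset\A$ supplied by Theorem \ref{BromLem}) gives Lemma \ref{horolem}, (H1) and (C1) follow by approximation and compactness exactly as in the tangential case, and (H2) and (C2) reduce to closedness of $AH(N,P)$ together with $\rho_\fty(b)$ being parabolic. Your closing observation that neither Theorem \ref{cyclic} nor the J{\o}rgensen--Klein argument is needed here is also exactly the distinction the paper makes between the two cases.
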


The following lemma is an essential part of the proof of Theorem \ref{horo}. 

\begin{lem}\label{horolem}
Under the same assumption as in Theorem \ref{horo}, we have the following: 
For any $\alp \in \inte(i \M)$ there exists $\ep>0$ such that 
$B_\ep(\alp) \subset  \inte(\LL(\beta_n))$
for all large $n$. 
\end{lem}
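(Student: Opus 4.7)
The plan is to imitate the proof of Lemma \ref{tanlem}, substituting Proposition \ref{ball2} for Proposition \ref{ball1}. In fact, the horocyclic case should be genuinely simpler than the tangential one: since the limiting slice is $\M(\fty)=\M$ itself, there is no need to invoke the translation invariance of $\A$ under $\ze\mapsto\ze+2$ or to chase the integer shifts $m_n$ that appeared in the tangential argument.

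First I would fix $\alp\in\inte(i\M)$, set $\nu:=i\alp\in\inte(\M)$, and invoke Proposition \ref{ball2} to obtain the associated $\ep_0>0$. I would then pick any $0<\ep<\ep_0$ and, as in the setup of that proposition, choose $I>0$ large enough that $B_{\ep,I}(\nu)\subset\A$ (which, by Theorem \ref{BromLem} together with the fact that a neighborhood of $\nu$ lies in $\inte(\M)$ and that points with sufficiently large imaginary part belong to $\M$, is automatic once $I$ is large enough). Proposition \ref{ball2} then yields
$$
B_{\ep/2,2I}(\nu)\subset\D.
$$

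Next I would translate horocyclic convergence into a statement about $\ze_n:=4\pi i/\lam(\beta_n)$. By the remark following Definition \ref{horotan}, $\lam(\beta_n)\to 0$ horocyclically forces $|\im(2\pi i/\lam(\beta_n))|\to\fty$; since $\lam(\beta_n)\in\C_+$ forces $\im\ze_n>0$, this in fact gives $\im\ze_n\to+\fty$. Hence $\im\ze_n>2I$ for all large $n$, so $(i\alp',\ze_n)\in B_{\ep/2,2I}(\nu)\subset\D$ for every $\alp'\in B_{\ep/2}(\alp)$. Unwinding the definition $\D=F(\D_\tr^+)$ together with $\ze_n=4\pi i/\lam(\beta_n)$, this membership is equivalent to $\alp'\in\LL(\beta_n)$, and because $B_{\ep/2}(\alp)$ is open we conclude $B_{\ep/2}(\alp)\subset\inte(\LL(\beta_n))$ for all large $n$, which is the desired conclusion with $\ep/2$ in place of $\ep$.

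The only step that demands any care is identifying the correct sign of $\im\ze_n$ (positivity of $\re\lam(\beta_n)$ settles this immediately); once this is in hand, the lemma is essentially a direct application of Proposition \ref{ball2}, parallel to but noticeably easier than the proof of Lemma \ref{tanlem}.
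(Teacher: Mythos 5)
Your proposal is correct and matches the paper's proof step for step: fix $\nu=i\alp$, apply Proposition \ref{ball2} to get $B_{\ep/2,2I}(\nu)\subset\D$ (after using Theorem \ref{BromLem} to ensure $B_{\ep,I}(\nu)\subset\A$), then use horocyclic convergence to push $\im(4\pi i/\lam(\beta_n))>2I$ for large $n$ and unwind $F$. The only cosmetic difference is that the paper explicitly shrinks $\ep$ so that $B_\ep(i\alp)\subset\inte(\M)$, which you cover implicitly when justifying $B_{\ep,I}(\nu)\subset\A$.
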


\begin{proof} 
Let $\ep_0>0$ be the constant in Proposition \ref{ball2} for $\mu=i\alp \in\inte(\M)$. 
Let us take $0<\ep<\ep_0$ such that $B_\ep(i\alp) \subset \inte(\M)$. 
By Theorem \ref{BromLem}, one see that  there exists  $I>0$ such that $B_{\ep,I}(i\alp) \subset \A$. 
Then by Proposition \ref{ball2}, 
if we choose $I>0$ sufficiently large,  we have 
$B_{\ep/2,2I}(i \alpha) \subset \D$. 
Since $\lam(\beta_n) \to 0$ horocyclically, $\im(4\pi i/\lam(\beta_n))>2I$ for all large $n$. 
Thus, for every $\alp' \in B_{\ep/2}(\alp)$,  we have $(i\alp', 4\pi i/\lam(\beta_n)) \in \D$, 
or $(\alp', \beta_n) \in \D_\tr$. 
Therefore we obtain 
$B_{\ep/2}(\alp) \subset  \inte(\LL(\beta_n))$ for all large $n$. 
\end{proof}

\begin{proof}[Proof of Theorem \ref{horo}]
The proof is almost parallel to that of Theorem \ref{tan}. 
We need to show the following four conditions: 
\begin{description}
\item[(H1)] 
For any $\alpha \in i  \M$ there exists $\alpha_n \in \LL(\beta_n)$ such that 
$\alpha_n \to \alpha$. 
\item[(H2)] If $\alpha_{n_j} \in \LL(\beta_{n_j})$ and $\alpha_{n_j} \to \alpha$ then 
$\alpha \in i\M$. 
\item[(C1)] For any compact subset 
$X$ in $\inte(i \M)$, $X \subset \inte(\LL(\beta_n))$ for all large $n$. 
\item[(C2)] 
If there exist an open subset $O \subset \C$ 
and a infinite sequence $\{n_j\}_{j=1}^\fty$ such that 
$O \subset \inte(\LL(\beta_{n_j}))$ then 
$O \subset \inte(i\M)$. 
\end{description}

Proof of  (H1):  
For any $\alp \in i\M$,  there exists a sequence $\alp(j) \in \inte(i\M)$ 
such that $\alp(j) \to \alp \ (j \to \fty)$. 
It follows from Lemma \ref{horolem} that for each $j$ we have $\alp(j) \in \LL(\beta_n)$ for all large $n$. 
Thus we obtain the claim. 

Proof of  (C1): 
Let $X \subset \inte(i \M)$ be a compact subset. 
For each $\alp \in X$, it follows from Lemma \ref{horolem} that there exist $\ep(\alp)>0$ and 
$N(\alp)>0$ such that $B_{\ep(\alp)}(\alp) \subset \inte(\LL(\beta_n))$ 
for all $n \ge N(\alp)$. Since 
$\bigcup_{\alp \in X} B_{\ep(\alp)}(\alp)$ is an open covering of $X$, 
we may choose a finite set of points $\{\alp_j\} \subset  X$ such that 
$\bigcup_{j} B_{\ep(\alp_j)}(\alp_j)$ is also an open covering.  
Since $B_{\ep(\alp_j)}(\alp_j) \subset \inte(\LL(\beta_n))$ for all 
$n \ge N:=\max_j{N(\alp_j)}$, we have 
$X \subset \inte(\LL(\beta_n))$ for all $n \ge N$.

Proof of  (H2): 
For simplicity we denote $\{n_j\}$ by $\{n\}$,  and assume that 
$\alpha_{n} \in \LL(\beta_{n})$ converge to $\alpha$. 
Take $\rho_n \in AH(N,P) \cap \Omega$ such that $\Tr(\rho_n)=(\alp_n,\beta_n)$. 
Since $\alp_n \to \alp$, $\beta_n \to 2$,  
the sequence $\{\rho_n\}_{n=1}^\fty$ 
converges to  $\sigma_{i\alp} \in R(N,P)$,  and 
since $AH(N,P)$ is closed, we have $\sigma_{i\alp} \in AH(N,P)$.  
Therefore we obtain $i\alp \in \M$ and hence $\alp \in i\M$.

Proof of  (C2): 
By the same argument as in (H2), we have $\alp \in i\M$ for every $\alp \in O$. 
Therefore  $O \subset  i\M$. 
Since $O$ is open, we have $O \subset  \inte(i\M)$. 
\end{proof}

\subsection{Non local connectivity}

Here we will show that there exists a linear slice which is not locally 
connected at their boundary (see Figure \ref{nonloc}). 
This is a direct consequence of Bromberg's argument in \cite{Br} 
showing that $AH(N,P)$ is not locally connected. 
This result is concerned with vertical slices of $\A$, 
whereas Theorems \ref{tan} and \ref{horo} are concerned with horizontal slices of $\A$. 

\begin{figure}
\begin{center}
\includegraphics[width=4.5cm, bb=0 0 444 450]{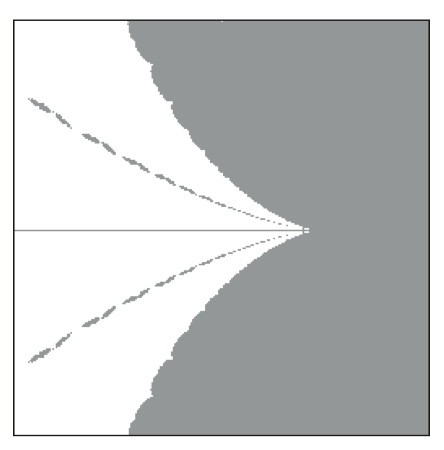}
\caption{The linear slice $\LL(\alp)$ (gray part) for $\alp=5.9+0i$; 
 restricted to a neighborhood of $2$ whose width is about $0.2$. 
 (The horizontal line thorough $2$ is the locus where the computer can not detect non-discreteness.) }
\label{nonloc}
\end{center}
\end{figure}

\begin{thm}
There exists $\alp\in \C$ such that $\LL(\alp)$ 
is not locally connected at $2 \in \bd \LL(\alp)$; 
that is,   $U \cap \LL(\alp)$ is disconnected for any sufficiently small 
neighborhood $U \subset \C$ of $2$. 
\end{thm}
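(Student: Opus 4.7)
The plan is to transport Bromberg's non-local-connectedness of $AH(N,P)$ through the trace coordinates to a vertical slice of $\D_\tr$. Using the symmetry $(\alp,\beta)\mapsto(\beta,\alp)$ of $\D_\tr$ recorded in Section 3, the slice $\LL(\alp)$ coincides with $\{\beta:(\alp,\beta)\in\D_\tr\}$. Restricting attention to $\beta$ near $2$ and applying the homeomorphism $F(\alp,\beta)=(i\alp,4\pi i/\lam(\beta))$ from Section 5.3, the germ of $\LL(\alp)$ at $\beta=2$ is carried onto the germ at $\ze=\fty$ of $\D(i\alp):=\{\ze\in\hC:(i\alp,\ze)\in\D\}$. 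Fixing $\nu\in\inte(\M)$ and setting $\alp=-i\nu$, the homeomorphism $\vphi=F\circ\Tr\circ\Phi$ of Section 5.3 shows that $\D(\nu)$ and the vertical slice $\A(\nu):=\{\ze\in\hC:(\nu,\ze)\in\A\}$ are locally homeomorphic at $\fty$, via a map that fixes $\fty$. Since $\nu\in\M$ one has $\fty\in\A(\nu)$, while real $\ze$ are excluded from $\A(\nu)$; hence $\fty\in\bd\A(\nu)$ and therefore $2\in\bd\LL(\alp)$.

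The heart of the argument is to produce a specific $\nu\in\inte(\M)$ for which $\A(\nu)$ fails to be locally connected at $\fty$, by invoking Bromberg's construction from \cite{Br}. Bromberg's non-local-connectedness of $AH(N,P)$ is established by exhibiting, near a suitable $\sigma_\nu\in MP(N,P')$, a sequence of small components of $AH(N,P)$ accumulating on $\sigma_\nu$; via the local model of Theorem \ref{BromThm}, this corresponds to a sequence of disjoint components of $\B^+$ accumulating on $(\nu,\fty)$ inside $\A$. The combinatorial engine is the cusped behaviour of $\bd\M$ at rational points: the discrete-faithfulness condition $\nu-k\ze\in\M$ for all $k\in\Z$ from Theorem \ref{BromLem} fails intermittently as $\ze\to\fty$ because various orbit points $\nu-k\ze$ pass near cusps of $\bd\M$, producing disjoint admissible bands of $\ze$ that shrink toward $\fty$. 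Once one verifies that these bands already intersect the single vertical line $\mu=\nu$ in disjoint arcs converging to $\fty$, the non-local-connectedness of $\A(\nu)$ at $\fty$ follows immediately.

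Combining the two steps and transporting back via $\vphi^{-1}$ and $F^{-1}$ yields the desired conclusion that $\LL(\alp)$ is not locally connected at $2$ for $\alp=-i\nu$. The main obstacle is the second step: extracting from Bromberg's argument the statement that his disconnected pieces appear already in a single vertical slice $\mu=\nu$, rather than only after varying $\mu$. Since the paper advertises this as a direct consequence of \cite{Br}, the task reduces to inspecting the explicit form of Bromberg's accumulating components of $\B^+$ near $(\nu,\fty)$ and verifying that slicing them by $\{\mu=\nu\}$ preserves their separation, for a suitably chosen $\nu\in\inte(\M)$ adapted to a cusp of $\bd\M$.
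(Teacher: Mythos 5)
Your overall strategy — transport Bromberg's non-local-connectedness through $F$ and the homeomorphism of Theorem \ref{BromThm} to a vertical slice of $\D$ — is the same as the paper's, but there is a genuine error in the middle step, and the gap you identify at the end is not actually the hard part.

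The error: you assert that $\vphi = F\circ\Tr\circ\Phi$ ``shows that $\D(\nu)$ and the vertical slice $\A(\nu)$ are locally homeomorphic at $\fty$.'' This is false. The map $\vphi$ is a homeomorphism between \emph{two-dimensional} open sets $U\subset\A$ and $W\subset\D$, but it does not preserve the foliation by vertical lines $\{\mu=\mathrm{const}\}$: for $(\mu,\ze)\in U$ with $\ze\ne\fty$, the first coordinate of $\vphi(\mu,\ze)$ is $i\alp$ where $\alp = $ the first trace coordinate of $\sigma_{\mu,\ze}$, and Theorem \ref{estimate} only gives $|\mu - i\alp|\le\ep$, not equality. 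So $\vphi$ does not carry $\A(\nu)\cap U$ onto $\D(\nu)\cap W$, and no local homeomorphism between the two vertical slices is produced. This is exactly where the paper does something more careful: using Proposition \ref{ball1} (whose proof in fact gives $B_{\ep/2}(\nu,z)\subset\vphi(B_\ep(\nu,z))$, not merely $B_{\ep/2}(\nu,z)\subset\D$), one shrinks Bromberg's two-dimensional balls $B_\ep(\mu,\ze+2n)\subset\A$ to balls $B_{\ep/2}(\mu,\ze+2n)$ that land inside $W\subset\D$ and remain in distinct components of $W$. Only then does one intersect each two-dimensional ball with the vertical plane $\{\mu\}\times\C$ to obtain one-dimensional disks in distinct components of $\D(\mu)$ near $\fty$. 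Without the quantitative ball-inclusion coming from Theorem \ref{estimate}, there is no way to move the disconnection from $\A$ to $\D$.

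On the concern you flag at the end (``whether Bromberg's disconnected pieces appear already in a single vertical slice $\mu=\nu$''): this is not the real obstruction. The statement the paper extracts from the proof of Theorem 4.15 in \cite{Br} already gives balls $B_\ep(\mu,\ze+2n)$ lying in \emph{distinct} components of the slab $\{(\nu',z)\in\A : |\nu'-\mu|<2\ep\}$; restricting to the slice $\{\nu'=\mu\}$ of that slab immediately gives disconnection of $\A(\mu)$ near $\fty$, with no extra work. Your extended speculation about the cusps of $\bd\M$ is not needed — the paper simply cites Bromberg's construction. The step that does need work, and that your proposal skips, is the transfer from $\A$ to $\D$ via Proposition \ref{ball1}.
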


\begin{proof}
The homeomorphism $F:\D_\tr^+ \to \D$ defined in section 5.3 
induces a homeomorphism from 
\begin{eqnarray*}
\LL(\alp) \cap \C_+=\{\beta \in \C : (\alp,\beta) \in \D_\tr^+\} 
\end{eqnarray*}
to a slice  
$$
\{\ze \in \hC : (i \alp, \ze) \in \D\}
$$
of $\D$. 
Since $F(\alp,2)=(i\alp,\fty)$, to show that 
$\LL(\alp)$ is not locally connected at $2$ for some $\alp$, 
it suffices  to show that the set 
$\{\ze \in \hC : (i \alp, \ze) \in \D\}$ is not locally connected at $\ze=\fty$. 
We will show this by using the fact observed in \cite{Br} that the vertical slice 
$\{\ze \in \hC : (i \alp, \ze) \in \A\}$ of $\A$ is not locally connected at $\ze=\fty$ for some $\alp$. 

From the argument of Bromberg in the proof of Theorem 4.15 in \cite{Br}, 
there exist $\mu \in \inte(\M)$, $\ze \in \C$ with $\im\,\ze>0$, and $\ep>0$ such that 
$B_\ep(\mu,\ze+2n)$ are contained in different connected components of 
$$
\{(\nu,z) \in \A : |\nu-\mu|<2\ep\}
$$ for every integer $n$. 
By Theorem \ref{BromThm}, we can take neighborhoods $U$, $W$ of $(\mu,\fty)$ in $\A$, $\D$, respectively,  
such that $\varphi=F \circ \Tr \circ \Phi:U \to W$ is a homeomorphism. 
We may assume that 
$U$ is of the form 
$$
U=\A \cap \{(\nu,z) \in \C^2 : |\nu-\mu|<2\ep, |z|>K\}.
$$
Then for all large $n$, 
$B_\ep(\mu,\ze+2n)$ are contained in $U$, and thus 
contained in distinct connected components of $U$.

By choosing $\ep>0$ sufficiently small and  $K>0$ sufficiently large, 
we see from Proposition \ref{ball1}  that 
$B_{\ep/2}(\mu,\ze+2n) \subset \D$ for all large $n$. 
Therefore, $B_{\ep/2}(\mu,\ze+2n)$ are contained in distinct connected components of $W$ for all large $n$.  
Since $W \subset \D$ is a neighborhood of $(\mu,\fty)$, 
we see  that  the set $\{\ze \in \hC : (\mu, \ze) \in \D\}$ 
 is not locally connected at $\ze=\fty$.  
 Letting $\alp=-i\mu$, we obtain the result.   
\end{proof}

\section{Complex Fenchel-Nielsen coordinates}

In this section, we restate Theorems \ref{tan} and \ref{horo} in terms of 
the complex Fenchel-Nielsen coordinates. 
We begin with recalling  the definition of the real Fenchel-Nielsen coordinates for 
Fuchsian representations. 

Given $\lam>0$, we define a representation $\eta_\lam \in R(N,P)$  by 
$$
\eta_\lam(a):=\frac{1}{\sinh (\lam/2)}\left(\begin{array}{cc}
\cosh(\lam/2)  & -1 \\ 
-1 & \cosh(\lam/2)
 \end{array}\right), \quad 
\eta_\lam(b):=\left(\begin{array}{cc}
e^{\lam/2}  & 0 \\ 
0 & e^{-\lam/2}
 \end{array}\right). 
$$
Then $\eta_{\lam}(\pi_1(N))$ acts properly discontinuously on the upper-half plane $\HH^2$, 
and hence is a Fuchsian group (see Figure \ref{domain}, left). 
Note that $\eta_\lam(a)$ fixes $-1, 1$,  $\eta_\lam(b)$ fixes $0, \fty$,  
and thus the axes of $\eta_\lam(a)$ and $\eta_\lam(b)$ are perpendicular to each other.   
In addition, the complex length of $\eta_\lam(b)$ is equal to $\lam \in \R$.

 Now we add a twisting parameter $\tau$. 
Given $(\lam,\tau) \in \R_+ \times \R$, 
we define a fuchsian representation 
$\eta_{\lam,\tau} \in R(N,P)$
by 
$$
\eta_{\lam,\tau}(a):=\left(\begin{array}{cc}
e^{\tau/2}  & 0 \\ 
0 & e^{-\tau/2}
 \end{array}\right)\eta_{\lam}(a), \quad 
 \eta_{\lam,\tau}(b)
:=\eta_{\lam}(b). 
$$
Note that the quotient surface $\HH^2/\eta_{\lam,\tau}(\pi_1(N))$ is obtained by 
cutting the surface $\HH^2/\eta_\lam(\pi_1(N))$ along the geodesic representative of $\eta_\lam(b)$, 
twisting by hyperbolic length $\tau$ and re-glueing (see Figure \ref{domain}, right). 

\begin{figure}
\begin{center}
\includegraphics[height=4cm, bb=0 0 1304 394]{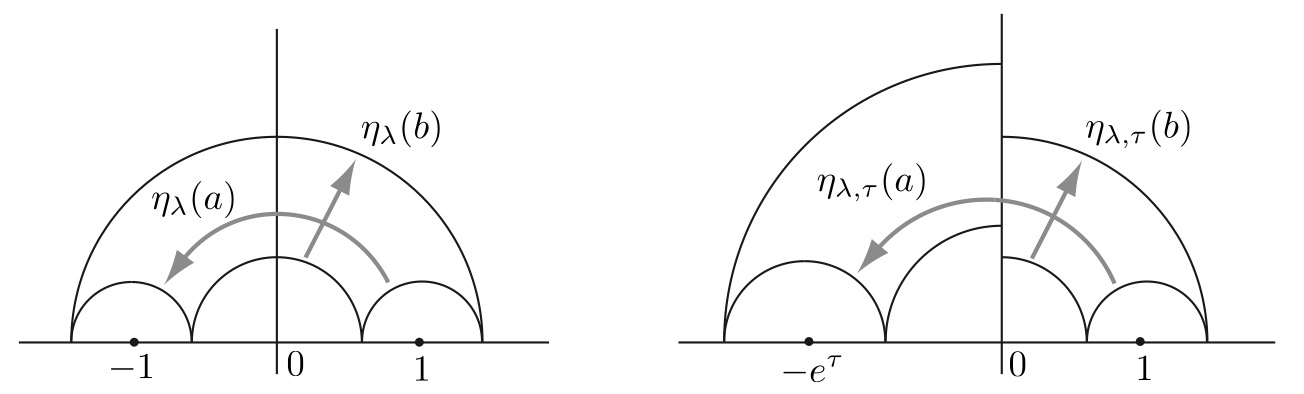}
\caption{Fundamental domains of images of  $\eta_\lam$ (left) and $\eta_{\lam,\tau}$ (right). }
\label{domain}
\end{center}
\end{figure}

Now we obtain a map 
$$
FN:\R_+ \times \R \to  R(N,P) 
$$
defined by $(\lam,\tau) \mapsto \eta_{\lam,\tau}$. 
It is well-known that this map is a homeomorphism onto the space of Fuchsian representations. 
By allowing the parameters $\lam,\tau$ to be complex numbers, 
we obtain a map
$$
FN:(\C \sm 2\pi i \Z) \times \C \to R(N,P). 
$$
We say that $(\lam,\tau)$ is the {\it complex Fenchel-Nielsen coordinates} of the representation 
$\eta_{\lam,\tau}$.  
Note that if $\lam \in \R$ and $\tau \in \C$, $\eta_{\lam,\tau}$ is the complex earthquake of $\eta_\lam$,  
see \cite{Mc2}. 
It is known by  Kourouniotis \cite{Ko}  and Tan \cite{Ta} 
that there is an open subset of  $(\C \sm 2\pi i \Z) \times \C$ containing 
$\R_+ \times \R$ such that the map $FN$ induces a homeomorphism 
from this set onto the quasifuchsian space $MP(N,P)$. 

Let 
$$
\D_{FN}:=\{(\lam,\tau) \in (\C \sm 2\pi i \Z) \times \C : \eta_{\lam,\tau} \in AH(N,P)\}. 
$$
Since we have 
\begin{eqnarray*}
\tr^2 \eta_{\lam,\tau}(a)
&=& 4 \coth^2 \left(\frac{\lam}{2}\right)\cosh^2\left(\frac{\tau}{2}\right), \\
\tr^2 \eta_{\lam,\tau}(b)&=&4 \cosh^2 \left(\frac{\lam}{2}\right), 
\end{eqnarray*}
the map $\Theta: (\C \sm 2\pi i \Z) \times \C \to \C^2$ defined by 
$$
\Theta(\lam,\tau):=\left(
2\coth \left( \frac{\lam}{2} \right)  \cosh \left( \frac{\tau}{2} \right),  
2 \cosh \left( \frac{\lam}{2} \right) 
\right)
$$ 
takes $\D_{FN}$ onto $\D_\tr$. 
For a given $\lam \in \C \sm 2\pi i \Z$, let 
$$
\wt \LL(\lam):=\{\tau \in \C: \eta_{\lam,\tau} \in AH(N,P)\}.  
$$
We define a map $f_\lam:\C \to \C$ by 
$$
f_\lam(z):=2 \coth\left(\frac{\lam}{2}\right)\cosh\left(\frac{z}{2}\right) 
$$
so that we have  $\Theta(\lam,\tau)=(f_\lam(\tau), 2\cosh(\lam/2))$. 
Then the map $f_\lam$ takes 
$\wt \LL(\lam)$ onto  $\LL(\beta)$ where $\beta=2\cosh (\lam/2)$. 
Note that $\wt \LL(\lam)$ is $\la z+\lam, z+2 \pi i \ra$-invariant,  where the translation 
$z \mapsto z+\lam$ corresponds to the Dehn twist about $b$. 

We want to understand the shape of $\wt \LL(\lam)$ by using the Maskit slice $\M$ 
 when $\lam$  lies in  $\C_+$ and is close to zero. 
 To this end,  we normalize $\wt \LL(\lam)$ so that 
 the action of the Dehn twist  about $b$ corresponds to the translation $z \mapsto z+2$.  
 (Recall that the Maskit slice $\M$ has this property.)  
 Let us define a map  $g_\lam:\C \to \C$ by
$$
g_\lam(z):=\frac{2}{\lam}(z-\pi i) 
$$ 
 and set 
 $$
\wh \LL(\lam):=g_\lam (\wt \LL(\lam))
$$
Then $\wh \LL(\lam)$ is $\la z+2, z+4 \pi i/\lam \ra$-invariant and the map 
$$
h_\lam(z):=f_\lam \circ g_\lam^{-1}(z)
$$
takes zero to zero and $\wh \LL(\lam)$ onto $\LL(\beta)$, where $\beta=2\cosh(\lam/2)$. 

Since 
\begin{eqnarray*}
h_\lam(z)
&=&2\coth\left(\frac{\lam}{2}\right)\cosh\left(\frac{\lam z}{4}+\frac{\pi i}{2}\right) \\
&=&2i \coth\left(\frac{\lam}{2}\right)\sinh\left(\frac{\lam z}{4}\right), 
\end{eqnarray*}
 one can see that  if $\lam_n \to 0$ as $n \to \fty$, 
then $h_{\lam_n}(z) \to iz$ uniformly on any compact subset of $\C$.   
Thus we obtain the following corollary of Theorems \ref{tan} and \ref{horo} (see Figure \ref{tau_conv}):

\begin{cor}
Suppose that  $\lam_n \in \C_+, \lam_n \to 0$ as $n \to \fty$. 
\begin{enumerate}
\item If $\lam_n \to 0$ horocyclically, then 
$\wh \LL(\lam_n)$ converge to $\M$ 
in the sense of Hausdorff, 
and 
$\inte(\wh \LL(\lam_n))$ converge to $\inte(\M)$ in the sense of Carath\'{e}odory. 

\item Suppose that $\lam_n \to 0$ tangentially.  
In addition we assume that 
there exist a sequence of integers 
$\{m_n\}_{n=1}^\fty$ such that the sequence 
$2\pi i/\lam_n-m_n$ converges to some $\xi \in \C$ as $n \to \fty$. Then  
$\wh \LL(\lam_n)$ converge to $\M({2\xi})$ 
in the sense of Hausdorff, 
and 
$\inte(\wh \LL(\lam_n))$ converge to $\inte(\M({2\xi}))$ in the sense of Carath\'{e}odory. 
\end{enumerate}
\end{cor}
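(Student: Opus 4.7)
The plan is to transport the convergence results for $\LL(\beta_n)$ established in Theorems \ref{tan} and \ref{horo} to $\wh \LL(\lam_n)$ via the explicit change of variables $\wh \LL(\lam_n) = h_{\lam_n}^{-1}(\LL(\beta_n))$, where $\beta_n := 2\cosh(\lam_n/2)$. The key analytic fact, already recorded in the excerpt, is that $h_{\lam_n}(z) = 2i\coth(\lam_n/2)\sinh(\lam_n z/4)$ tends to $iz$ uniformly on compact subsets of $\C$. Since $h_\lam'(z) = i(\lam/2)\coth(\lam/2)\cosh(\lam z/4)$ vanishes only at $z = (2k+1)\cdot 2\pi i/\lam$, all critical points of $h_\lam$ escape every fixed compact set as $\lam \to 0$. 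Consequently, on any prescribed compact $K \subset \C$ and for $n$ large, $h_{\lam_n}$ restricts to a biholomorphism of a neighborhood of $K$ onto its image, and any local inverse branch converges uniformly to $z \mapsto -iz$ on compact subsets where it is defined.

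In the tangential case (2) I first verify the missing hypothesis $|m_n| \to \fty$ needed for Theorem \ref{tan}: tangentiality gives $\im(2\pi i/\lam_n) = 2\pi \re(\lam_n)/|\lam_n|^2$ uniformly bounded, while $|2\pi i/\lam_n| \to \fty$, so $|\re(2\pi i/\lam_n)| \to \fty$, forcing $|m_n| \to \fty$; positivity of this imaginary part also yields $\im \xi \ge 0$. The horocyclic hypothesis of Theorem \ref{horo} is directly inherited in case (1). Writing $K$ for the target set ($\M$ in case (1), $\M(2\xi)$ in case (2)), these theorems provide $\LL(\beta_n) \to iK$ in the Hausdorff sense and $\inte(\LL(\beta_n)) \to \inte(iK)$ in the Carath\'eodory sense.

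I then verify the four conditions for the convergences $\wh \LL(\lam_n) \to K$ and $\inte(\wh \LL(\lam_n)) \to \inte(K)$. For (H1), given $x \in K$ choose $y_n \in \LL(\beta_n)$ with $y_n \to ix$; since $h_{\lam_n}(x) \to ix$ and $h_{\lam_n}$ is locally invertible near $x$ with inverse close to $-i\cdot$, solve $h_{\lam_n}(x_n) = y_n$ to produce $x_n \in \wh \LL(\lam_n)$ with $x_n \to x$. For (H2), if $x_{n_j} \in \wh \LL(\lam_{n_j})$ converges to $x$, then $h_{\lam_{n_j}}(x_{n_j}) \in \LL(\beta_{n_j})$ converges to $ix$ by uniform convergence, so (H2) for the linear slices gives $ix \in iK$. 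For (C1), given compact $X \subset \inte(K)$ choose a compact neighborhood $Y$ of $iX$ with $Y \subset \inte(iK)$; for $n$ large, $h_{\lam_n}(X) \subset Y \subset \inte(\LL(\beta_n))$, and the identity $\inte(h_{\lam_n}^{-1}(A)) = h_{\lam_n}^{-1}(\inte A)$, valid since $h_{\lam_n}$ is an open local biholomorphism on a fixed compact neighborhood of $X$ for large $n$, yields $X \subset \inte(\wh \LL(\lam_n))$. For (C2), given open $O \subset \inte(\wh \LL(\lam_{n_j}))$ and $B_\ep(x) \subset O$, the image $h_{\lam_{n_j}}(B_\ep(x))$ is open and, for $j$ large, contains $B_{\ep/2}(ix)$ (a standard consequence of the uniform convergence of $h_{\lam_{n_j}}|_{B_\ep(x)}$ to $iz$); thus $B_{\ep/2}(ix) \subset \inte(\LL(\beta_{n_j}))$ for large $j$, and (C2) for the linear slices gives $B_{\ep/2}(ix) \subset \inte(iK)$, whence $x \in \inte(K)$.

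The only delicate point is the identification of interiors used in (C1) and (C2), which rests on $h_{\lam_n}$ being an open local biholomorphism on fixed compacts for large $n$; all remaining steps follow directly from the uniform convergence $h_{\lam_n} \to i\cdot$ combined with the already-established convergences of the linear slices.
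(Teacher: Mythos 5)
Your proposal is correct and follows essentially the same route as the paper: transport the established convergences of $\LL(\beta_n)$ through the maps $h_{\lam_n}$, using their uniform convergence to $z\mapsto iz$ on compacts and their local biholomorphicity for $n$ large. The paper's own proof is just the one-line assertion that Hausdorff convergence is ``easily seen'' and Carath\'eodory convergence follows via Hausdorff convergence of complements, so your write-up supplies the details (including the needed check that tangentiality forces $|m_n|\to\fty$ and $\im\,\xi\ge 0$, so Theorem~\ref{tan} applies) that the paper leaves implicit.
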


\begin{proof}
The statement for Hausdorff convergence can be easily seen. 
The statement for Carath\'{e}odory  convergence follows form 
Hausdorff convergence of the complements. 
\end{proof}

\begin{figure}
\begin{center}
\includegraphics[height=15cm, bb=0 0 651 1018]{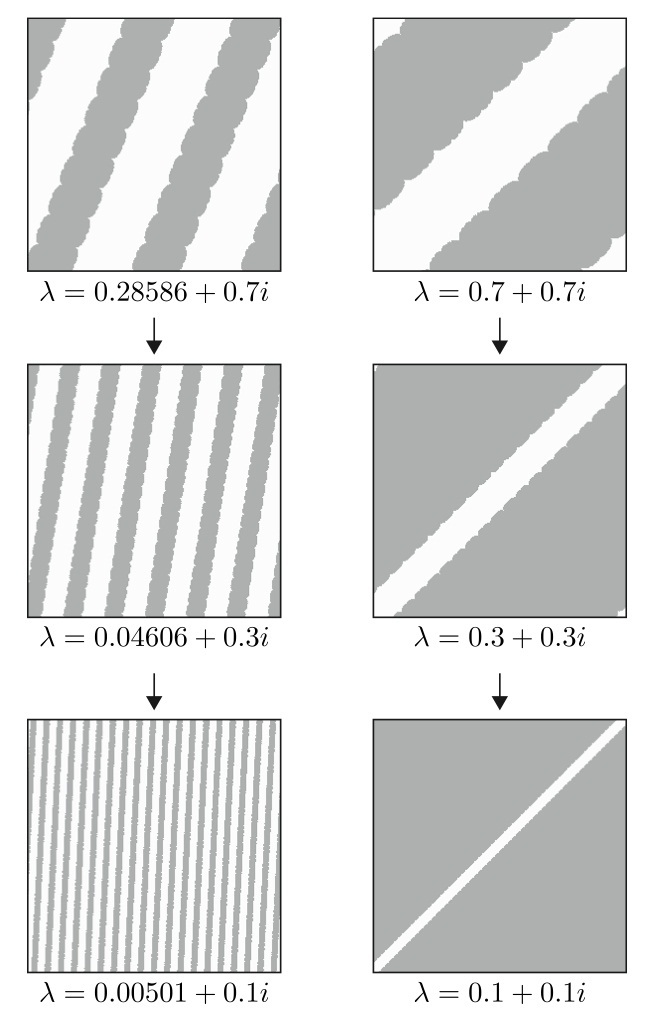}
\caption{Computer-generated figure of  $\wt \LL(\lam)$
 (gray parts) for $\lam$ close to $0$, restricted to the square 
of width  $2\pi$ centered at $\pi i$.  
The left column corresponds to tangential convergence $\lam \to0$, where 
$\lam$ are points on  the circle $|z-1|=1$ 
whose imaginary part equal $0.7$ (top),  $0.3$ (middle) and $0.1$ (bottom). 
The right column corresponds to horocyclic convergence $\lam \to 0$, 
where $\lam$ equal $0.7+0.7i$ (top),  $0.3+0.3i$ (middle) and $0.1+0.1i$ (bottom).  }
\label{tau_conv}
\end{center}
\end{figure}

\end{document}